\documentclass[preprint]{article}
\usepackage{amssymb,comment,mathtools}
\usepackage{amsfonts}
\usepackage{mathrsfs}
\usepackage{amssymb,amsfonts,amsmath}
\usepackage{graphicx,graphics,epstopdf}
\usepackage{url,bigints}
\graphicspath{{./figures_cv/}}
\usepackage{latexsym,float,tikz,ulem,cancel,algorithmic,algorithm}
\usetikzlibrary{arrows.meta}
\let\oldnl\nl% Store \nl in \oldnl
\newcommand{\nonl}{\renewcommand{\nl}{\let\nl\oldnl}}% Remove line number for one line
\usepackage{adjustbox}
\usepackage{esint}
\usepackage{multirow}
\usepackage{ctable}

\usepackage{subfigure}
\usepackage{times}
\usepackage{multicol,enumerate}
\usepackage{epstopdf}
\usepackage{color}
\usepackage{fullpage}
\usepackage{setspace}
\usepackage[english]{babel}
\usepackage[version=3]{mhchem}
\usepackage{amsmath, amssymb, amsthm}
\usepackage{verbatim, latexsym}
\usepackage{colortbl}
\usepackage{multirow}
\usepackage[show]{ed}
\usepackage{hyperref}
\usepackage[OT2,OT1]{fontenc}
\newcommand\cyr
{
\renewcommand\rmdefault{wncyr}
\renewcommand\sfdefault{wncyss}
\renewcommand\encodingdefault{OT2}
\normalfont
\selectfont
}
\DeclareTextFontCommand{\textcyr}{\cyr}

\def\XXint#1#2#3{{\setbox0=\hbox{$#1{#2#3}{\int}$ }
\vcenter{\hbox{$#2#3$ }}\kern-.6\wd0}}

\newcommand{\op}{\operatorname}
\newtheorem{theorem}{Theorem}[section]
\newtheorem{corollary}{Corollary}[section]

\newtheorem{remark}{Remark}[section]
\newtheorem{lemma}{Lemma}[section]

\numberwithin{equation}{section}
\usepackage{soul,ulem}
\setstcolor{red}
\allowdisplaybreaks

\title{Robust Edge Multiscale Space based Hybrid Schwarz Preconditioner for High-Frequency Helmholtz Problems}
\author{Shubin Fu\thanks{Eastern Institute for Advanced Study, Eastern Institute of Technology, Ningbo, Zhejiang 315200, P.R. China (\texttt{shubinfu@eias.ac.cn})} \and Shihua Gong\thanks{School of Science and Engineering, The Chinese University of Hong Kong, Shenzhen, Guangdong 518172, P.R. China ({\tt{gongshihua@cuhk.edu.cn}})} \and Guanglian Li\thanks{Department of Mathematics, The University of Hong Kong, Pok Fu Lam Road, Hong Kong SAR, P.R. China ({\tt{lotusli@maths.hku.hk}}, {\tt ylliao@hku.hk})} \and Yulei Liao\footnotemark[3]}
\begin{document}
\maketitle

\begin{abstract}
We develop a novel hybrid Schwarz method, termed the robust edge multiscale space based hybrid Schwarz method (REMs-HS), for the efficient solution of high-frequency Helmholtz problems. The method combines a one-level restricted additive Schwarz preconditioner with impedance transmission conditions (RAS-imp) and an edge multiscale (EMs) coarse solver in a multiplicative way. A key ingredient of our construction is a modified impedance boundary condition (mIBC), which is imposed both on the subdomain problems and on the local problems that generate the EMs coarse space. Compared with the original impedance boundary condition, the mIBC makes the proposed method both stable and inexpensive theoretically. We prove that the resulting REMs-HS preconditioner gives rise to a uniform convergent fixed-point iteration for large wavenumbers. Moreover, the required resolution conditions are substantially relaxed. Only a mild condition on the level parameter $\ell$ of the hierarchical coarse basis $\ell>\frac{1}{2}\log_2 k$ is needed, which is markedly weaker than the conditions in previous analyses. Extensive numerical experiments demonstrate the robustness of the proposed method for Helmholtz problems with large wavenumbers. 
 \end{abstract}
 \noindent \textbf{AMS subject classifications:} 65F08, 65F10, 65N55
 
\noindent \textbf{Key words:} Helmholtz equation, large wavenumber, hybrid Schwarz preconditioner, multiscale ansatz space

\section{Introduction}
We consider the interior impedance problem
\begin{equation}\label{hel_cont}
\left\{\begin{aligned}
-\Delta u-k^2u&=f &&\text{in }\Omega,\\
\partial_nu+\mathrm{i}ku&=g &&\text{on }\partial\Omega,
\end{aligned}\right.
\end{equation}
where $\Omega\subset\mathbb R^d$, $d=2,3$, is a bounded convex polygonal or
polyhedral domain, $n$ is the outward normal, and $k\ge k_0>0$ is the
wavenumber. Its finite element discretization yields a large, indefinite linear system. For fixed polynomial degree, the mesh must resolve the oscillations and control the pollution error, so a fixed number of points per wavelength is generally insufficient to maintain accuracy as $k$ grows \cite{MR2812565,wu2014pre,du2015preasymptotic,GalkowskiSpence2025Preasymptotic}. The resulting growth in the number of unknowns, together with indefiniteness, motivates efficient preconditioned iterative solvers.

Several one-level domain decomposition preconditioners have been established to address these difficulties. Shifted-Laplacian methods introduce absorption to obtain an auxiliary problem that is easier to invert, while balancing this simplification against the quality of the resulting preconditioner \cite{erlangga2004class,erlangga2006novel}. Wavenumber-explicit field-of-values bounds and GMRES convergence estimates are derived for overlapping Schwarz preconditioners with local impedance conditions and absorption \cite{graham2020domain}, which are then extended to high-order finite element discretizations of heterogeneous Helmholtz problems \cite{gong2021domain}. For problems without added volume absorption, parallel overlapping Schwarz iterations with impedance transmission conditions through impedance-to-impedance maps and powers of the error operator have been proposed and analyzed in \cite{gong2022convergence}. Building on this work, the discrete RAS-imp analysis that relates finite element iterations to their continuous counterparts is established in \cite{gong2023convergence}. More recently, this one-level convergence theory has been extended to PML transmission conditions for nontrapping problems \cite{galkowski2024convergence}. Together, these contributions provide theoretical foundations for understanding how transmission conditions, discretization, and local wave propagation determine convergence under their respective assumptions.

These advances do not eliminate the need to account for global propagation as the decomposition is refined. A local correction may reduce the error within its subdomain while leaving components that require communication across many subdomains. This motivates the present extension: retaining stable local solvers while adding a coarse correction that communicates the remaining error globally. Classical two-level Schwarz theory makes this mechanism precise for positive-definite problems \cite{toselli2006domain}. Extensions to indefinite problems require suitable approximation and stability properties \cite{cai1992domain,xu1992new}. In the high-frequency setting, the size of the indefinite term makes the required coarse approximation depend explicitly on $k$. Even the invertibility of a Galerkin coarse problem needs justification. Thus, a small coarse space is useful only if its associated correction remains well defined and captures the components that local solves cannot efficiently remove.
Recent two-level analyses increasingly connect coarse-space approximation with Schwarz convergence. In particular, weighted hybrid methods based on a spectral coarse space composed of local Helmholtz-harmonic functions have been proposed, with GMRES convergence established through field-of-values estimates in the regimes considered \cite{MR4890903}. Localized orthogonal decomposition (LOD) has been used to construct coarse correctors, combined with local Dirichlet or impedance solves, in two hybrid preconditioners \cite{LuXuZhengZou2025}. Building on the Helmholtz approximation theory of \cite{MR4601686}, MS-GFEM-based two-level RAS methods have been proposed \cite{MaAlberScheichlZhang2025}, which relate Richardson and GMRES convergence to approximation errors controlled by oversampling and spectral enrichment. Moreover, \cite{GalkowskiSpence2026Hybrid} provides a general hybrid framework in which stable coarse approximation with small $L^2$-Galerkin error yields an operator close to the identity. 
Then a complementary route is established in \cite{GrahamSpence2026Polynomial} using piecewise-polynomial fine and coarse spaces with polynomial degrees growing like $\log k$ to obtain wavenumber-independent GMRES counts for an absorbing scattering formulation. 
Although the settings and operators differ across these works, they collectively identify coarse approximation as the mechanism linking local solvers to global convergence.

In this paper, we develop the robust edge multiscale space based hybrid Schwarz method (REMs-HS), using local Helmholtz problems equipped with the modified impedance boundary conditions (mIBCs).
 Edge multiscale spaces use hierarchical boundary data and local PDE extensions \cite{MR3980476,MR4860976,fu2021wavelet,FuGongLiWang:2024}. The modified impedance parameter of mIBCs equals $H^{-1}$ on artificial
subdomain boundaries and retains $k$ on the physical boundary with $H$ the coarse-mesh size. Then the corresponding local problems are coercive in the energy norm weighted by $H^{-1}$ in the regime $kH\ll1$ (Lemma \ref{lema:modified}). The same local problem is used in RAS-imp and extend continuous hierarchical boundary functions up to level $\ell$. Weighted prolongation assembles their extensions into a global coarse space. The construction uses prescribed boundary enrichment, without local spectral selection or additional volume-source basis functions due to this iIBC. We refer to Remark \ref{rmk:imp} for an explanation why this mIBC facillitates fewer multiscale basis functions. Stability of the global Galerkin coarse problem is established separately through an inf-sup argument (Lemma \ref{lem:infsup}). The coarse space targets the discrete remainder after one RAS correction which is a weighted sum of local Helmholtz-harmonic functions. Low-regularity boundary estimates (Lemma \ref{lema:vhL2} and Lemma \ref{lema:vhH1}) combining with a bound for the discrete impedance map (Lemma \ref{lema:ghH-12}) yield the required approximation properties (Lemma \ref{lema:last-app-harmonic}). Approximation of the continuous adjoint solution (Lemma \ref{lema:last}) then supplies the Schatz argument needed for the coarse projection. Theorem~\ref{thm:wavelet-basedconv} gives, in the discrete $H^1_k$-operator norm,
\begin{equation}\label{eq:intro-contraction}
\|I-B_{\mathrm{hybrid}}A\|
\lesssim \frac{2^{-\ell}H}{\delta}\sqrt{\frac{H}{h}},
\end{equation}
under certain conditions, where $h$ is the fine-mesh size and $\delta$ the minimum overlap. For a small $\varepsilon>0$, Corollary~\ref{coro:main} admits $h\simeq k^{-2-\varepsilon}$, $H\simeq\delta\simeq k^{-1-\varepsilon}$, and $\ell=\lceil(1/2+\varepsilon)\log_2k\rceil$, giving a contraction bound $\mathcal O(k^{-\varepsilon})$ for \eqref{eq:intro-contraction}. Thus the boundary hierarchy has an explicit logarithmic enrichment rule within a coupled choice of meshes and overlap. Compared with the previous theoretical result in \cite{FuGongLiWang:2024}, which required at least $\ell>2\log_2k$ levels, the present construction substantially relaxes the resolution requirements. Numerical experiments assess this scaling, the broader regimes $h=k^{-3/2}$ and fixed points per wavelength. Comparisons of unaccelerated iteration and FGMRES(20) identify the effect of the coarse space itself. We demonstrate three-dimensional tests with more than $135$ million unknowns involve only four or five preconditioner applications. These experiments provide evidence beyond the sufficient theoretical conditions, while the convergence theorem retains the fine-mesh and overlap requirements stated above.

The remainder of this paper is organized as follows. We present in Section~\ref{sec:prep} the finite element formulation of \eqref{hel_cont} and {\em a priori} estimate. Then we introduce in Section~\ref{sec:hybrid-2} the REMs-HS method, including the novel mIBC in Section~\ref{subsec:modified} and its coarse space in Section~\ref{sec:coarse-space}. Its convergence and numerical experiments are presented in Section~\ref{sec:converge} and Section~\ref{sec:numer}, respectively. Finally, we conclude our paper with Section \ref{sec:conclusion}. Throughout, spaces are complex valued, $(v,w)_D:=\int_Dv\overline w\,\mathrm dx$, and $\|v\|_{H^1_k(D)}^2:=\|\nabla v\|_{L^2(D)}^2+k^2\|v\|_{L^2(D)}^2$. The symbols $\Re$, $\Im$, and $\overline{\cdot}$ denote real part, imaginary part, and conjugation. We write $a\lesssim b$ with constants independent of the varying wavenumber and mesh parameters under the stated assumptions, $a\simeq b$ for two-sided bounds, and $a\ll b$ for a sufficiently small ratio.

\section{The Helmholtz equation and its finite element approximation}\label{sec:prep}
We summarize the basic properties of \eqref{hel_cont} in this section. To begin with, the variational formulation for \eqref{hel_cont} reads: Find $u\in H^1(\Omega)$ such that 
\begin{equation}
\label{var_form}
    a(u,v)=(f,v)_\Omega+(g,v)_{\partial\Omega}, \quad\forall v\in H^1(\Omega),
\end{equation}
where the sesquilinear form $a(\cdot,\cdot): H^1(\Omega)\times H^1(\Omega)\to \mathbb{C}$ is defined by
\[
a(v,w):=(\nabla v,\nabla w)_\Omega-k^2(v,w)_\Omega+\mathrm{i}k(v,w)_{\partial\Omega} \quad\forall v,w\in H^1(\Omega).
\]
The existence and uniqueness of the weak solution of \eqref{var_form} and several basic properties can be found in \cite[\S 8.1]{melenk1995generalized}, which are recapitulated in the following. 
\begin{lemma}\label{lema:sesquilinear}
The following properties hold.
\begin{enumerate}
\item The sesquilinear form $a(\cdot,\cdot)$ is bounded: 
    \begin{align}\label{eq:sesqui-bound}
    |a(v,w)|\lesssim\|v\|_{H^1_k(\Omega)}\|w\|_{H^1_k(\Omega)}, \quad\forall v,w\in H^1(\Omega).
    \end{align}
\item G\aa rding's inequality holds
\begin{equation}\label{eq:garding}
\Re a(v,v)+2k^2\|v\|_{L^2(\Omega)}^2\geq \|v\|_{H^1_k(\Omega)}^2,\quad\forall v\in H^1(\Omega).
\end{equation}
\item Assume that $f\in L^2(\Omega)$ and $g\in L^2(\partial\Omega)$. Let $u\in H^1(\Omega)$ be the solution of \eqref{hel_cont}, then
\begin{equation}\label{eq:uH1}
\|u\|_{H^1_k(\Omega)}\lesssim \|f\|_{L^2(\Omega)}+\|g\|_{L^2(\partial\Omega)}.
\end{equation}
Moreover, if $g\in H^{1/2}(\partial\Omega)$, then
\begin{equation}\label{eq:uH2}
\|u\|_{H^2(\Omega)}\lesssim k\Bigl(\|f\|_{L^2(\Omega)}+\|g\|_{L^2(\partial\Omega)}\Bigr)+\|g\|_{H^{1/2}(\partial\Omega)}.
\end{equation}
\end{enumerate}
\end{lemma}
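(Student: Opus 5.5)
Parts 1 and 2 are direct consequences of Cauchy--Schwarz and a trace inequality; Part 3 is the genuinely wavenumber-explicit statement, and it rests on the Rellich/Morawetz-multiplier technique available because $\Omega$ is convex, hence star-shaped.

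\textbf{Boundedness and G\aa rding.} For \eqref{eq:sesqui-bound}, the volume terms are bounded by Cauchy--Schwarz by $\|v\|_{H^1_k(\Omega)}\|w\|_{H^1_k(\Omega)}$. For the boundary term we use the multiplicative trace inequality $\|v\|_{L^2(\partial\Omega)}^2\lesssim \|v\|_{L^2(\Omega)}\|v\|_{H^1(\Omega)}$. Together with $k\ge k_0$, it gives $k\|v\|^2_{L^2(\partial\Omega)}\lesssim \|v\|_{H^1_k(\Omega)}^2$, hence $k|(v,w)_{\partial\Omega}|\lesssim \|v\|_{H^1_k}\|w\|_{H^1_k}$. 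For \eqref{eq:garding}, note that
\[
\Re a(v,v)=\|\nabla v\|^2-k^2\|v\|^2,
\]
since the boundary term is purely imaginary. Adding $2k^2\|v\|^2$ gives exactly $\|v\|_{H^1_k(\Omega)}^2$.

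\textbf{The $H^1_k$ bound \eqref{eq:uH1}.} The plan is to first prove it for smooth solutions and then extend by density, since $\Omega$ is convex and $H^2$ regularity holds. Test the equation with $v=u$ and take imaginary parts. This gives
\[
k\|u\|_{L^2(\partial\Omega)}^2\le \|f\|\|u\|+\|g\|_{L^2(\partial\Omega)}\|u\|_{L^2(\partial\Omega)}.
\]
Taking real parts gives $\|\nabla u\|^2=k^2\|u\|^2+\Re[(f,u)+(g,u)_{\partial\Omega}]$. It therefore remains to bound $k\|u\|_{L^2(\Omega)}$, which the energy identities alone cannot do.

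For this we use the Rellich identity with multiplier $x\cdot\nabla\bar u$, after translating so that $\Omega$ is star-shaped with respect to a ball centred at $0$, so that $x\cdot n\ge \gamma>0$ on $\partial\Omega$. Integration by parts yields
\[
(2-d)\|\nabla u\|^2+dk^2\|u\|^2 = 2\Re(f,x\cdot\nabla u)+\text{boundary terms}.
\]
The boundary terms involve $(x\cdot n)(|\nabla u|^2-k^2|u|^2)$ and $\Re(\partial_n u\, x\cdot\nabla\bar u)$. We split $\nabla u$ on $\partial\Omega$ into its normal and tangential parts and substitute $\partial_nu=g-\mathrm iku$. The positivity $x\cdot n\ge\gamma$ then makes the tangential gradient term have a good sign, and the remaining terms are controlled by $k^2\|u\|^2_{L^2(\partial\Omega)}+\|g\|^2_{L^2(\partial\Omega)}$. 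Combining this with the real-part identity multiplied by a suitable constant (the standard combination $d-1$ or $(d-2)/2$), and then with the imaginary-part bound, we absorb every term via Young's inequality. This gives $\|u\|_{H^1_k}\lesssim \|f\|+\|g\|_{L^2(\partial\Omega)}$, with constants depending only on $\Omega$.

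This multiplier bookkeeping is the main obstacle. Its key points are the sign of the tangential term and absorbing $k\|u\|_{L^2(\partial\Omega)}$ through the imaginary-part identity. This is exactly the argument in \cite[\S 8.1]{melenk1995generalized}, which we would follow.

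\textbf{The $H^2$ bound \eqref{eq:uH2}.} Regard $u$ as the solution of the Poisson--Neumann-type problem $-\Delta u+u=f+(k^2+1)u$ in $\Omega$, $\partial_nu=g-\mathrm iku$ on $\partial\Omega$. On a convex domain, $H^2$ regularity gives
\[
\|u\|_{H^2}\lesssim \|f\|+k^2\|u\|+\|g\|_{H^{1/2}(\partial\Omega)}+k\|u\|_{H^{1/2}(\partial\Omega)}.
\]
By the trace theorem, $\|u\|_{H^{1/2}(\partial\Omega)}\lesssim\|u\|_{H^1}$. By \eqref{eq:uH1}, both $k^2\|u\|$ and $k\|u\|_{H^1}$ are bounded by $k(\|f\|+\|g\|_{L^2(\partial\Omega)})$, and the claimed estimate follows.
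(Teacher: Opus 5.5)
Your proposal is correct and follows the same route as the paper. The paper gives no proof of its own and only recapitulates these facts from \cite[\S 8.1]{melenk1995generalized}, where \eqref{eq:uH1} is obtained by the Rellich multiplier $x\cdot\nabla\bar u$ on a star-shaped domain combined with the real- and imaginary-part energy identities, and your bootstrap for \eqref{eq:uH2} (a convex-domain Neumann problem with data $f+(k^2+1)u$ and $g-\mathrm{i}ku$) is the same device the paper itself uses for the local estimate \eqref{eq:uH2_modified} in Lemma \ref{lema:modified}.
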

Next, we introduce the finite element discretization for problem \eqref{hel_cont}. Let $\mathcal{T}_h$ be a quasi-uniform discretization of $\Omega$ with a mesh size $h$ and let $V_h$ be the $H^1$-conforming piecewise linear finite element space
\[
V_h:=\{v\in H^{1}(\Omega): v|_{T}\in \mathbb{P}_{1}(T),\;\forall T\in \mathcal{T}_h\}
\]
with $\mathbb{P}_1(T)$ the space of linear polynomials on the fine element $T\in \mathcal{T}_h$. Then the $H^1$-conforming Galerkin approximation to problem \eqref{var_form} reads as seeking $u_h\in V_h$ such that
\begin{equation}\label{eq:discrete-global}
    a(u_h,v_h)=(f,v_h)_\Omega+(g,v_h)_{\partial\Omega}, \quad\forall v_h\in V_h.
\end{equation}
The extension to higher-order finite elements is straightforward. Let $\Pi_h:H^1(\Omega)\to V_h$ be the standard nodal interpolation operator. Then there holds
\begin{equation}\label{approxi}
   \|v-\Pi_hv\|_{L^2(T)}+h\|\nabla (v- \Pi_hv)\|_{L^2(T)}\lesssim h^2\|v\|_{H^2(T)},\quad\forall T\in\mathcal{T}_h, v\in H^2(T).
\end{equation}
Using property \eqref{approxi}, we can establish the following quasi-optimality results of the discrete problem \eqref{eq:discrete-global} \cite[Proposition 8.2.7]{melenk1995generalized}. 
\begin{lemma}[Quasi-optimality]
\label{lema_1}
Under the mesh resolution condition that $k^2h\ll 1$, problem \eqref{eq:discrete-global} is well-posed and
\begin{equation}\label{eq:uerrH1}
\|u-u_{h}\|_{H^1_k(\Omega)}\lesssim\inf_{v_h\in V_h}\|u-v_h\|_{H^1_k(\Omega)} \lesssim h\|u\|_{H^2(\Omega)}.
\end{equation}
\end{lemma}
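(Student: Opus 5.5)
This is the classical Schatz-type duality argument. It combines Gårding's inequality \eqref{eq:garding} with an Aubin--Nitsche bound on the $L^2$ part of the error, where the adjoint problem is controlled by the a priori estimates of Lemma \ref{lema:sesquilinear}.

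\textbf{Step 1: quasi-optimality under the assumption that $u_h$ exists.} Set $e:=u-u_h$. Galerkin orthogonality gives $a(e,v_h)=0$ for all $v_h\in V_h$.

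To estimate $\|e\|_{L^2(\Omega)}$, introduce the adjoint problem: find $z\in H^1(\Omega)$ such that
\[
a(v,z)=(v,e)_\Omega \quad \forall v\in H^1(\Omega).
\]
Its strong form is $-\Delta z-k^2z=e$ in $\Omega$ with $\partial_n z-\mathrm{i}kz=0$ on $\partial\Omega$. This is the conjugate of an impedance problem of the form \eqref{hel_cont}, so \eqref{eq:uH1} and \eqref{eq:uH2} apply with $g=0$. This gives $\|z\|_{H^2(\Omega)}\lesssim k\|e\|_{L^2(\Omega)}$.

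Testing with $v=e$ and using orthogonality with $\Pi_h z$, the bound \eqref{eq:sesqui-bound}, and \eqref{approxi}, we obtain
\[
\|e\|_{L^2}^2=a(e,z-\Pi_hz)\lesssim \|e\|_{H^1_k}\,\|z-\Pi_hz\|_{H^1_k}\lesssim \|e\|_{H^1_k}\,(h+kh^2)\|z\|_{H^2}\lesssim kh\,\|e\|_{H^1_k}\|e\|_{L^2}.
\]
This uses $kh\lesssim 1$, which follows from $k^2h\ll1$. Hence $k\|e\|_{L^2}\lesssim k^2h\|e\|_{H^1_k}$.

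Now apply Gårding's inequality to $e$ and split $a(e,e)=a(e,u-v_h)$ for an arbitrary $v_h\in V_h$. This yields
\[
\|e\|_{H^1_k}^2\le |a(e,u-v_h)|+2k^2\|e\|_{L^2}^2\lesssim \|e\|_{H^1_k}\|u-v_h\|_{H^1_k}+(k^2h)^2\|e\|_{H^1_k}^2.
\]
Since $k^2h$ is sufficiently small, the last term can be absorbed into the left-hand side. This gives the first inequality in \eqref{eq:uerrH1}.

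\textbf{Step 2: well-posedness.} The system \eqref{eq:discrete-global} is square and finite-dimensional, so existence follows from uniqueness. Suppose $a(w_h,v_h)=0$ for all $v_h\in V_h$. Then $w_h$ is the Galerkin approximation of the exact solution $u=0$, and the Step 1 argument applies verbatim to $e=-w_h$ with the choice $v_h=0$. It gives $\|w_h\|_{H^1_k}\lesssim 0$, so $w_h=0$.

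\textbf{Step 3: the second inequality.} Take $v_h=\Pi_h u$ and sum \eqref{approxi} over the elements. This gives $\|u-\Pi_hu\|_{H^1_k}\lesssim (h+kh^2)\|u\|_{H^2}\lesssim h\|u\|_{H^2}$.

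\textbf{Main obstacle.} The delicate point is the duality step. One must check that the adjoint problem, which carries the conjugated impedance sign, enjoys the same $k$-explicit $H^2$ bound $\|z\|_{H^2(\Omega)}\lesssim k\|e\|_{L^2(\Omega)}$. I would verify this by taking complex conjugates, which turns the adjoint problem into an instance of \eqref{hel_cont} with data $\bar e$ and $g=0$. One must also track the powers of $k$ precisely, so that the absorption condition is exactly $k^2h\ll1$ and nothing stronger.
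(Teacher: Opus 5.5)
Your proof is correct. The conjugation step properly reduces the adjoint problem to \eqref{hel_cont} with data $\bar e$ and $g=0$, the duality bound $k\|e\|_{L^2(\Omega)}\lesssim k^2h\|e\|_{H^1_k(\Omega)}$ is tracked so that absorption needs exactly $k^2h\ll1$, and uniqueness (hence existence) follows from the same estimate with $u=0$. The paper gives no proof of its own and simply cites \cite[Proposition 8.2.7]{melenk1995generalized}, which rests on this same Schatz-type combination of G\aa rding's inequality \eqref{eq:garding} with adjoint $H^2$-regularity; it is also the mechanism the paper itself uses later in the proof of Theorem~\ref{thm:wavelet-basedconv}.
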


\begin{remark}\label{rm:mesh-resolution}
The mesh resolution condition in Lemma \ref{lema_1} guarantees that the numerical solution $u_h$ is accurate enough for large $k$. It can be relaxed if $H^1$-conforming piecewise polynomial elements of order $p$ are used with $p$ large, or if only the existence of numerical solutions is required. Under the assumption that the stability of the Helmholtz problem is independent of $k$ as \eqref{eq:uH1}, the Galerkin method based on the $h$-version of FEM is quasi-optimal when $k^{p+1}h^p$ is sufficiently small \cite{melenk1995generalized,sauter2006refined,melenk2010convergence,MR2812565}. Under the weaker condition that $k^{2p+1}h^{2p}$ is small enough, the numerical solution exists and the numerical error is bounded by the data uniformly with respect to $k$ \cite{du2015preasymptotic}. In all these cases with quasi-uniform meshes, the dimension of the piecewise polynomial function space exceeds $k^{d(1+\frac{1}{2p})}$.
\end{remark}

\section{The hybrid Schwarz method}\label{sec:hybrid-2}
We present in this section the robust edge multiscale space based hybrid Schwarz method (REMs-HS). We first recap standard notation for domain decomposition methods. Let 
$\mathcal{T}_H \coloneqq \{K_i:i=1,\dots, N\}$ be a partition of $\Omega$ into a quasi-uniform mesh with mesh size $H$, of which $\mathcal{T}_h$ is a refinement. Here $N$ is the number of coarse elements. Let $\Omega_{i}$ be the smallest subdomain associated with $K_{i}\in \mathcal{T}_H$, such that
$\bigcup\left\{T\in\mathcal{T}_h:\text{dist}(T,K_{i})\leq\delta_{i}\right\}\subset \Omega_i$ and the shape of $\Omega_i$ is similar to that of $K_i$. Furthermore, let $\Gamma_i\coloneqq \partial\Omega_i\cap\partial\Omega$ and $\gamma_i\coloneqq \partial\Omega_i\backslash\partial\Omega$ be the local exterior boundary and the local interior boundary of $\Omega_i$. Note that $\gamma_i\neq\varnothing$ by this construction.
Denote by $\delta\coloneqq \min\{\text{dist}(\partial K_i\backslash\partial\Omega,\gamma_i):i=1,\cdots,N\}$ the smallest value. Then $\{\Omega_{i}\}_{i=1}^N$ is an overlapping partition of $\Omega$, i.e., 
$\overline{\Omega}=\bigcup_{i=1}^N\overline{\Omega}_{i}$. We denote by $d_i\coloneqq\op{diam}(\Omega_i)$ the diameter of $\Omega_i$ and  assume that the overlaps are small in the sense that $\delta_i\le H$ and $d_i\simeq H$ for $i=1,\dots,N$. 

We then make the {\it finite-overlap assumption} that $\max\{\#\Lambda_{\tau}:\tau\in\mathcal{T}_h\}\lesssim 1$ with $\Lambda_{\tau}=\{j:\tau\subset\overline{\Omega}_{j}\}$. Then for $v_i\in H^1(\Omega_i)$ with $\sum_{i=1}^N v_i\in H^1(\Omega)$, there holds
\begin{equation}\label{eq:sumpou-ieq}
\biggl\|\sum_{i=1}^N v_i\biggr\|_{H^1_k(\Omega)}^2 \lesssim\sum_{i=1}^N \left\| v_i\right\|_{H^1_k(\Omega_i)}^2.    
\end{equation}
Next, we introduce a sequence of discrete real-valued partition of unity functions $\{\chi_{i}\}_{i=1}^N\subset V_h$ subordinate to the cover $\{\Omega_{i}\}_{i=1}^N$, which satisfies $\sum_{i=1}^N\chi_{i}=1$ in $\overline{\Omega}$ and
\begin{equation}\label{eq:pum-property}
\operatorname{supp}(\chi_i)\subset\overline{\Omega}_{i},\qquad \| \chi_{i}\|_{L^{\infty}(\Omega_{i})}+\delta\|\nabla\chi_i\|_{L^\infty(\Omega_i)}\lesssim 1.
\end{equation}

\subsection{The modified impedance boundary condition}\label{subsec:modified}
We then introduce a novel local problem equipped with a modified impedance boundary condition (mIBC), which reads as seeking $v$ such that
\begin{equation}\label{eq:modified}
\left\{\begin{aligned}
    -\Delta v-k^2v&=f_i && \text{in }\Omega_i\\
    \partial_nv+\mathrm{i}\kappa_i v&=g_i && \text{on }\partial\Omega_i
\end{aligned}\right.
\end{equation}
with the modified impedance parameter being $\kappa_i=k$ on $\Gamma_i$ and $\kappa_i=H^{-1}$ on $\gamma_i$.

The variational formulation for \eqref{eq:modified} reads: Find $v\in H^1(\Omega_i)$ such that 
\begin{equation}
\label{eq:var_form_modified}
    a_i(v,w)=(f_i,w)_{\Omega_i}+(g_i,w)_{\partial\Omega_i}, \quad\forall w\in H^1(\Omega_i),
\end{equation}
where the local sesquilinear form $a_{i}(\cdot,\cdot)$ on $H^1(\Omega_i) \times H^1(\Omega_i)$ is given by
\begin{align}\label{eq:mod-sesqui}
a_i(v,w)\coloneqq(\nabla v,\nabla w)_{\Omega_i}-k^2(v,w)_{\Omega_i}+\mathrm{i}(\kappa_iv,w)_{\partial\Omega_i} \qquad\forall v,w\in H^1(\Omega_i).
\end{align}
Note that we mainly focus on the regime $kH\ll 1$ to make subdomains resolve the wavelength \cite{MR4621835,Graham-geneo-2024,FuGongLiWang:2024,MR4890903}, for which the problem is coercive and the solution has better regularity. We define the natural energy norm for \eqref{eq:modified} as
\begin{equation}\label{def:local-energy}
\|v\|_{H^1_{1/H}(D)}^2\coloneqq\|\nabla v\|_{L^2(D)}^2+H^{-2}\|v\|_{L^2(D)}^2,\qquad\forall D\subset\Omega_i,v\in H^1(D).
\end{equation}
We then present the basic properties for $a_i(\cdot,\cdot)$ \eqref{eq:mod-sesqui} and the {\em a priori} estimate to Problem \eqref{eq:modified}.
\begin{lemma}\label{lema:modified}
Assume that $kH\ll 1$. Then the following properties hold.
\begin{enumerate}
\item The sesquilinear form $a_i(\cdot,\cdot)$ is bounded: 
    \begin{align}\label{eq:bound_modified}
    |a_i(v,w)|\lesssim \|v\|_{H^1_{1/H}(\Omega_i)}\|w\|_{H^1_{1/H}(\Omega_i)}, \qquad\forall v,w\in H^1(\Omega_i).
    \end{align}
\item The sesquilinear form $a_i(\cdot,\cdot)$ is coercive: 
\begin{align}\label{eq:coercive_modified}
|a_i(v,v)|\gtrsim \|v\|_{H^1_{1/H}(\Omega_i)}^2,\qquad\forall v\in H^1(\Omega_i).
\end{align}
\item Assume that $f_i\in L^2(\Omega_i)$ and $g_i\in L^2(\partial\Omega_i)$. Let $v\in H^1(\Omega_i)$ be the solution of \eqref{eq:modified}, then
\begin{equation}\label{eq:uH1_modified}
\|v\|_{H^1_{1/H}(\Omega_i)}\lesssim H\|f_i\|_{L^2(\Omega_i)}+\sqrt{H}\|g_i\|_{L^2(\partial\Omega_i)}.
\end{equation}
Moreover, if $g_i\in H^{1/2}(\partial\Omega_i)$, then
\begin{equation}\label{eq:uH2_modified}
\|v\|_{H^2(\Omega_i)}\lesssim\|f_i\|_{L^2(\Omega_i)}+H^{-1/2}\|g_i\|_{L^2(\partial\Omega_i)}+\|g_i\|_{H^{1/2}(\partial\Omega_i)}.
\end{equation}
\end{enumerate}
\end{lemma}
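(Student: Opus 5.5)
The plan is to reduce everything to two standard ingredients on a domain of diameter $d_i\simeq H$: a scaled trace inequality and a Poincaré-type inequality with boundary term. First, the scaled trace inequality
\[
\|w\|_{L^2(\partial\Omega_i)}^2\lesssim H^{-1}\|w\|_{L^2(\Omega_i)}^2+H\|\nabla w\|_{L^2(\Omega_i)}^2 = H\|w\|^2_{H^1_{1/H}(\Omega_i)},
\]
which follows by scaling from the trace theorem on a reference domain. Here we use that $\Omega_i$ is shape-similar to $K_i$ and hence uniformly Lipschitz after scaling. Second, the scaled Friedrichs inequality with boundary term on a part of the boundary of size comparable to $H$:
\[
H^{-2}\|w\|_{L^2(\Omega_i)}^2\lesssim \|\nabla w\|_{L^2(\Omega_i)}^2+H^{-1}\|w\|_{L^2(\gamma_i)}^2 .
\]
This is again obtained by scaling from the reference inequality $\|w\|_{L^2}\lesssim\|\nabla w\|+\|w\|_{L^2(\hat\gamma)}$, a compactness argument valid since $\gamma_i\neq\varnothing$ has measure $\simeq H^{d-1}$.

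\textbf{Boundedness.} Estimate each term of $a_i$ separately. For the gradient term use Cauchy--Schwarz. For the mass term, $k^2|(v,w)|\le (kH)^2H^{-2}\|v\|\|w\|$, which is admissible since $kH\lesssim1$. For the boundary term, $\kappa_i\le \max(k,H^{-1})\lesssim H^{-1}$, and the scaled trace inequality gives $H^{-1}\|v\|_{L^2(\partial\Omega_i)}\|w\|_{L^2(\partial\Omega_i)}\lesssim \|v\|_{H^1_{1/H}}\|w\|_{H^1_{1/H}}$.

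\textbf{Coercivity.} This is the main step. We have
\[
\Re a_i(v,v)=\|\nabla v\|^2-k^2\|v\|^2,\qquad \Im a_i(v,v)=\int_{\partial\Omega_i}\kappa_i|v|^2\ge H^{-1}\|v\|^2_{L^2(\gamma_i)}.
\]
Hence
\[
\sqrt2\,|a_i(v,v)|\ge \Re a_i+\Im a_i\ge \|\nabla v\|^2+H^{-1}\|v\|_{L^2(\gamma_i)}^2-k^2\|v\|^2 .
\]
By the Friedrichs inequality, the first two terms together control $c H^{-2}\|v\|^2$; precisely, they are $\gtrsim \|v\|_{H^1_{1/H}}^2$. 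The negative term $k^2\|v\|^2=(kH)^2H^{-2}\|v\|^2$ is then absorbed when $kH\ll1$. The only delicate point is verifying that the Friedrichs constant is uniform in $i$ and $H$. I would argue this via the shape regularity assumption. Note that using only $\gamma_i$, not $\Gamma_i$, is what makes the bound independent of $k$.

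\textbf{A priori bounds.} Lax--Milgram together with coercivity gives
\[
\|v\|^2_{H^1_{1/H}}\lesssim |a_i(v,v)|\le \|f_i\|\|v\|+\|g_i\|_{L^2(\partial\Omega_i)}\|v\|_{L^2(\partial\Omega_i)}\le \bigl(H\|f_i\|+\sqrt H\|g_i\|\bigr)\|v\|_{H^1_{1/H}},
\]
using $\|v\|\le H\|v\|_{H^1_{1/H}}$ and the trace inequality. This yields \eqref{eq:uH1_modified}.

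For the $H^2$ bound, rewrite the problem as $-\Delta v+H^{-2}v=f_i+(k^2+H^{-2})v$ with Neumann data $\partial_n v=g_i-\mathrm{i}\kappa_i v$. Since $\Omega_i$ is convex, or at least is chosen polygonal and convex like $K_i$, the scaled Neumann $H^2$-regularity gives
\[
|v|_{H^2}\lesssim \|\text{rhs}\|+\|\partial_n v\|_{H^{1/2}}.
\]
The right-hand side is bounded by $H^{-2}\|v\|\lesssim \|f_i\|+H^{-1/2}\|g_i\|$. For the boundary data, the $H^{1/2}$ norm of $\kappa_i v$ needs care: $\kappa_i$ is piecewise constant, and I expect this to be the main technical obstacle. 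I would handle it by bounding $\|\kappa_i v\|_{H^{1/2}(\partial\Omega_i)}\lesssim H^{-1}\|v\|_{H^1}+H^{-3/2}\|v\|$ via a trace estimate, treating the jump of $\kappa_i$ at $\partial\gamma_i\cap\partial\Gamma_i$ by a smooth cutoff, or alternatively by working with the weighted (scaled) $H^{1/2}$ norm. Inserting \eqref{eq:uH1_modified} then gives $\|f_i\|+H^{-1/2}\|g_i\|+\|g_i\|_{H^{1/2}}$, as claimed.
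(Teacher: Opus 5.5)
Your argument follows the paper's proof step for step: the scaled trace inequality, the scaled Poincar\'e--Friedrichs inequality using only $\gamma_i$, the bound $|z|\gtrsim\Re z+\Im z$ with absorption of $(kH)^2H^{-2}\|v\|_{L^2(\Omega_i)}^2$, testing with $w=v$, and scaled Neumann regularity on a convex polygon. The only variation is that you shift to $-\Delta+H^{-2}$, whereas the paper keeps $-\Delta$ and carries the lower-order term $H^{-1}\|\nabla w\|_{L^2(\Omega_i)}$ in Grisvard's estimate. With your shift, the Neumann datum must be measured in the scaled norm $|\cdot|_{H^{1/2}(\partial\Omega_i)}+H^{-1/2}\|\cdot\|_{L^2(\partial\Omega_i)}$. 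With the unscaled $H^{1/2}$ norm, your displayed regularity bound is not uniform in $H$: take constant Neumann data and $(-\Delta+H^{-2})v=0$. Your final estimate, with its term $H^{-1/2}\|g_i\|_{L^2(\partial\Omega_i)}$, already reflects the scaled norm.

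Where your write-up goes astray is the jump of $\kappa_i$. Neither proposed remedy works. Localizing with a cutoff does not remove the discontinuity of $\kappa_i$ from the datum $g_i-\mathrm{i}\kappa_i v$ near the junction. A weighted $H^{1/2}$ norm still contains the $H^{1/2}$ seminorm, which is infinite for a function with a jump.

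The actual resolution is geometric. On a polygon, the Neumann regularity estimate is in any case formulated with broken norms $\sum_j\|\partial_n w\|_{H^{1/2}(S_j)}$ over the sides $S_j$ of $\Omega_i$, since $\partial_n w$ jumps at corners already for smooth $w$. Convex corners impose no compatibility condition on such Neumann data. In this geometry the junction points $\overline{\Gamma}_i\cap\overline{\gamma}_i$ (edges when $d=3$) are vertices of $\Omega_i$, because the interior boundary meets $\partial\Omega$ transversally. Hence $\kappa_i$ is constant on each side, and your trace bound for $\kappa_i v$ applies side by side.

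This point is essential, not cosmetic. If a junction lay in the interior of a straight side, the Neumann datum would generically jump there. The solution would then carry an $r\log r$-type singularity whose Hessian behaves like $r^{-1}$, and \eqref{eq:uH2_modified} would fail. The paper relies on the same side-by-side reading implicitly, when it bounds $\|g_i-\mathrm{i}\kappa_iv\|_{H^{1/2}(\partial\Omega_i)}$ by the trace inequality without comment.
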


\begin{proof}
By definition,
\[
|a_i(v,w)|\le\|v\|_{H^1_k(\Omega_i)}\|w\|_{H^1_k(\Omega_i)}+H^{-1}\|v\|_{L^2(\partial\Omega_i)}\|w\|_{L^2(\partial\Omega_i)}\lesssim\|v\|_{H^1_{1/H}(\Omega_i)}\|w\|_{H^1_{1/H}(\Omega_i)},
\]
where we have used the trace inequality
\begin{equation}\label{eq:trace}
\|v\|_{L^2(\partial\Omega_i)}\lesssim H^{-1/2}\|v\|_{L^2(\Omega_i)}+\|v\|_{L^2(\Omega_i)}^{1/2}\|\nabla v\|_{L^2(\Omega_i)}^{1/2},\qquad\forall v\in H^1(\Omega_i),
\end{equation}
in the last step. This proves \eqref{eq:bound_modified}.

To prove the coercivity of $a_i(\cdot,\cdot)$, we first recall the scaled Poincar\'e--Friedrichs inequality owing to the fact that $\gamma_i\neq\varnothing$:
\[
\|v\|_{L^2(\Omega_i)}\lesssim H\|\nabla v\|_{L^2(\Omega_i)}+\sqrt{H}\|v\|_{L^2(\gamma_i)},\qquad\forall v\in H^1(\Omega_i).
\]
Then combining with the definition \eqref{eq:mod-sesqui} with $w=v$ and the elementary calculation that $|a+\mathrm{i}b|\geq \frac{1}{2}(a+b)$ for $a,b\in\mathbb{R}$, we derive 
\begin{align*}
|a_i(v,v)|&\ge\frac{1}{2}\Bigl(\|\nabla v\|_{L^2(\Omega_i)}^2-k^2\|v\|_{L^2(\Omega_i)}^2+H^{-1}\|v\|_{L^2(\gamma_i)}^2\Bigr)\\
&\gtrsim(1-k^2H^2)\Bigl(\|\nabla v\|_{L^2(\Omega_i)}^2+H^{-1}\|v\|_{L^2(\gamma_i)}^2\Bigr)\gtrsim\|v\|_{H^1_{1/H}(\Omega_i)}^2.
\end{align*}
This proves \eqref{eq:coercive_modified}.

Next, testing \eqref{eq:var_form_modified} with $w=v$ gives
\[
|a_i(v,v)|\le\|f_i\|_{L^2(\Omega_i)}\|v\|_{L^2(\Omega_i)}+\|g_i\|_{L^2(\partial\Omega_i)}\|v\|_{L^2(\partial\Omega_i)}\lesssim\|v\|_{H^1_{1/H}(\Omega_i)}\Bigl(H\|f_i\|_{L^2(\Omega_i)}+\sqrt{H}\|g_i\|_{L^2(\partial\Omega_i)}\Bigr),
\]
where we have used the trace inequality \eqref{eq:trace} and the definition of the natural energy norm \eqref{def:local-energy} in the last step. Combining this with \eqref{eq:coercive_modified} yields \eqref{eq:uH1_modified}.

Finally, by scaling the {\em a priori} estimate of \cite[Theorem 2.3.3.2]{Grisvard:1985} for the convex polygon domain \cite[Theorem 2.4.3, Remark 2.4.5]{Grisvard:1992},
\[
\|\nabla^2w\|_{L^2(\Omega_i)}\lesssim\|\Delta w\|_{L^2(\Omega_i)}+\|\partial_nw\|_{H^{1/2}(\partial\Omega_i)}+H^{-1}\|\nabla w\|_{L^2(\Omega_i)},\qquad\forall w\in H^2(\Omega_i).
\]
Together with the equation \eqref{eq:modified} satisfied by $v$ and the trace inequality, this yields
\begin{align*}
\|\nabla^2v\|_{L^2(\Omega_i)}&\lesssim \|f_i+k^2v\|_{L^2(\Omega_i)}+\|g_i-\mathrm{i}\kappa_iv\|_{H^{1/2}(\partial\Omega_i)}+H^{-1}\|\nabla v\|_{L^2(\Omega_i)}\\
&\lesssim H^{-1}\|v\|_{H^1_{1/H}(\Omega_i)}+\|f_i\|_{L^2(\Omega_i)}+\|g_i\|_{H^{1/2}(\partial\Omega_i)},
\end{align*}
which, together with \eqref{eq:uH1_modified}, gives \eqref{eq:uH2_modified}.
\end{proof}
Note that the coercivity \eqref{eq:bound_modified} and boundedness \eqref{eq:coercive_modified} of $a_i(\cdot,\cdot)$ implies that the weak formulation \eqref{eq:var_form_modified} admits a unique solution due to the Lax--Milgram theorem. We then discretize problem \eqref{eq:var_form_modified} with the local $H^1$-conforming piecewise linear finite element space
$V_{h,i}:=\{v_h|_{\Omega_{i}}: v_h\in V_h\}\subset H^1(\Omega_i)$, which reads as seeking $v_h\in V_{h,i}$ such that 
\begin{equation}\label{eq:vh}
    a_i(v_h,w_h)=(f_i,w_h)_{\Omega_i}+(g_i,w_h)_{\partial\Omega_i}, \qquad\forall w_h\in V_{h,i}.
\end{equation}
This discrete problem is again well-posed. Its error is governed by the following standard estimate, which follows from C\'ea's lemma and an Aubin--Nitsche duality argument.

\begin{lemma}\label{lema:uerrH1_modified}
Let $v\in H^1(\Omega_i)$ and $v_h\in V_{h,i}$ be the solutions of \eqref{eq:var_form_modified} and \eqref{eq:vh}, respectively. Under the assumption $kH\ll 1$, there holds
\[
\|v-v_h\|_{L^2(\Omega_i)}+h\|v-v_h\|_{H^1_{1/H}(\Omega_i)} \lesssim h^2\|v\|_{H^2(\Omega_i)}.
\]
\end{lemma}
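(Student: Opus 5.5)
The plan is the standard two-step argument: first a C\'ea-type energy estimate from the coercivity and boundedness of $a_i(\cdot,\cdot)$ in Lemma~\ref{lema:modified}, then an Aubin--Nitsche duality argument for the $L^2$ part, using the $H^2$ regularity \eqref{eq:uH2_modified} for the adjoint problem.

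\emph{Energy estimate.} Galerkin orthogonality $a_i(v-v_h,w_h)=0$ holds for all $w_h\in V_{h,i}$. Combining it with \eqref{eq:coercive_modified} and \eqref{eq:bound_modified} gives, for any $w_h\in V_{h,i}$,
\[
\|v-v_h\|_{H^1_{1/H}(\Omega_i)}^2\lesssim|a_i(v-v_h,v-v_h)|=|a_i(v-v_h,v-w_h)|\lesssim\|v-v_h\|_{H^1_{1/H}(\Omega_i)}\|v-w_h\|_{H^1_{1/H}(\Omega_i)}.
\]
Hence the discrete error is quasi-optimal in the $H^1_{1/H}$ norm. Choose $w_h=\Pi_hv|_{\Omega_i}$; this is admissible because $\Omega_i$ is a union of fine elements. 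The interpolation estimate \eqref{approxi}, summed over $T\subset\Omega_i$, gives
\[
\|v-\Pi_hv\|_{H^1_{1/H}(\Omega_i)}\lesssim h\|v\|_{H^2(\Omega_i)}+H^{-1}h^2\|v\|_{H^2(\Omega_i)}\lesssim h\|v\|_{H^2(\Omega_i)},
\]
since $h\le H$. So $\|v-v_h\|_{H^1_{1/H}(\Omega_i)}\lesssim h\|v\|_{H^2(\Omega_i)}$, which after multiplying by $h$ is the second term of the claim.

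\emph{Duality.} Let $e=v-v_h$ and let $z\in H^1(\Omega_i)$ solve the adjoint problem $a_i(w,z)=(w,e)_{\Omega_i}$ for all $w\in H^1(\Omega_i)$. Strongly, this reads $-\Delta z-k^2z=e$ in $\Omega_i$ and $\partial_nz-\mathrm{i}\kappa_iz=0$ on $\partial\Omega_i$, which is \eqref{eq:modified} with the impedance sign flipped. The adjoint form is $\overline{a_i(\cdot,\cdot)}$ with arguments swapped, i.e.\ the same form with $\mathrm{i}$ replaced by $-\mathrm{i}$. The proof of Lemma~\ref{lema:modified} uses only $|\Re a|+|\Im a|$ and the trace and Poincar\'e inequalities, so it applies verbatim. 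Thus $z$ exists and satisfies \eqref{eq:uH2_modified} with $g_i=0$, namely $\|z\|_{H^2(\Omega_i)}\lesssim\|e\|_{L^2(\Omega_i)}$. Galerkin orthogonality then gives
\[
\|e\|_{L^2(\Omega_i)}^2=a_i(e,z)=a_i(e,z-\Pi_hz)\lesssim\|e\|_{H^1_{1/H}(\Omega_i)}\,h\|z\|_{H^2(\Omega_i)}\lesssim h^2\|v\|_{H^2(\Omega_i)}\|e\|_{L^2(\Omega_i)}.
\]
Dividing by $\|e\|_{L^2(\Omega_i)}$ yields the $L^2$ bound, and adding the two estimates gives the lemma.

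\emph{Main obstacle.} The only delicate point is justifying the $H^2$ regularity of the adjoint problem. This requires that $\Omega_i$ be convex, or at least a domain for which the scaled Grisvard estimate used in Lemma~\ref{lema:modified} holds. It also requires that the piecewise constant $\kappa_i$ be handled correctly: $\kappa_iz$ must lie in $H^{1/2}(\partial\Omega_i)$ even though $\kappa_i$ jumps where $\gamma_i$ meets $\Gamma_i$. I would simply invoke the same assumptions under which \eqref{eq:uH2_modified} was stated, noting that the proof given there does not depend on the sign of the imaginary term. Everything else is routine.
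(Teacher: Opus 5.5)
Your proof is correct and follows the paper's argument: C\'ea's lemma from the coercivity and boundedness of $a_i$ with the interpolant $\Pi_h v$, then an Aubin--Nitsche duality argument using Galerkin orthogonality and the adjoint $H^2$ regularity \eqref{eq:uH2_modified}. Your remark that the adjoint form differs only in the sign of the impedance term, so that Lemma~\ref{lema:modified} applies to it verbatim, fills in a step the paper leaves implicit.
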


\begin{proof}
Since $a_i(\cdot,\cdot)$ is bounded and coercive, then C\'ea's lemma gives
\[
\|v-v_h\|_{H^1_{1/H}(\Omega_i)}\lesssim\|v-\Pi_hv\|_{H^1_{1/H}(\Omega_i)}\lesssim h\|v\|_{H^2(\Omega_i)}.
\]

Next, we consider the adjoint problem of \eqref{eq:var_form_modified}, which seeks $z\in H^1(\Omega_i)$, s.t., 
\[
a_i(w,z)=(w,v-v_h)_{\Omega_i}\qquad\forall w\in H^1(\Omega_i).
\]
Choosing $w=v-v_h$, then a combination of the Galerkin orthogonality and the boundedness of $a_i(\cdot,\cdot)$ \eqref{eq:bound_modified} yields
\begin{align*}
\|v-v_h\|_{L^2(\Omega_i)}^2&=a_i(v-v_h,z)=a_i(v-v_h,z-\Pi_hz)\lesssim\|v-v_h\|_{H^1_{1/H}(\Omega_i)}\|z-\Pi_hz\|_{H^1_{1/H}(\Omega_i)}\\
&\lesssim h^2\|v\|_{H^2(\Omega_i)}\|z\|_{H^2(\Omega_i)}\lesssim h^2\|v\|_{H^2(\Omega_i)}\|v-v_h\|_{L^2(\Omega_i)},
\end{align*}
where the adjoint $H^2$-regularity for $z$ \eqref{eq:uH2_modified} was used in the last step. This proves the desired estimate.
\end{proof}

\subsection{RAS-imp preconditioner}
We formulate in this section the one-level Schwarz domain decomposition method in operator form. Define the linear operator ${A}:V_{h}\to V_{h}'$ by
\begin{align}\label{eq:Ah-linear}
\langle {A}v_h,w_h\rangle\coloneqq a(v_h,w_h),\qquad \forall v_h, w_h \in V_{h},
\end{align}
where $\langle \cdot,\cdot\rangle$ denotes the duality pairing. The global linear system is then $A u_h=F_h$, where $F_h\in V_h'$ is defined by $\langle F_h,v_h\rangle := (f,v_h)_\Omega+(g,v_h)_{\partial\Omega}$ for all $v_h\in V_h$.

Next, we consider the local operator $A_i :V_{h,i}\to V_{h,i}'$ by
\[
\langle {A}_{i}v_h,w_h\rangle\coloneqq a_{i}(v_h,w_h),\qquad \forall v_h,w_h\in V_{h,i},
\]
where the sesquilinear form $a_i(\cdot,\cdot): H^1(\Omega_i)\times H^1(\Omega_i)\to \mathbb{C}$ is defined by \eqref{eq:mod-sesqui}.
 
Note that the functions in $V_h$ and $V_{h,i}$ are uniquely determined by their nodal values at the nodes $\{x_j: j\in \mathcal{I}\}$ in $\overline{\Omega}$ and $\{x_j: j\in \mathcal{I}_i\}$ in $\overline{\Omega}_i$, respectively. We introduce the prolongation ${R}_i^T: V_{h,i} \to V_h$ by 
\begin{align*}
R_i^Tv_h(x_j)=\left\{
\begin{aligned}
v_h(x_j), \quad j\in \mathcal{I}_i,\\
0,\quad \text{otherwise},
\end{aligned}\right.\qquad\forall v_h\in V_{h,i},
\end{align*} 
which is defined nodewise. We then define $R_i: V'_{h}\to V'_{h,i}$ by duality. To make the prolongation more stable, we employ the weighted prolongation $\tilde{R}_i^T:V_{h,i}\to V_h$ defined with the partition of unity functions by $\tilde{R}_i^T v_h\coloneqq {R}_i^T \Pi_h(\chi_i v_h)$. This weighted prolongation fulfills the property
\begin{align}\label{eq:pou}
\sum_{i=1}^N \tilde{R}_i^T(v_h|_{\Omega_i})=v_h, \quad\forall v_h\in V_h.
\end{align}
Then the restricted additive Schwarz preconditioner with mIBCs (RAS-imp) is the operator $B_{\operatorname{RAS-imp}}: V_{h}'\to V_h$ defined by $B_{\operatorname{RAS-imp}} \coloneqq \sum_{i=1}^N \tilde{R}_i^T  A_i^{-1}R_i$. The preconditioned operator $B_{\operatorname{RAS-imp}} A :V_h\to V_h$ can be written as $B_{\operatorname{RAS-imp}}A = \sum_{i=1}^N  \tilde{R}_i^T Q_i$, where the local Schwarz operator $Q_i:V_h\to V_{h,i}$ is defined by
\[
a_i(Q_i v_h, w_h) = a(v_h,R_i^T w_h),\qquad \forall v_h\in V_h\text{ and } w_h \in V_{h,i}.
\]
Finally, we end this section with an auxiliary result that demonstrates the stability of the weighted prolongation $\tilde{R}_i^T$.
\begin{lemma}\label{lema:pum-2}
Let $v_i\in V_{h,i}$ for $i=1,\dots,N$ and
$v_h:=\sum_{i=1}^N \tilde{R}_i^T v_i\in V_h$.
Then
\begin{align}
\|v_h\|_{H^1_k(\Omega)}^2\lesssim\sum_{i=1}^N \|\chi_iv_i\|_{H^1_k(\Omega_i)}^2
.\label{eq:glo-err-pum3}
\end{align}
\end{lemma}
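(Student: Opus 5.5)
The plan is to reduce the estimate to the finite-overlap inequality \eqref{eq:sumpou-ieq} together with a local stability bound for the nodal interpolant of piecewise quadratic functions. By definition, $\tilde R_i^T v_i = R_i^T\Pi_h(\chi_i v_i)$. Since $\chi_i\in V_h$ vanishes outside $\overline{\Omega}_i$, the function $\Pi_h(\chi_iv_i)$ vanishes at all nodes on $\gamma_i$. Its zero extension $R_i^T\Pi_h(\chi_iv_i)$ is therefore a conforming function in $V_h$, supported in $\overline\Omega_i$, and it coincides with $\Pi_h(\chi_iv_i)$ there. Setting $w_i:=\Pi_h(\chi_iv_i)$ on $\Omega_i$ and extending it by zero, we have $v_h=\sum_i w_i$. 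Applying \eqref{eq:sumpou-ieq} gives
\[
\|v_h\|_{H^1_k(\Omega)}^2\lesssim \sum_{i=1}^N\|\Pi_h(\chi_iv_i)\|_{H^1_k(\Omega_i)}^2 .
\]

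It then remains to show the local bound $\|\Pi_h(\chi_iv_i)\|_{H^1_k(\Omega_i)}\lesssim \|\chi_iv_i\|_{H^1_k(\Omega_i)}$, which I would prove elementwise. On each $T\in\mathcal T_h$ with $T\subset\overline\Omega_i$, the product $\chi_iv_i$ is a quadratic polynomial, so $\Pi_h$ restricted to $\mathbb P_2(T)$ is a linear map on a finite-dimensional space. Scaling to a reference element shows that both $\|\Pi_h q\|_{L^2(T)}\lesssim\|q\|_{L^2(T)}$ and $\|\nabla\Pi_h q\|_{L^2(T)}\lesssim\|\nabla q\|_{L^2(T)}$ hold for $q\in\mathbb P_2(T)$, with constants depending only on shape regularity. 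The $L^2$ bound follows from equivalence of norms on $\mathbb P_2$, since nodal values are controlled by the sup norm. The gradient bound holds because $\Pi_h$ preserves constants: replacing $q$ by $q-c$ and using equivalence of the seminorms $|\cdot|_{H^1}$ on $\mathbb P_2/\mathbb R$, which is legitimate because $\Pi_h$ maps $\mathbb P_2/\mathbb R$ into $\mathbb P_1/\mathbb R$, yields the estimate. Summing over elements gives the local bound.

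Squaring the local bound and substituting into the display above yields \eqref{eq:glo-err-pum3}.

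The main (though mild) obstacle is the interpolation stability on quadratics. The key point is that it is an inverse-type estimate valid only because $\chi_iv_i$ is piecewise polynomial of fixed degree, with constants independent of $h$; no $H^2$ regularity is available. The only other care needed is to confirm that the zero extension is conforming, which uses that $\chi_i$ vanishes on $\gamma_i$ because $\operatorname{supp}\chi_i\subset\overline{\Omega}_i$.
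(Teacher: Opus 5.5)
Your proposal is correct and follows essentially the same route as the paper: you apply the finite-overlap inequality \eqref{eq:sumpou-ieq} and then use elementwise stability of $\Pi_h$ on the piecewise quadratic function $\chi_i v_i$. The only differences are minor. You derive that stability by a reference-element norm-equivalence argument, where the paper uses \eqref{approxi} together with an inverse inequality. You also check explicitly that the zero extension $R_i^T\Pi_h(\chi_i v_i)$ is conforming and supported in $\overline{\Omega}_i$, which the paper uses without comment.
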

\begin{proof}
From the approximation property \eqref{approxi} of $\Pi_{h}$ and the inverse inequality,
\begin{equation}\label{eq:Pihstability}
\|\nabla^j[(I-\Pi_h)(\chi_iv_i)]\|_{L^2(T)}\lesssim\|\nabla^j(\chi_iv_i)\|_{L^2(T)},\qquad\forall T\in\mathcal{T}_h,\quad j=0,1.
\end{equation}
Therefore, by the triangle inequality, $\tilde{R}_i^T$ is quasi-stable in the sense that
\[
\|\tilde{R}_i^Tv_i\|_{H^1_k(\Omega_i)}\le\|\chi_iv_i\|_{H^1_k(\Omega_i)}+\|(I-\Pi_h)(\chi_iv_i)\|_{H^1_k(\Omega_i)}\lesssim\|\chi_iv_i\|_{H^1_k(\Omega_i)}.
\]
Combining \eqref{eq:sumpou-ieq} with the above stability, we obtain
\[
\|v_h\|_{H^1_k(\Omega)}^2\lesssim\sum_{i=1}^N\|\tilde{R}_i^Tv_i\|_{H^1_k(\Omega_i)}^2\lesssim\sum_{i=1}^N\|\chi_iv_i\|_{H^1_k(\Omega_i)}^2,
\]
which proves the assertion.
\end{proof}
\subsection{Coarse space}\label{sec:coarse-space}
To define the coarse space, we first introduce discrete linear space $V_{h,i}^\partial$ composed of discrete Helmholtz-harmonic functions,
\begin{align}\label{eq:local-space}
V^\partial_{h,i}\coloneqq\{v_h\in V_{h,i}: a_i(v_h,w_h)=0,\quad \forall w_h\in V_{h,i}^0\},
\end{align}
where $V_{h,i}^0\subset V_{h,i}$ denotes the subspace of functions vanishing on the inner boundary,
\[
V_{h,i}^0:=\{v_h\in V_{h,i}:\;v_h=0\text{ on }\gamma_i\}.
\]
Using the weighted prolongation $\tilde{R}_i^T: V_{h,i}\to V_h$, we assemble these local spaces into the global space
\begin{align}\label{eq:glo-edge-space}
V^\partial_h:=\mathrm{span}\Bigl\{\sum_{i=1}^N \tilde{R}_i^T v_i: v_i\in V^\partial_{h,i}\Bigr\}.
\end{align}
The purpose of the coarse space $V_{h,0}$ is to approximate $V^\partial_h$, which is motivated by the fact that the local residual $v_h|_{\Omega_i}-Q_iv_h\in V_{h,i}^\partial$ for every $v_h\in V_h$. The key step is to find, in each coarse subdomain $\Omega_i$, local multiscale basis functions that approximate the local space $V^\partial_{h,i}$ well. Then the global multiscale basis functions, obtained from the local ones via the weighted prolongation $\tilde{R}_i^T$, inherit these approximation properties by Lemma \ref{lema:pum-2}. For simplicity, we describe the construction for $d=2$. The extension to $d=3$ is straightforward.

Let $\mathcal{T}_h(\Omega_i)\coloneqq \mathcal{T}_h\cap\Omega_i$ be the restriction of $\mathcal{T}_h$ on $\Omega_i$. First, we define the edge space over the boundary $\partial\Omega_i$. Let $\Gamma_i^j$, $j=1,\dots,m_i$ be a uniformly shape-regular decomposition of $\partial\Omega_i$ into affine edges with $m_i$ uniformly bounded independent of $h$ and $H$: $\cup_{j=1}^{m_i}\overline{\Gamma}_{i}^j=\partial\Omega_i$ and $\Gamma_{i}^j\cap \Gamma_{i}^{j'}=\varnothing$ for $j\neq j'$. We assume each edge $\Gamma_i^j$ contains $2^{L_i^j}$ number of fine edges $\mathcal{E}_h(\Omega_i)\coloneqq \cup_{T\in\mathcal{T}_h(\Omega_i)}\partial T$ for some $L_i^j\in\mathbb{N}$. Let $L\coloneqq\min\{L_i^j:i=1,\cdots, N,\;j=1,\cdots,m_i\}$ be the maximal level. Then there exists a quasi-uniform mesh $\mathcal{T}_{h_{\ell}}(\Omega_i)$ on $\Omega_i$ with mesh size $h_{\ell}\simeq d_i 2^{-\ell}$ for $\ell\in\mathbb{N}$ and $\ell\leq L$ such that $\mathcal{T}_h(\Omega_i)$ is its refinement. We denote $\mathcal{V}_{h_{\ell}}(\Omega_i)$ the collection of vertices and $V_{\ell}(\Gamma_{i}^j)$ the linear space spanned by the nodal bases determined by the nodes $\mathcal{V}_{h_{\ell}}(\Omega_i)\cap\Gamma_{i}^{j}$, then we define the local edge space,
\begin{align}\label{eq:local-edge-space}
E_{\ell,i}\coloneqq\Bigl\{\psi_h\in C(\partial\Omega_i):\ \psi_h|_{\Gamma_{i}^j}\in V_{\ell}(\Gamma_{i}^j)\ \ \forall\, j=1,\dots,m_i\Bigr\}.
\end{align}
We next introduce $\mathcal{P}_{\ell,i}:V_h|_{\partial\Omega_i}\to E_{\ell,i}$ as the $L^2$-projection. Then the standard $L^2$-projection error estimates \cite{MR520174} combined with an interpolation argument implies
%\begin{proposition}[Approximation properties of the hierarchical space $V_{\ell}$]\label{prop:approx-wavelets}
%Let $d=2,3$ and $s\in(0,2]$, and let $\mathcal{P}_\ell: H^s(I^{d-1})\to V_{\ell}^{\otimes^{d-1}}$ be the $L^2$-projection at level $\ell\geq 0$. Then
\begin{align}\label{eq:approx-wavelets}
%P_{\ell+1} v&=P_{\ell}v+\sum\limits_{j=0}^{2^{\ell}-1}(v,\psi_{\ell+1,j})_{I}\psi_{\ell+1,j}&\text{for all }v\in L^2(I)\\
\|v-\mathcal{P}_{\ell,i} v\|_{L^2(\partial\Omega_i)}&\lesssim 2^{-s\ell}d_i^s|v|_{H^{s}(\partial\Omega_i)},\qquad\forall v\in H^s(\partial\Omega_i) \text{ with }0\leq s\leq 2.
\end{align}
%\end{proposition}
%In particular, every corner of $\Omega_i$, being the common endpoint of two adjacent coarse edges, is assigned a single degree of freedom: the nodal basis of $E_{\ell,i}$ is obtained by taking the union, rather than the sum, of the nodal bases of the coarse edges $\Gamma_i^j$. %For $d=2$ this yields $n_{i,\ell}:=4\,(2^{\ell}+1)-4=2^{\ell+2}$ since each of the $2d$ coarse edges contributes $2^{\ell}+1$ nodal values and the $2d$ corners are otherwise counted twice. The same counting rule applies for $d=3$, where corners and edges of the coarse faces are assigned a single degree of freedom with $n_{i,\ell}:=6(2^{\ell}+1)^2-12(2^\ell+1)+8=3\times 2^{2\ell+1}+2$.
%Furthermore,
%we denote by $E_{\ell,i}^j\subset C(\gamma_i)$ the vector space spanned by hierarchical bases up to level $\ell$ on each coarse edge $\Gamma_{i}^{j}$ and continuous over $\gamma_i$. 
%The local edge space $E_{\ell,i}$ defined over $\gamma_i$ is the smallest linear space having $E_{\ell,i}^j$ as a subspace. 
Let $\{\psi_{\ell,i}^k\}_{k=1}^{n_{i,\ell}}$ and 
$\{x_{\ell,i}^k\}_{k=1}^{n_{i,\ell}}$ be the nodal bases and the associated nodal points for $E_{\ell,i}$, then the local edge space $E_{\ell,i}$ can by represented by $E_{\ell,i}= \text{span} \big\{\psi_{\ell,i}^k : 1 \leq k \leq  n_{i,\ell}\big\}$. %We depict in Figure \ref{fig:grid-points-local} the grid points $\{x_{\ell,i}^k\}_{k=1}^{n_{i,\ell}}$ over $\partial\Omega_i$ for level parameter $\ell\in\{0,1,2\}$. 
Next, we define the local multiscale space over each coarse subdomain $\Omega_i$ by 
\begin{align}\label{eq:local-multiscale}
\mathcal{L}^{-1}_{h,i}(E_{\ell,i})
:= \text{span} \left\{\mathcal{L}^{-1}_{h,i}(\psi_{\ell,i}^j): 1 \leq j \leq  n_{i,\ell}\right\},
\end{align}
where $\mathcal{L}^{-1}_{h,i} (\psi_{\ell,i}^k):=v_h\in V_{h,i}$ is the solution of the local problem
\begin{equation}
\label{eq:Li}
         a_i(v_h,w_h)=(\psi_{\ell,i}^k,w_h)_{\partial\Omega_i} \qquad\forall w_h\in V_{h,i}.
\end{equation}
%By construction, $\operatorname{dim}\mathcal{L}^{-1}_{h,i} (E_{\ell,i})=n_{i,\ell}$. 
Finally, the edge multiscale ansatz space $V_{h,0}$ is assembled from the local multiscale spaces via the weighted prolongation $\tilde{R}_i^T$:
\begin{align}\label{eq:global-multiscale}
V_{h,0} := \text{span} \left\{\tilde{R}_{i}^T\mathcal{L}^{-1}_{h,i}(\psi_{\ell,i}^j): \, 1 \leq i \leq N,\, 1 \leq j \leq  n_{i,\ell}\right\}.
\end{align}
The construction of $V_{h,0}$ is summarized in Algorithm \ref{algorithm:wavelet}.
%Let $\Phi$ be the matrix with a set of basis functions of $V_{h,0}$ being its columns. 

\begin{algorithm}[htbp]
\caption{Construction of the coarse space $V_{h,0}$.}
\label{algorithm:wavelet}
\begin{algorithmic}[1]
\REQUIRE the subdomain $\Omega_i$, the level parameter $\ell\in \mathbb{N}$
\ENSURE $V_{h,0}$
\STATE Construct the local edge space $E_{\ell,i}$ by \eqref{eq:local-edge-space}.
\STATE Compute the local multiscale space $\mathcal{L}^{-1}_{h,i} (E_{\ell,i})$ by \eqref{eq:local-multiscale}.
\STATE Construct the coarse space $V_{h,0}$ by \eqref{eq:global-multiscale}.
\end{algorithmic}
\end{algorithm}
\subsection{The REMs-HS preconditioner}
The hybrid preconditioner is a multiplicative assembly of the one-level RAS-imp preconditioner $B_{\operatorname{RAS-imp}}$ with a coarse solver determined by the coarse space $V_{h,0}\subset V_h$ \eqref{eq:global-multiscale}. Assuming the natural prolongation $R_0^T:V_{h,0} \to V_h$, we define the coarse problem $A_0 := R_0 A R_0^T : V_{h,0}\to V_{h,0}',
$ where $R_0 :V_{h}'\to V_{h,0}'$ is defined by duality. Then 
\[
\langle A_0v_h,w_h\rangle=\langle R_0AR_0^Tv_h,w_h\rangle=\langle AR_0^Tv_h,R_0^Tw_h\rangle=a(v_h,w_h),\qquad\forall v_h,w_h\in V_{h,0}.
\]
The REMs-HS preconditioner is then defined by 
\begin{equation*}
B_{\mathrm{hybrid}} : =  R_0^TA_0^{-1}R_0(I-AB_{\operatorname{RAS-imp}}) + B_{\operatorname{RAS-imp}}.
\end{equation*}
%Note that the RAS-imp preconditioner $B_{\operatorname{RAS-imp}}$ is applied only once in the computation of $B_{\mathrm{hybrid}}$: given a residual equation $Ae^{(0)}=r^{(0)}$, one first computes the one-level correction $e^{(1)}=B_{\operatorname{RAS-imp}}r^{(0)}$, then updates the residual $r^{(1)}=r^{(0)}-Ae^{(1)}$, and finally solves the coarse problem for $e^{(2)}=R_0^TA_0^{-1}R_0r^{(1)}$. The overall correction is then $B_{\mathrm{hybrid}}r^{(0)}=e^{(1)}+e^{(2)}$.
The hybridly preconditioned operator can be written as
\[
B_{\mathrm{hybrid}}A= R_0^TQ_0(I - B_{\operatorname{RAS-imp}}A) + B_{\operatorname{RAS-imp}}A,
\]
where $Q_0=A_0^{-1}R_0A:V_h\to V_{h,0}$ is the Galerkin projection onto $V_{h,0}$. Indeed,
\begin{equation}\label{eq:Q0}
a(Q_0v_h,w_h)=\langle Av_h,R_0^Tw_h\rangle=a(v_h,w_h),\qquad\forall v_h\in V_h,w_h\in V_{h,0}.
\end{equation}
Using the hybrid preconditioner $B_{\mathrm{hybrid}}$, we obtain the fixed-point iteration $u^{(n+1)}_h = u^{(n)}_h +B_{\mathrm{hybrid}} (F_h - A u^{(n)}_h)$, whose associated error transfer operator is
\[I-B_{\mathrm{hybrid}}A =(I-R_0^TQ_0)(I - B_{\operatorname{RAS-imp}}A)
 = (I-R_0^TQ_0)\biggl(I - \sum_{i=1}^N\tilde{R}^T_i Q_i\biggr).
\]
Furthermore, \eqref{eq:pou} implies
\begin{equation*}
(I-B_{\mathrm{hybrid}}A)v_h 
=(I-R_0^TQ_0)\sum_{i=1}^N \tilde{R}^T_i  \left(v_h|_{\Omega_i}- Q_i v_h\right) \qquad\forall v_h\in V_h.
\end{equation*}
Our main goal is to show that the operator $I-B_{\mathrm{hybrid}}A$ is contractive under minimal requirements on the overlap size $\delta$, the level parameter $\ell$ and under a mild resolution condition on $H$. To this end we first give an auxiliary estimate on the discrepancy between the local Schwarz projection $Q_i v_h$ and the restriction $v_h|_{\Omega_i}$.

\begin{lemma}\label{lem:decomp}
Assume $kH\ll 1$. Then, for every $v_h\in V_h$,
\begin{align}\label{eq:vhpar}
\|v_h-Q_iv_h\|_{H^1_{1/H}(\Omega_i)}\lesssim\Bigl(\frac{1}{kH}+\sqrt{\frac{H}{h}}\Bigr)\|v_h\|_{H^1_k(\Omega_i^h)},
\end{align}
where $\Omega_i^h:=\cup\{T\in\mathcal{T}_h:\overline{T}\cap\overline{\Omega}_i\neq\varnothing\}$ for $i=1,\dots,N$.
\end{lemma}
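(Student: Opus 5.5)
The plan is a coercivity argument for the local residual. I set $e_h:=v_h|_{\Omega_i}-Q_iv_h\in V_{h,i}$. By \eqref{eq:coercive_modified} it suffices to show that, for every $w_h\in V_{h,i}$,
\[
|a_i(e_h,w_h)|\lesssim\Bigl(\frac{1}{kH}+\sqrt{\frac{H}{h}}\Bigr)\|v_h\|_{H^1_k(\Omega_i^h)}\|w_h\|_{H^1_{1/H}(\Omega_i)},
\]
and then take $w_h=e_h$. By the definition of $Q_i$, $a_i(e_h,w_h)=a_i(v_h,w_h)-a(v_h,R_i^Tw_h)$, so everything reduces to comparing these two forms.

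The first step is to write the difference explicitly. The zero extension $R_i^Tw_h$ coincides with $w_h$ on $\Omega_i$. Outside $\Omega_i$ it is supported in the one-element layer $L_i:=\Omega_i^h\setminus\Omega_i$, and there it is determined only by the nodal values of $w_h$ on $\gamma_i$. On $\Gamma_i$ the two boundary terms cancel, since $\kappa_i=k$ there. Hence
\begin{align*}
a_i(v_h,w_h)-a(v_h,R_i^Tw_h)={}&\mathrm{i}H^{-1}(v_h,w_h)_{\gamma_i}-(\nabla v_h,\nabla R_i^Tw_h)_{L_i}\\
&+k^2(v_h,R_i^Tw_h)_{L_i}-\mathrm{i}k(v_h,R_i^Tw_h)_{\partial\Omega\cap\partial L_i}.
\end{align*}

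The second step bounds the $\gamma_i$ term, which produces the factor $1/(kH)$. By the trace inequality \eqref{eq:trace} and $kH\ll1$,
\[
\|v_h\|_{L^2(\gamma_i)}\lesssim H^{-1/2}k^{-1}\|v_h\|_{H^1_k(\Omega_i)}+k^{-1/2}\|v_h\|_{H^1_k(\Omega_i)}\lesssim H^{-1/2}k^{-1}\|v_h\|_{H^1_k(\Omega_i)}.
\]
The same trace inequality gives $\|w_h\|_{L^2(\gamma_i)}\lesssim H^{1/2}\|w_h\|_{H^1_{1/H}(\Omega_i)}$. Multiplying these and the factor $H^{-1}$ gives exactly $(kH)^{-1}\|v_h\|_{H^1_k(\Omega_i)}\|w_h\|_{H^1_{1/H}(\Omega_i)}$.

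The third step handles the layer terms, which produce $\sqrt{H/h}$. On each element $T\subset L_i$, $R_i^Tw_h$ is piecewise linear with nodal values equal to those of $w_h$ on $\gamma_i$ and zero elsewhere. By an inverse estimate and quasi-uniformity,
\[
\|R_i^Tw_h\|_{L^2(L_i)}\lesssim h^{1/2}\|w_h\|_{L^2(\gamma_i)},\qquad \|\nabla R_i^Tw_h\|_{L^2(L_i)}\lesssim h^{-1/2}\|w_h\|_{L^2(\gamma_i)}.
\]
Combined with $\|w_h\|_{L^2(\gamma_i)}\lesssim H^{1/2}\|w_h\|_{H^1_{1/H}(\Omega_i)}$, the gradient term is bounded by $\sqrt{H/h}\,\|v_h\|_{H^1_k(\Omega_i^h)}\|w_h\|_{H^1_{1/H}(\Omega_i)}$. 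The $k^2$ term is bounded by $k\sqrt{hH}$ times the same product, which is harmless.

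The remaining term is the physical-boundary part of the layer, which is nonempty only where $\gamma_i$ meets $\partial\Omega$. There I would use a discrete trace inequality on the layer elements to bound it by $k\,h^{1/2}\|v_h\|_{L^2(\partial\Omega\cap\partial L_i)}\|w_h\|_{L^2(\gamma_i)}$-type quantities, hence again by a lower-order multiple of $\sqrt{H/h}$.

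I expect the main obstacle to be the careful discrete estimate of $R_i^Tw_h$ in the layer, in particular the part touching $\partial\Omega$ near where $\gamma_i$ meets $\partial\Omega$. This needs a discrete trace/inverse argument relating nodal values on $\gamma_i$ to the $L^2(\gamma_i)$ norm, and it must not lose more than $h^{-1/2}$. Once this is in place, summing the three contributions and applying \eqref{eq:coercive_modified} with $w_h=e_h$ yields \eqref{eq:vhpar}.
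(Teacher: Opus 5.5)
Your proposal is correct and follows the paper's proof essentially step for step. The paper likewise tests the coercivity of $a_i$ with the local residual, obtains $(kH)^{-1}$ from the $\gamma_i$ term via the trace inequality \eqref{eq:trace}, and obtains $\sqrt{H/h}$ from the one-element layer $L_i=\Omega_i^h\setminus\Omega_i$ by a scaling estimate for $R_i^Tw_h$. You are in fact slightly more careful than the paper, whose identity \eqref{eq:lemadecomp0} drops the term $\mathrm{i}k(v_h,R_i^Tw_h)_{\partial\Omega\cap\partial L_i}$. A discrete trace/inverse estimate bounds that term by $kh^{-1/2}\|v_h\|_{L^2(L_i)}\|w_h\|_{L^2(\gamma_i)}\lesssim\sqrt{H/h}\,\|v_h\|_{H^1_k(\Omega_i^h)}\|w_h\|_{H^1_{1/H}(\Omega_i)}$. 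This is the same order as the gradient term, and the extra factor $h^{1/2}$ you wrote does not appear with those norms, but the conclusion is unaffected.
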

\begin{proof}
First, for any $w_h\in V_{h,i}$, a combination of the definition of $a_i(\cdot,\cdot)$ \eqref{eq:mod-sesqui} and the Schwarz inequality leads to
\begin{equation}\label{eq:lemadecomp0}\begin{aligned}
    |a_i(Q_iv_h-v_h,w_h)|&=|a(v_h,R_i^Tw_h)-a_i(v_h,w_h)|\\
    &=|(\nabla v_h,\nabla R_i^Tw_h)_{\Omega_i^h\backslash\Omega_i}-k^2(v_h,R_i^Tw_h)_{\Omega_i^h\backslash\Omega_i}-\mathrm{i}H^{-1}(v_h,w_h)_{\gamma_i}|\\
&\le\|v_h\|_{H^1_k(\Omega_i^h\backslash\Omega_i)}\|R_i^Tw_h\|_{H^1_k(\Omega_i^h\backslash\Omega_i)}+H^{-1}\|v_h\|_{L^2(\gamma_i)}\|w_h\|_{L^2(\gamma_i)}.
\end{aligned}\end{equation}
Next, we estimate the two terms on the right hand side of \eqref{eq:lemadecomp0}. For the first term in \eqref{eq:lemadecomp0}, a scaling argument gives
\[
\|R_i^Tw_h\|_{H^1_k(T)}\lesssim h^{-1/2}\|w_h\|_{L^2(\partial T\cap\textcolor{black}{\gamma_i})},\qquad\forall T\in\mathcal{T}_h \text{ with }\textcolor{black}{T\in\Omega_i^h\backslash\mathcal{T}_h(\Omega_i)}.
\]
Summing up all such $T$ over $\Omega_i^h\backslash\mathcal{T}_h(\Omega_i)$ and then using the trace inequality \eqref{eq:trace} on $\Omega_i$, we obtain
\[
\|R_i^Tw_h\|_{H^1_k(\Omega_i^h\backslash\Omega_i)}\lesssim h^{-1/2}\|w_h\|_{L^2(\gamma_i)}\lesssim\sqrt{H/h}\|w_h\|_{H^1_{1/H}(\Omega_i)}.
\]
For the second term in \eqref{eq:lemadecomp0}, the trace inequality yields
\[
\|v_h\|_{L^2(\gamma_i)}\|w_h\|_{L^2(\gamma_i)}\lesssim H\|v_h\|_{H^1_{1/H}(\Omega_i)}\|w_h\|_{H^1_{1/H}(\Omega_i)}\lesssim k^{-1}\|v_h\|_{H^1_k(\Omega_i)}\|w_h\|_{H^1_{1/H}(\Omega_i)}.
\]
Finally, choosing $w_h=Q_iv_h-v_h$ in \eqref{eq:lemadecomp0} and using the coercivity of $a_i$ \eqref{eq:coercive_modified}, we arrive at the desired assertion \eqref{eq:vhpar}.
\end{proof}

\begin{remark}
Alternatively, bounding the second term in \eqref{eq:lemadecomp0} with the trace inequality restricted to the layer $\Omega_i^h\backslash\Omega_i$ yields the cleaner estimate
\[
\|v_h-Q_iv_h\|_{H^1_{1/H}(\Omega_i)}\lesssim\sqrt{H/h}\|v_h\|_{H^1_{1/H}(\Omega_i^h\backslash\Omega_i)},\qquad\text{for }i=1,\dots,N.
\]
A similar result for the local Schwarz projection with the original impedance boundary condition was derived in \cite[Lemma 4.4]{MR4890903} with the factor $(kH)^{-3/2}(kh)^{-1/2}$, which may become large if the coarse mesh size $H$ is small.
\end{remark}

\section{Convergence analysis}\label{sec:converge}
In this section we prove that the error transfer operator $I-B_{\operatorname{hybrid}}A$ is contractive under very mild conditions on the coarse mesh size $H$ and the level parameter $\ell$.% see Theorem \ref{thm:wavelet-basedconv} and Corollary \ref{coro:main}. The proof proceeds in three steps. First, we establish \textit{a priori} estimates for functions in the local multiscale spaces $\mathcal{L}^{-1}_{h,i}(E_{\ell,i})$. Next, we derive the approximation properties of the coarse space $V_{h,0}$ to the target space $V^\partial_h$ of locally Helmholtz-harmonic discrete functions and to the solution of the adjoint problem, in Lemmas \ref{lema:last-app-harmonic} and \ref{lema:last}, respectively. Finally, we show the contractivity of $I-B_{\mathrm{hybrid}}A$ by the classical Schatz argument, combining G\aa rding's inequality \eqref{eq:garding} with the approximation property of $V_{h,0}$ to the solution of the adjoint problem.

We first introduce suitable weighted Sobolev spaces. Recall that $\|\cdot\|_{H^1_{1/H}(\Omega_i)}$ is defined in \eqref{def:local-energy}, then we introduce its weighted trace norm on $\gamma_i$ by
\[
\|v\|_{H^{1/2}_{1/H}(\gamma_i)}\coloneqq\inf_{\tilde{v}\in H^1(\Omega_i),\tilde{v}|_{\gamma_i}=v}\|\tilde{v}\|_{H^1_{1/H}(\Omega_i)},\qquad\forall v\in H^{1/2}(\gamma_i).
\]
Note that a scaling argument reveals the following equivalence relation $\|v\|_{H^{1/2}_{1/H}(\gamma_i)}\simeq |v|_{H^{1/2}(\gamma_i)}+H^{-1/2}\|v\|_{L^2(\gamma_i)}$. Let $H^{-1/2}_{1/H}(\gamma_i)\coloneqq [H^{1/2}_{1/H}(\gamma_i)]'$ be the dual space of $H^{1/2}_{1/H}(\gamma_i)$. By definition, the associated dual norm restricted to $L^2(\gamma_i)$ is
\[\|v\|_{H^{-1/2}_{1/H}(\gamma_i)}\coloneqq\sup_{w\in H^{1/2}(\gamma_i)\backslash\{0\}}\frac{(v,w)_{\gamma_i}}{\|w\|_{H^{1/2}_{1/H}(\gamma_i)}},\qquad\forall v\in L^2(\gamma_i).
\]
We also use the dual space $H^{-3/2}_*(\gamma_i)\coloneqq[H^{3/2}(\gamma_i)]'$ equiped with the norm
\[
\|v\|_{H^{-3/2}_*(\gamma_i)}\coloneqq\sup_{w\in H^2(\Omega_i)\backslash\{0\}}\frac{(v,w)_{\gamma_i}}{\|w\|_{H^2(\Omega_i)}},\qquad\forall v\in L^2(\gamma_i).
\]

\subsection{\textit{A priori} estimates at the discrete level}
%Let $f_i\in L^2(\Omega_i)$ and $g_i\in L^2(\partial\Omega_i)$, and assume that $v_h\in V_{h,i}$ is the solution of \eqref{eq:vh}. 
We first present an $L^2$-\emph{a priori} estimate for \eqref{eq:vh} involving much lower regularity assumptions on the boundary data, in sharp contrast to Lemma \ref{lema:modified}.
%Compared with Lemma \ref{lema:modified}, a weaker estimate is given in the following lemma with lower regularity requirements of the boundary data. This is the key to reducing the requirement of $\ell$.

\begin{lemma}\label{lema:vhL2}
Assume $kH\ll 1$, $f_i\in L^2(\Omega_i)$ and $g_i\in L^2(\partial\Omega_i)$. Let $v_h\in V_{h,i}$ be the solution of \eqref{eq:vh}. Then
\[
\|v_h\|_{L^2(\Omega_i)}\lesssim H^2\|f_i\|_{L^2(\Omega_i)}+H\|g_i\|_{H^{-1/2}(\Gamma_i)}+\|g_i\|_{H^{-3/2}_*(\gamma_i)}+h\|g_i\|_{H^{-1/2}_{1/H}(\gamma_i)}.
\]
\end{lemma}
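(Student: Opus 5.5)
This is a duality (Aubin--Nitsche) argument carried out at the discrete level. Let $z\in H^1(\Omega_i)$ solve the adjoint problem
\[
a_i(w,z)=(w,v_h)_{\Omega_i}\qquad\forall w\in H^1(\Omega_i),
\]
which is uniquely solvable by Lemma \ref{lema:modified}. The sesquilinear form $a_i$ has the same structure after conjugation, so \eqref{eq:uH1_modified} and \eqref{eq:uH2_modified} apply to $z$ with $f_i=v_h$ and $g_i=0$. This gives
\[
\|z\|_{H^1_{1/H}(\Omega_i)}\lesssim H\|v_h\|_{L^2(\Omega_i)},\qquad \|z\|_{H^2(\Omega_i)}\lesssim\|v_h\|_{L^2(\Omega_i)}.
\]
Let $z_h\in V_{h,i}$ be the Galerkin approximation of $z$, i.e.\ $a_i(w_h,z_h)=(w_h,v_h)_{\Omega_i}$ for all $w_h\in V_{h,i}$. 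Testing with $w_h=v_h$ and using \eqref{eq:vh} with test function $z_h$, I obtain
\[
\|v_h\|_{L^2(\Omega_i)}^2=a_i(v_h,z_h)=(f_i,z_h)_{\Omega_i}+(g_i,z_h)_{\Gamma_i}+(g_i,z_h)_{\gamma_i}.
\]
Each term is then estimated separately.

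\textbf{Volume and physical-boundary terms.} For the first term,
\[
|(f_i,z_h)_{\Omega_i}|\le\|f_i\|_{L^2}\|z_h\|_{L^2}\lesssim H\|f_i\|_{L^2}\|z_h\|_{H^1_{1/H}}\lesssim H^2\|f_i\|_{L^2}\|v_h\|_{L^2},
\]
since Céa's lemma gives $\|z_h\|_{H^1_{1/H}}\lesssim\|z\|_{H^1_{1/H}}$. For the $\Gamma_i$ term I use the $H^{-1/2}$--$H^{1/2}$ duality:
\[
|(g_i,z_h)_{\Gamma_i}|\le\|g_i\|_{H^{-1/2}(\Gamma_i)}\|z_h\|_{H^{1/2}(\Gamma_i)}.
\]
The $H^{1/2}$ trace norm is bounded by $\|z_h\|_{H^1(\Omega_i)}$, with the $L^2$ part absorbed at the $H^{-1}$ scale. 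This gives $\|z_h\|_{H^{1/2}(\Gamma_i)}\lesssim\|z_h\|_{H^1_{1/H}}\lesssim H\|v_h\|_{L^2}$. Some care is needed because the unweighted $H^{1/2}$ norm contains $\|z_h\|_{L^2(\Gamma_i)}\lesssim H^{1/2}\|z_h\|_{H^1_{1/H}}$, which is harmless for $H\lesssim1$.

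\textbf{Interior-boundary term.} This is the key step, and it is where the lower regularity of $g_i$ on $\gamma_i$ enters. I split $z_h=z+(z_h-z)$. Then
\[
|(g_i,z)_{\gamma_i}|\le\|g_i\|_{H^{-3/2}_*(\gamma_i)}\|z\|_{H^2(\Omega_i)}\lesssim\|g_i\|_{H^{-3/2}_*(\gamma_i)}\|v_h\|_{L^2},
\]
which is exactly why the $H^{-3/2}_*$ norm is defined through $H^2(\Omega_i)$ extensions. For the remainder,
\[
|(g_i,z_h-z)_{\gamma_i}|\le\|g_i\|_{H^{-1/2}_{1/H}(\gamma_i)}\|z-z_h\|_{H^{1/2}_{1/H}(\gamma_i)}\le\|g_i\|_{H^{-1/2}_{1/H}(\gamma_i)}\|z-z_h\|_{H^1_{1/H}(\Omega_i)},
\]
where the last inequality follows from the infimum definition of the weighted trace norm, taking $z-z_h$ itself as an extension. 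Lemma \ref{lema:uerrH1_modified}, applied to the adjoint problem (same proof), gives $\|z-z_h\|_{H^1_{1/H}}\lesssim h\|z\|_{H^2}\lesssim h\|v_h\|_{L^2}$. Summing the three bounds and dividing by $\|v_h\|_{L^2(\Omega_i)}$ yields the claim.

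The main obstacle is the $\gamma_i$ term. A naive bound through $\|z_h\|_{H^1_{1/H}}$ would give only $H\|g_i\|_{H^{-1/2}_{1/H}}$. The improvement comes from splitting off the continuous adjoint $z$, whose $H^2$ bound is independent of $H$, and confining the discrete defect to an $O(h)$ contribution. I would also check that the $\Gamma_i$ estimate is consistent with the unweighted $H^{-1/2}(\Gamma_i)$ norm as stated; if not, the same splitting applies there, or $\|\cdot\|_{H^{1/2}(\Gamma_i)}$ is controlled directly by the full $H^1$ norm of $z_h$.
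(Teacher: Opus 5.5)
Your proof is correct and is essentially the paper's argument. Both use the same continuous and discrete adjoint pair $z,z_h$ and the identity $\|v_h\|_{L^2(\Omega_i)}^2=a_i(v_h,z_h)$. Both bound $(g_i,z)_{\gamma_i}$ by $\|g_i\|_{H^{-3/2}_*(\gamma_i)}\|z\|_{H^2(\Omega_i)}$ and bound $(g_i,z_h-z)_{\gamma_i}$ by $h\|g_i\|_{H^{-1/2}_{1/H}(\gamma_i)}\|z\|_{H^2(\Omega_i)}$ through the weighted trace norm. The only difference is that the paper splits $z_h=z+(z_h-z)$ in all terms, whereas you split only on $\gamma_i$. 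You bound the $f_i$ and $\Gamma_i$ terms directly through the discrete stability $\|z_h\|_{H^1_{1/H}(\Omega_i)}\lesssim H\|v_h\|_{L^2(\Omega_i)}$, which is equally valid and slightly more economical.
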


\begin{proof}
First, let $z\in H^1(\Omega_i)$ be the solution of the continuous adjoint problem with right-hand side $v_h$, namely
\[
a_i(w,z)=(w,v_h)_{\Omega_i},\qquad\forall w\in H^1(\Omega_i).
\]
Then the assumption $kH\ll 1$, together with the adjoint regularity estimates \eqref{eq:uH1_modified} and \eqref{eq:uH2_modified}, implies
\begin{equation}\label{eq:zH2}
H^{-1}\|z\|_{H^1_{1/H}(\Omega_i)}+\|z\|_{H^2(\Omega_i)}\lesssim\|v_h\|_{L^2(\Omega_i)}.
\end{equation}
Next, let $z_h\in V_{h,i}$ be the discrete adjoint solution defined by
\begin{equation}\label{eq:zh}
a_i(w_h,z_h)=(w_h,v_h)_{\Omega_i},\qquad \forall w_h\in V_{h,i}.
\end{equation}
Then we obtain from the definition of $\|\cdot\|_{1/H}^{1/2}(\gamma_i)$ and Lemma \ref{lema:uerrH1_modified} that
\begin{equation}\label{eq:zerrL2}
\|z-z_h\|_{H^{1/2}_{1/H}(\gamma_i)}\le\|z-z_h\|_{H^1_{1/H}(\Omega_i)}\lesssim h\|z\|_{H^2(\Omega_i)},
\end{equation}
Now, by taking $w_h=v_h$ in \eqref{eq:zh} and $w_h=z_h$ in \eqref{eq:vh}, we derive
\begin{equation}\label{eq:lemavhL2}\begin{aligned}
\|v_h\|_{L^2(\Omega_i)}^2&=a_i(v_h,z_h)=(f_i,z_h)_{\Omega_i}+(g_i,z_h)_{\partial\Omega_i}\\
&=(f_i,z)_{\Omega_i}+(g_i,z)_{\partial\Omega_i}+(f_i,z_h-z)_{\Omega_i}+(g_i,z_h-z)_{\partial\Omega_i}.
\end{aligned}\end{equation}
We then estimate these four terms in \eqref{eq:lemavhL2}. 
For the first two terms, an application of the Schwarz inequality and the trace theorem leads to
\[
|(f_i,z)_{\Omega_i}+(g_i,z)_{\partial\Omega_i}|\le\|f_i\|_{L^2(\Omega_i)}\|z\|_{L^2(\Omega_i)}+\|g_i\|_{H^{-1/2}(\Gamma_i)}\|z\|_{H^1(\Omega_i)}+\|g_i\|_{H^{-3/2}_*(\gamma_i)}\|z\|_{H^2(\Omega_i)}.
\]
Similarly, we derive the estimate of the last two terms, 
\begin{align*}
|(f_i,z_h-z)_{\Omega_i}+(g_i,z_h-z)_{\partial\Omega_i}|&\le\|z-z_h\|_{H^1_{1/H}(\Omega_i)}
\Bigl(H\|f_i\|_{L^2(\Omega_i)}+\|g_i\|_{H^{-1/2}(\Gamma_i)}+\|g_i\|_{H^{-1/2}_{1/H}(\gamma_i)}\Bigr)\\
&\lesssim h\|z\|_{H^2(\Omega_i)}\Bigl(H\|f_i\|_{L^2(\Omega_i)}+\|g_i\|_{H^{-1/2}(\Gamma_i)}+\|g_i\|_{H^{-1/2}_{1/H}(\gamma_i)}\Bigr),
\end{align*}
where we used the finite element error estimate \eqref{eq:zerrL2} in the last inequality.

Inserting the two bounds into \eqref{eq:lemavhL2}, we obtain
\begin{align*}
\|v_h\|_{L^2(\Omega_i)}^2&\lesssim\Bigl(\|z\|_{H^1_{1/H}(\Omega_i)}+h\|z\|_{H^2(\Omega_i)}\Bigr)\Bigl(H\|f_i\|_{L^2(\Omega_i)}+\|g_i\|_{H^{-1/2}(\Gamma_i)}\Bigr)\\
&\qquad+\|z\|_{H^2(\Omega_i)}\Bigl(\|g_i\|_{H^{-3/2}_*(\gamma_i)}+h\|g_i\|_{H^{-1/2}_{1/H}(\gamma_i)}\Bigr).
\end{align*}
Finally, applying the $H^1$-adjoint and the $H^2$-adjoint regularity estimate \eqref{eq:zH2}, and dividing by $\|v_h\|_{L^2(\Omega_i)}$ yields the desired estimate.
\end{proof}
Next, we present a weighted $H^1_k$-{\em a priori} estimate for \eqref{eq:vh}.
\begin{lemma}\label{lema:vhH1}
Under assumption $\delta\le H\ll k^{-1}$, $f_i\in L^2(\Omega_i)$ and $g_i\in L^2(\partial\Omega_i)$. Let $v_h\in V_{h,i}$ be the solution of \eqref{eq:vh}. Then
\[
\delta\|\chi_iv_h\|_{H^1_k(\Omega_i)}\lesssim H^2\|f_i\|_{L^2(\Omega_i)}+H\|g_i\|_{H^{-1/2}(\Gamma_i)}+\|g_i\|_{H^{-3/2}_*(\gamma_i)}+h\|g_i\|_{H^{-1/2}_{1/H}(\gamma_i)}.
\]
\end{lemma}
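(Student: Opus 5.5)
We derive this weighted $H^1_k$ estimate from the $L^2$ estimate of Lemma \ref{lema:vhL2} by a Caccioppoli-type energy argument localized with $\chi_i$. The cutoff exists to kill the boundary term on $\gamma_i$, where only the weak norms of $g_i$ are available.

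\textbf{Step 1: test with $\chi_i^2 v_h$.} We test \eqref{eq:vh} with $w_h=\Pi_h(\chi_i^2 v_h)\in V_{h,i}$. We would also use that $\chi_i$ vanishes on $\gamma_i$, as holds for the partition of unity, since $\chi_i$ is supported in $\overline\Omega_i$ and continuous on $\Omega$. Then $(g_i,w_h)_{\partial\Omega_i}$ reduces to $(g_i,w_h)_{\Gamma_i}$, and the impedance term $\mathrm{i}H^{-1}(v_h,w_h)_{\gamma_i}$ disappears. First ignore the interpolation. Taking real parts, the identity $\nabla v\cdot\nabla(\chi^2\bar v)=|\nabla(\chi v)|^2-|\nabla\chi|^2|v|^2+2\mathrm{i}\chi\Im(\bar v\nabla v)\cdot\nabla\chi$ (the last term is purely imaginary) gives
\[
\|\nabla(\chi_iv_h)\|_{L^2(\Omega_i)}^2-k^2\|\chi_iv_h\|_{L^2(\Omega_i)}^2=\Re\bigl[(f_i,\chi_i^2v_h)_{\Omega_i}+(g_i,\chi_i^2v_h)_{\Gamma_i}\bigr]+\|v_h\nabla\chi_i\|_{L^2(\Omega_i)}^2 .
\]
We add $2k^2\|\chi_iv_h\|^2_{L^2}$ to both sides and use $\|\nabla\chi_i\|_\infty\lesssim\delta^{-1}$ and $k\lesssim H^{-1}\le\delta^{-1}$. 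This yields
\[
\|\chi_iv_h\|_{H^1_k(\Omega_i)}^2\lesssim \delta^{-2}\|v_h\|_{L^2(\Omega_i)}^2+\|f_i\|\,\|\chi_i^2v_h\|+|(g_i,\chi_i^2v_h)_{\Gamma_i}|.
\]

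\textbf{Step 2: bound the source terms.} For the volume term, $\|f_i\|\,\|v_h\|\le \delta^{-2}\|v_h\|^2+\delta^2\|f_i\|^2$, and $\delta^2\|f_i\|^2\le \delta^{-2}H^4\|f_i\|^2$. For the $\Gamma_i$ term, duality gives $\|g_i\|_{H^{-1/2}(\Gamma_i)}\|\chi_i^2 v_h\|_{H^1(\Omega_i)}$. Here we use
\[
\|\chi_i^2v_h\|_{H^1}\lesssim \|\chi_i v_h\|_{H^1}+\delta^{-1}\|v_h\|,
\]
and the $L^2$ part of the $H^1$ norm is harmless since $H\lesssim 1$. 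Young's inequality then absorbs $\|\chi_iv_h\|^2_{H^1_k}$. The leftover $\|g_i\|^2_{H^{-1/2}(\Gamma_i)}$ must be multiplied by $\delta^2$, and $\delta^2\le H^2$, so it is consistent with the target term $H\|g_i\|$ after dividing by $\delta^2$. Altogether,
\[
\delta\|\chi_iv_h\|_{H^1_k(\Omega_i)}\lesssim \|v_h\|_{L^2(\Omega_i)}+H^2\|f_i\|+H\|g_i\|_{H^{-1/2}(\Gamma_i)}.
\]
Inserting Lemma \ref{lema:vhL2} for $\|v_h\|_{L^2(\Omega_i)}$ gives the claimed bound.

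\textbf{Main obstacle: the interpolation error.} The test function must be discrete, so the argument has to handle $\chi_i^2v_h-\Pi_h(\chi_i^2v_h)$. Its contribution is
\[
a_i\bigl(v_h,(I-\Pi_h)(\chi_i^2v_h)\bigr)-(f_i+g_i,\dots).
\]
Because $v_h$ is piecewise linear, we would use superapproximation: $\|(I-\Pi_h)(\chi_i^2v_h)\|_{H^1(T)}\lesssim h\delta^{-1}\|v_h\|_{H^1(T)}+h\delta^{-2}\|v_h\|_{L^2(T)}$, which holds since $\nabla^2 v_h=0$ on $T$. This vanishing bound also holds on $\gamma_i$, so no $\gamma_i$ terms are created. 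The resulting terms must then be controlled by $\delta^{-1}\|\chi_iv_h\|_{H^1}$ plus $\|v_h\|_{L^2}$, which is the delicate point. The gradient is only controlled where $\chi_i$ is large, so we would apply the estimate with a slightly enlarged cutoff, or use an inverse estimate $h\|\nabla v_h\|\lesssim\|v_h\|$ to trade $h\delta^{-1}\|\nabla v_h\|$ for $\delta^{-1}\|v_h\|_{L^2}$. The inverse-estimate route is what I would try first: it directly yields terms of the form $\delta^{-2}\|v_h\|^2_{L^2}$, which are already present in Step 1.
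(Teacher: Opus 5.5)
Your Steps 1--2 are the paper's proof: test \eqref{eq:vh} with $\Pi_h(\chi_i^2v_h)$, which vanishes on $\gamma_i$, take real parts, and absorb by Young's inequality to get $\delta\|\chi_iv_h\|_{H^1_k(\Omega_i)}\lesssim\|v_h\|_{L^2(\Omega_i)}+H^2\|f_i\|_{L^2(\Omega_i)}+H\|g_i\|_{H^{-1/2}(\Gamma_i)}$. Then insert Lemma~\ref{lema:vhL2}.

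The gap is in the interpolation step you flagged, and the inverse-estimate route you would try first does not close. Write $\eta:=(I-\Pi_h)(\chi_i^2v_h)$. It is supported on the layer of elements where $\nabla\chi_i\neq0$, and across this layer $\chi_i$ decays to $0$. Trading $h\delta^{-1}\|\nabla v_h\|$ for $\delta^{-1}\|v_h\|$ gives $\|\nabla\eta\|_{L^2(\Omega_i)}\lesssim\delta^{-1}\|v_h\|_{L^2(\Omega_i)}$. The critical term is then only bounded by
\[
|(\nabla v_h,\nabla\eta)_{\Omega_i}|\lesssim\delta^{-1}\|\nabla v_h\|_{L^2(\mathrm{supp}\,\eta)}\|v_h\|_{L^2(\Omega_i)}.
\]
This unweighted gradient on the layer is controlled neither by $\|\chi_i\nabla v_h\|_{L^2(\Omega_i)}$ (since $\chi_i$ is small there) nor by $\delta^{-1}\|v_h\|_{L^2(\Omega_i)}$. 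The energy estimate would bring back $\|g_i\|_{L^2(\gamma_i)}$, which this lemma must avoid. Applying the inverse inequality to this factor as well gives $(h\delta)^{-1}\|v_h\|_{L^2(\Omega_i)}^2$, a loss of $\sqrt{\delta/h}$ relative to the claim. So the assertion that this route directly yields $\delta^{-2}\|v_h\|^2_{L^2(\Omega_i)}$ is not justified.

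The missing idea, which the paper uses, is to keep the weight $\chi_i$ in the superapproximation bound and put the inverse inequality on the \emph{other} factor of the pairing. On each $T\in\mathcal{T}_h$ both $\chi_i$ and $v_h$ are affine, so
\[
\nabla^2(\chi_i^2v_h)=2v_h\nabla\chi_i\otimes\nabla\chi_i+2\chi_i\bigl(\nabla\chi_i\otimes\nabla v_h+\nabla v_h\otimes\nabla\chi_i\bigr)\quad\text{on }T.
\]
Hence $\|\nabla\eta\|_{L^2(T)}\lesssim h\delta^{-1}\bigl(\|\chi_i\nabla v_h\|_{L^2(T)}+\delta^{-1}\|v_h\|_{L^2(T)}\bigr)$; your version with $\|v_h\|_{H^1(T)}$ throws away exactly this factor $\chi_i$. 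Combining this with $\|\nabla v_h\|_{L^2(T)}\lesssim h^{-1}\|v_h\|_{L^2(T)}$, the powers of $h$ cancel:
\[
|(\nabla v_h,\nabla\eta)_{\Omega_i}|\lesssim\delta^{-1}\|v_h\|_{L^2(\Omega_i)}\bigl(\|\chi_i\nabla v_h\|_{L^2(\Omega_i)}+\delta^{-1}\|v_h\|_{L^2(\Omega_i)}\bigr).
\]
By Young's inequality this is at most $\frac14\|\chi_i\nabla v_h\|_{L^2(\Omega_i)}^2+C\delta^{-2}\|v_h\|_{L^2(\Omega_i)}^2$. It is absorbed as in your Step 1, since $\|\chi_i\nabla v_h\|\lesssim\|\nabla(\chi_iv_h)\|+\delta^{-1}\|v_h\|$; no enlarged cutoff is needed.

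The remaining interpolation terms are $k^2(v_h,\eta)_{\Omega_i}$, $\mathrm{i}k(v_h,\eta)_{\Gamma_i}$, and the pairings of $f_i$ and $g_i$ with $\eta$. Even with your cruder bound plus the inverse estimate, using $h\le\delta$ and $k\lesssim\delta^{-1}$, they are bounded by $\delta^{-2}\|v_h\|_{L^2(\Omega_i)}^2$ plus data terms already present. With this repair the rest of your argument goes through.
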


\begin{proof}
Choose $w_h=\Pi_h(\chi_i^2v_h)\in V_{h,i}^0$ in \eqref{eq:vh}, then
\begin{align*}
(f_i&,\Pi_h(\chi_i^2v_h))_{\Omega_i}+(g_i,\Pi_h(\chi_i^2v_h))_{\Gamma_i}=a_i(v_h,\Pi_h(\chi_i^2v_h))=(\nabla v_h,\nabla(\Pi_h-I)(\chi_i^2v_h))_{\Omega_i}\\
&\qquad+(\nabla v_h,\chi_i^2\nabla v_h+2\chi_iv_h\nabla\chi_i)_{\Omega_i}-k^2(v_h,\Pi_h(\chi_i^2v_h))_{\Omega_i}+\mathrm{i}k(v_h,\Pi_h(\chi_i^2v_h))_{\Gamma_i}.
\end{align*}
Taking the real part of both sides, we obtain
\begin{equation}\label{eq:lemavhH10}
\begin{aligned}
\|\chi_i\nabla v_h\|_{L^2(\Omega_i)}^2&=\Re\Bigl((\nabla v_h,\nabla(I-\Pi_h)(\chi_i^2v_h))_{\Omega_i}-2(\chi_i\nabla v_h,v_h\nabla\chi_i)_{\Omega_i}\\
&\qquad+(f_i+k^2v_h,\Pi_h(\chi_i^2v_h))_{\Omega_i}+(g_i-\mathrm{i}kv_h,\Pi_h(\chi_i^2v_h))_{\Gamma_i}\Bigr).
\end{aligned}
\end{equation}
Next, we estimate the four terms on the right hand side of \eqref{eq:lemavhH10}.
An application of the inverse inequality for $v_h$ and the approximation property \eqref{approxi} of $\Pi_h$ gives the estimate of the first two terms,
\begin{align*}
&\quad|(\nabla v_h,\nabla(I-\Pi_h)(\chi_i^2v_h))_{\Omega_i}-2(\chi_i\nabla v_h,v_h\nabla\chi_i)_{\Omega_i}|\\&\lesssim\|v_h\|_{L^2(\Omega_i)}\|\nabla^2_h(\chi_i^2v_h)\|_{L^2(\Omega_i)}+\|\chi_i\nabla v_h\|_{L^2(\Omega_i)}\|v_h\nabla\chi_i\|_{L^2(\Omega_i)}\\
&\lesssim\|v_h\|_{L^2(\Omega_i)}\|\nabla\chi_i\|_{L^\infty(\Omega_i)}\Bigl(\|\chi_i\nabla v_h\|_{L^2(\Omega_i)}+\|\nabla\chi_i\|_{L^\infty(\Omega_i)}\|v_h\|_{L^2(\Omega_i)}\Bigr)\\
&\le\frac{1}{4}\|\chi_i\nabla v_h\|_{L^2(\Omega_i)}^2+\frac{C}{\delta^2}\|v_h\|_{L^2(\Omega_i)}^2,
\end{align*}
where the second-order derivatives for $\chi_i$ and $v_h$ vanish since $\chi_i,v_h\in V_{h,i}$ and $\nabla^2_h$ denotes the piecewise Hessian operator over $\mathcal{T}_h$.

To estimate the last two terms in \eqref{eq:lemavhH10}, we first apply the stability \eqref{eq:Pihstability} of $\Pi_h$ and the dualities to derive
\begin{align*}
&\quad|(f_i+k^2v_h,\Pi_h(\chi_i^2v_h))_{\Omega_i}+(g_i-\mathrm{i}kv_h,\Pi_h(\chi_i^2v_h))_{\Gamma_i}|\\&\lesssim \|\chi_i^2v_h\|_{H^1_{1/H}(\Omega_i)}\times\Bigl(H\|f_i+k^2v\|_{L^2(\Omega_i)}+\|g_i-\mathrm{i}kv_h\|_{H^{-1/2}(\Gamma_i)}\Bigr)\\
&\lesssim\Bigl(\|\chi_i\nabla v_h\|_{L^2(\Omega_i)}+\delta^{-1}\|v_h\|_{L^2(\Omega_i)}\Bigr)\Bigl(H\|f_i\|_{L^2(\Omega_i)}+\|g_i\|_{H^{-1/2}(\Gamma_i)}+k\|v_h\|_{L^2(\Omega_i)}\Bigr)\\
&\le\frac{1}{4}\|\chi_i\nabla v_h\|_{L^2(\Omega_i)}^2+C\Bigl((\delta^{-2}+k^2)\|v_h\|_{L^2(\Omega_i)}^2+H^2\|f_i\|_{L^2(\Omega_i)}^2+\|g_i\|_{H^{-1/2}(\Gamma_i)}^2\Bigr).
\end{align*}

Substituting the above estimates into \eqref{eq:lemavhH10}, we obtain the discrete Caccioppoli-type inequality 
\[
\|\chi_i\nabla v_h\|_{L^2(\Omega_i)}\lesssim(\delta^{-1}+k)\|v_h\|_{L^2(\Omega_i)}+H\|f_i\|_{L^2(\Omega_i)}+\|g_i\|_{H^{-1/2}(\Gamma_i)}.
\]
Therefore, by definition,
\begin{align*}
\|\chi_iv_h\|_{H^1_k(\Omega_i)}&\le\|\chi_i\nabla v_h\|_{L^2(\Omega_i)}+\|v_h\nabla\chi_i\|_{L^2(\Omega_i)}+k\|\chi_iv_h\|_{L^2(\Omega_i)}\\
&\lesssim(\delta^{-1}+k)\|v_h\|_{L^2(\Omega_i)}+H\|f_i\|_{L^2(\Omega_i)}+\|g_i\|_{H^{-1/2}(\Gamma_i)}.
\end{align*}
Substituting the $L^2$-bound for $v_h$ from Lemma \ref{lema:vhL2} into this inequality yields the desired estimate under the assumption $\delta\le H\ll k^{-1}$.
\end{proof}

Conversely, for every $v_h\in V_{h,i}^\partial$, one can determine a boundary datum $g_h\in V_{h,i}|_{\gamma_i}$ such that
\begin{equation}\label{eq:gh}
    (g_h,w_h)_{\gamma_i}=a_i(v_h,w_h),\qquad \forall w_h\in V_{h,i}|_{\gamma_i},
\end{equation}
see \cite[Proposition 4.1]{MR4890903}. Its well-posedness is established with the aid of a finite element lifting and of the projection onto $V_{h,i}|_{\gamma_i}$.

\begin{lemma}\label{lema:lift}
For any $v\in H^{1/2}(\gamma_i)$, there exists a lifting $E_{h,i}v\in V_{h,i}$ such that 
\[
\|E_{h,i}v\|_{H^1_{1/H}(\Omega_i)}+h^{-1/2}\|v-E_{h,i}v\|_{L^2(\gamma_i)}\lesssim\|v\|_{H^{1/2}_{1/H}(\gamma_i)}.
\]
Moreover, if $v\in V_{h,i}|_{\gamma_i}$, then $E_{h,i}v=v$ on $\gamma_i$.
\end{lemma}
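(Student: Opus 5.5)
The plan is to build $E_{h,i}$ in two stages: first take the continuous minimal extension, then apply a quasi-interpolant that reproduces discrete boundary traces.

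Given $v\in H^{1/2}(\gamma_i)$, the definition of $\|\cdot\|_{H^{1/2}_{1/H}(\gamma_i)}$ as an infimum (attained, or attained up to a factor $2$) gives $\tilde v\in H^1(\Omega_i)$ with
\[
\tilde v|_{\gamma_i}=v,\qquad \|\tilde v\|_{H^1_{1/H}(\Omega_i)}\le 2\|v\|_{H^{1/2}_{1/H}(\gamma_i)}.
\]
We then set $E_{h,i}v:=I_h\tilde v$, where $I_h:H^1(\Omega_i)\to V_{h,i}$ is a Scott--Zhang quasi-interpolant. Its averaging facets for nodes on $\partial\Omega_i$ are chosen to lie on $\partial\Omega_i$, and for nodes on $\gamma_i$ to lie on $\gamma_i$; at the junction between $\gamma_i$ and $\Gamma_i$ we pick facets in $\gamma_i$.

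Three standard properties of Scott--Zhang are used.
\begin{itemize}
\item Local $H^1$ stability: $\|\nabla I_h\tilde v\|_{L^2(T)}\lesssim \|\nabla\tilde v\|_{L^2(\omega_T)}$ and $\|I_h\tilde v\|_{L^2(T)}\lesssim\|\tilde v\|_{L^2(\omega_T)}+h\|\nabla\tilde v\|_{L^2(\omega_T)}$.
\item Local approximation: $\|\tilde v-I_h\tilde v\|_{L^2(T)}\lesssim h\|\nabla\tilde v\|_{L^2(\omega_T)}$.
\item Trace preservation: because of the facet choice, the trace of $I_h\tilde v$ on $\gamma_i$ depends only on $v=\tilde v|_{\gamma_i}$, and it reproduces piecewise linear boundary data.
\end{itemize}

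Summing the first property with finite overlap gives
\[
\|E_{h,i}v\|_{H^1_{1/H}(\Omega_i)}\lesssim\|\nabla\tilde v\|+(H^{-1}+h/H^{-1}\cdot H^{-2})\dots
\]
More simply, since $h\le H$ the term $H^{-1}h\|\nabla\tilde v\|$ is bounded by $\|\nabla\tilde v\|$, so
\[
\|E_{h,i}v\|_{H^1_{1/H}(\Omega_i)}\lesssim\|\tilde v\|_{H^1_{1/H}(\Omega_i)}.
\]

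For the boundary term, use the element trace inequality on each $T$ touching $\gamma_i$:
\[
\|w\|_{L^2(\partial T)}^2\lesssim h^{-1}\|w\|_{L^2(T)}^2+h\|\nabla w\|_{L^2(T)}^2,\qquad w=\tilde v-I_h\tilde v .
\]
Combined with the approximation property and $H^1$ stability, this gives
\[
\|v-E_{h,i}v\|_{L^2(\gamma_i)}^2\lesssim h\|\nabla\tilde v\|_{L^2(\Omega_i)}^2 .
\]
Hence $h^{-1/2}\|v-E_{h,i}v\|_{L^2(\gamma_i)}\lesssim\|v\|_{H^{1/2}_{1/H}(\gamma_i)}$, which is the second estimate.

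Finally, if $v\in V_{h,i}|_{\gamma_i}$, trace preservation of the boundary-adapted Scott--Zhang operator gives $(I_h\tilde v)|_{\gamma_i}=v$ directly. Alternatively, one can first take any discrete lifting $\tilde v\in V_{h,i}$ and use that $I_h$ is a projection onto $V_{h,i}$.

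The main obstacle is the precise choice of facets at points where $\gamma_i$ meets $\Gamma_i$. These facets must lie in $\gamma_i$ so that the trace on $\gamma_i$ is determined by $v$ alone and is reproduced exactly, while the construction must keep shape-regular patches so that the local estimates above remain uniform. The first thing to try is the standard Scott--Zhang prescription that boundary nodes use boundary facets, with the preference given to $\gamma_i$ at these junction nodes.
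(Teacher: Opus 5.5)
Your proposal is correct and follows the paper's proof. Both take a near-minimizing extension $\tilde v$ of $v$, apply a Scott--Zhang interpolant that preserves discrete traces on $\gamma_i$, get stability in $H^1_{1/H}(\Omega_i)$ from $h\le H$, and apply a trace inequality to $\tilde v-I_h\tilde v$ to obtain the factor $\sqrt{h}$. The differences are cosmetic: you use the element-wise trace inequality instead of the global one \eqref{eq:trace}, and you spell out the facet choice where $\gamma_i$ meets $\Gamma_i$, which the paper leaves implicit. You should also delete the garbled intermediate display just before ``More simply''.
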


\begin{proof}
Let $I_{h,i}:H^1(\Omega_i)\to V_{h,i}$ be the Scott--Zhang interpolation operator \cite{ScottZhang:1990},
\[
\|\nabla I_{h,i}w\|_{L^2(\Omega_i)}+h^{-1}\|w-I_{h,i}w\|_{L^2(\Omega_i)}\lesssim\|\nabla w\|_{L^2(\Omega_i)},\qquad\forall w\in H^1(\Omega_i),
\]
and $I_{h,i}$ preserves the trace whenever $w|_{\gamma_i}\in V_{h,i}|_{\gamma_i}$.

By definition of the weighted trace norm, there exists $\tilde{v}\in H^1(\Omega_i)$ with $\tilde{v}|_{\gamma_i}=v$ and $\|\tilde{v}\|_{H^1_{1/H}(\Omega_i)}\le2\|v\|_{H^{1/2}_{1/H}(\gamma_i)}$. Setting $E_{h,i}v:=I_{h,i}\tilde{v}$ and using $h\le H$, we obtain
\[
\|E_{h,i}v\|_{H^1_{1/H}(\Omega_i)}\le\|\tilde{v}\|_{H^1_{1/H}(\Omega_i)}+\|\tilde{v}-I_{h,i}\tilde{v}\|_{H^1_{1/H}(\Omega_i)}\lesssim\|\tilde{v}\|_{H^1_{1/H}(\Omega_i)}\lesssim\|v\|_{H^{1/2}_{1/H}(\gamma_i)},
\]
and, by the trace inequality \eqref{eq:trace},
\[
\|v-E_{h,i}v\|_{L^2(\gamma_i)}\lesssim h^{-1/2}\|\tilde{v}-I_{h,i}\tilde{v}\|_{L^2(\Omega_i)}+\sqrt{h}\|\nabla(\tilde{v}-I_{h,i}\tilde{v})\|_{L^2(\Omega_i)}\lesssim\sqrt{h}\|v\|_{H^{1/2}_{1/H}(\gamma_i)}.
\]
Combining the last two inequalities proves the first assertion. Moreover, the trace-preserving property of $I_{h,i}$ gives $E_{h,i}v=\tilde{v}=v$ on $\gamma_i$ whenever $v\in V_{h,i}|_{\gamma_i}$.
\end{proof}

Let $P_{h,i}:L^2(\gamma_i)\to V_{h,i}|_{\gamma_i}$ be the $L^2(\gamma_i)$-orthogonal projection,
\[
(P_{h,i}v,w_h)_{\gamma_i}=(v,w_h)_{\gamma_i},\qquad \forall w_h\in V_{h,i}|_{\gamma_i},
\]
and let $\overline{P}_{h,i}:L^2(\gamma_i)\to V_{h,i}|_{\gamma_i}$ be defined by duality,
\[
(w_h,\overline{P}_{h,i}v)_{\gamma_i}=(w_h,v)_{\gamma_i},\qquad \forall w_h\in V_{h,i}|_{\gamma_i}.
\]
Then $P_{h,i}$ and $\overline{P}_{h,i}$ have analogous stability and approximation properties.
\begin{lemma}\label{lema:Phi}
The following two statements hold. 
\begin{enumerate}
\item $P_{h,i}$ is stable in the weighted norms in the sense that
\[
\|P_{h,i}v\|_{H^s_{1/H}(\gamma_i)}\lesssim\|v\|_{H^{s}_{1/H}(\gamma_i)},\qquad \forall v\in H^s(\gamma_i)\cap L^2(\gamma_i)\text{ with } s=\pm1/2.
\]
\item The negative-norm approximation estimate
\begin{equation}\label{eq:H-32err}
\|v-P_{h,i}v\|_{H^{-3/2}_*(\gamma_i)}\lesssim h\|v\|_{H^{-1/2}_{1/H}(\gamma_i)}\qquad\forall v\in L^2(\gamma_i).
\end{equation}
\end{enumerate}
\end{lemma}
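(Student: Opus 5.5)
We will establish the $s=1/2$ stability first, deduce $s=-1/2$ by duality, and then prove the negative-norm estimate \eqref{eq:H-32err} by a duality argument combined with the approximation properties of $P_{h,i}$.

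\emph{Stability for $s=1/2$.} Let $v\in H^{1/2}(\gamma_i)$. We use the lifting of Lemma \ref{lema:lift} and write $w_h:=(E_{h,i}v)|_{\gamma_i}\in V_{h,i}|_{\gamma_i}$. Lemma \ref{lema:lift} gives $\|w_h\|_{H^{1/2}_{1/H}(\gamma_i)}\le\|E_{h,i}v\|_{H^1_{1/H}(\Omega_i)}\lesssim\|v\|_{H^{1/2}_{1/H}(\gamma_i)}$ and $\|v-w_h\|_{L^2(\gamma_i)}\lesssim h^{1/2}\|v\|_{H^{1/2}_{1/H}(\gamma_i)}$. We split
\[
P_{h,i}v=w_h+P_{h,i}(v-w_h).
\]
For the second term we need an inverse estimate on the trace space, $\|\phi_h\|_{H^{1/2}_{1/H}(\gamma_i)}\lesssim h^{-1/2}\|\phi_h\|_{L^2(\gamma_i)}$ for $\phi_h\in V_{h,i}|_{\gamma_i}$. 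To prove it, extend $\phi_h$ by its fine nodal values on $\gamma_i$ and by zero at the interior nodes; this is a discrete function supported in the layer of elements touching $\gamma_i$. A scaling argument, exactly as in the proof of Lemma \ref{lem:decomp}, shows its $H^1_{1/H}$ norm is $\lesssim h^{-1/2}\|\phi_h\|_{L^2(\gamma_i)}$, using $h\le H$. Combining this with the $L^2$ contractivity of $P_{h,i}$ gives
\[
\|P_{h,i}(v-w_h)\|_{H^{1/2}_{1/H}(\gamma_i)}\lesssim h^{-1/2}\|v-w_h\|_{L^2(\gamma_i)}\lesssim\|v\|_{H^{1/2}_{1/H}(\gamma_i)}.
\]

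\emph{Stability for $s=-1/2$.} This follows by duality because $P_{h,i}$ is $L^2$-selfadjoint. For $w\in H^{1/2}(\gamma_i)$ we have $(P_{h,i}v,w)_{\gamma_i}=(v,P_{h,i}w)_{\gamma_i}\le\|v\|_{H^{-1/2}_{1/H}}\|P_{h,i}w\|_{H^{1/2}_{1/H}}$, and the $s=1/2$ case bounds the last factor. The same reasoning applies verbatim to $\overline{P}_{h,i}$.

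\emph{Negative-norm estimate \eqref{eq:H-32err}.} Take $w\in H^2(\Omega_i)$. Galerkin orthogonality gives
\[
(v-P_{h,i}v,w)_{\gamma_i}=(v,w-P_{h,i}w)_{\gamma_i}\le\|v\|_{H^{-1/2}_{1/H}(\gamma_i)}\|w-P_{h,i}w\|_{H^{1/2}_{1/H}(\gamma_i)},
\]
so it suffices to show $\|w-P_{h,i}w\|_{H^{1/2}_{1/H}(\gamma_i)}\lesssim h\|w\|_{H^2(\Omega_i)}$. Choose $\phi_h=(\Pi_hw)|_{\gamma_i}$. Using the stability just proved and the fact that $P_{h,i}$ reproduces $\phi_h$, we get
\[
\|w-P_{h,i}w\|_{H^{1/2}_{1/H}}=\|(I-P_{h,i})(w-\phi_h)\|_{H^{1/2}_{1/H}}\lesssim\|w-\Pi_hw\|_{H^{1/2}_{1/H}(\gamma_i)}\le\|w-\Pi_hw\|_{H^1_{1/H}(\Omega_i)}.
\]
By \eqref{approxi} and $h\le H$, the last quantity is $\lesssim h\|w\|_{H^2(\Omega_i)}$. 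Taking the supremum over $w$ yields \eqref{eq:H-32err}.

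The main obstacle is the discrete inverse/trace-lifting estimate $\|\phi_h\|_{H^{1/2}_{1/H}}\lesssim h^{-1/2}\|\phi_h\|_{L^2}$ for possibly nonsmooth, piecewise-affine $\gamma_i$. The zero-extension construction handles it cleanly, since it reduces the estimate to elementwise scaling, and it avoids needing any quasi-uniformity of the boundary mesh beyond that of $\mathcal{T}_h$.
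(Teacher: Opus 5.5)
Your proof is correct and follows the paper's argument. For $s=1/2$ it uses the same lifting-plus-inverse-inequality split with $L^2$-contractivity, for $s=-1/2$ it uses duality, and for \eqref{eq:H-32err} it uses a duality argument against $\Pi_h w$ with $h\le H$. The only differences are minor: you prove the trace inverse inequality by zero extension, which the paper merely invokes. In part 2 you move $I-P_{h,i}$ onto $w$ via self-adjointness and use the $s=1/2$ stability, whereas the paper inserts $\Pi_h w$ by orthogonality and uses the $s=-1/2$ stability on $v-P_{h,i}v$; these are dual forms of the same estimate.
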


\begin{proof}
The operator $P_{h,i}$ is stable in $L^2(\gamma_i)$ that $\|P_{h,i}v\|_{L^2(\gamma_i)}\le\|v\|_{L^2(\gamma_i)}$.
If $v\in H^{1/2}(\gamma_i)$, write $P_{h,i}v=E_{h,i}v+P_{h,i}(v-E_{h,i}v)$ on $\gamma_i$, where $E_{h,i}v\in V_{h,i}$ is the lifting from Lemma \ref{lema:lift}. Then
\[
\|P_{h,i}v\|_{H^{1/2}_{1/H}(\gamma_i)}\lesssim\|E_{h,i}v\|_{H^{1/2}_{1/H}(\gamma_i)}+h^{-1/2}\|v-E_{h,i}v\|_{L^2(\gamma_i)}\lesssim\|v\|_{H^{1/2}_{1/H}(\gamma_i)},
\]
where the second step uses the inverse inequality and the $L^2(\gamma_i)$-stability of $P_{h,i}$,
\[
\|P_{h,i}(v-E_{h,i}v)\|_{H^{1/2}_{1/H}(\gamma_i)}\lesssim h^{-1/2}\|P_{h,i}(v-E_{h,i}v)\|_{L^2(\gamma_i)}\lesssim h^{-1/2}\|v-E_{h,i}v\|_{L^2(\gamma_i)}.
\]
For the $H^{-1/2}_{1/H}(\gamma_i)$-stability, by the definitions of $\overline{P}_{h,i}$ and $P_{h,i}$,
\[
 \left|(P_{h,i}v,w)_{\gamma_i}\right|=\left|(v,\overline{P}_{h,i}w)_{\gamma_i}\right|\le\|v\|_{H^{-1/2}_{1/H}(\gamma_i)}\|w\|_{H^{1/2}_{1/H}(\gamma_i)},\qquad\forall w\in H^{1/2}(\gamma_i),
\]
where the last inequality uses the $H^{1/2}_{1/H}(\gamma_i)$-stability of $\overline{P}_{h,i}$. Dividing by $\|w\|_{H^{1/2}_{1/H}(\gamma_i)}$ and taking the supremum over $w$ yields the $H^{-1/2}_{1/H}(\gamma_i)$-stability of $P_{h,i}$.

Finally, we prove the negative-norm approximation \eqref{eq:H-32err}. For any $w\in H^2(\Omega_i)$, the definition of $P_{h,i}$ implies,
\begin{align*}
|(v-P_{h,i}v,w)_{\gamma_i}|&=|(v-P_{h,i}v,w-\Pi_hw)_{\gamma_i}\le\|v-P_{h,i}v\|_{H^{-1/2}_{1/H}(\gamma_i)}\|w-\Pi_hw\|_{H^{1/2}_{1/H}(\gamma_i)}\\
&\lesssim \|v\|_{H^{-1/2}_{1/H}(\gamma_i)}\Bigl(\|\nabla(w-\Pi_hw)\|_{L^2(\Omega_i)}+H^{-1}\|w-\Pi_hw\|_{L^2(\Omega_i)}\Bigr)\\
&\lesssim h\|v\|_{H^{-1/2}_{1/H}(\gamma_i)}\|w\|_{H^2(\Omega_i)},
\end{align*}
where the second inequality uses the $H^{-1/2}_{1/H}(\gamma_i)$-stability of $P_{h,i}$ and the definition of the $H^{1/2}_{1/H}(\gamma_i)$-norm, and the last inequality uses the interpolation error estimate \eqref{approxi}. Dividing by $\|w\|_{H^2(\Omega_i)}$ and taking the supremum over $w$ gives \eqref{eq:H-32err}, and this completes the proof.
\end{proof}
Now we are ready to present the boundedness of the impedance map \eqref{eq:gh}.
\begin{lemma}[Boundedness of impedance maps]\label{lema:ghH-12}
Assume $kH\ll 1$, and let $g_h\in V_{h,i}|_{\gamma_i}$ be the solution of \eqref{eq:gh}. Then
\[
\|g_h\|_{H^{-1/2}_{1/H}(\gamma_i)}\lesssim\|v_h\|_{H^1_{1/H}(\Omega_i)}.
\]
\end{lemma}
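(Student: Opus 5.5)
We bound the dual norm directly from its definition: for $w\in H^{1/2}(\gamma_i)$ we estimate $|(g_h,w)_{\gamma_i}|$ uniformly by $\|v_h\|_{H^1_{1/H}(\Omega_i)}\|w\|_{H^{1/2}_{1/H}(\gamma_i)}$. Since $g_h\in V_{h,i}|_{\gamma_i}$, the $L^2(\gamma_i)$-projection satisfies
\[
(g_h,w)_{\gamma_i}=(g_h,\overline{P}_{h,i}w)_{\gamma_i},
\]
with $\overline{P}_{h,i}w\in V_{h,i}|_{\gamma_i}$. By Lemma \ref{lema:Phi} (the $H^{1/2}_{1/H}$-stability, which applies equally to $\overline{P}_{h,i}$), $\|\overline{P}_{h,i}w\|_{H^{1/2}_{1/H}(\gamma_i)}\lesssim\|w\|_{H^{1/2}_{1/H}(\gamma_i)}$. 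This reduces the claim to discrete test traces $w_h\in V_{h,i}|_{\gamma_i}$.

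For such $w_h$, we use the finite element lifting $E_{h,i}w_h\in V_{h,i}$ from Lemma \ref{lema:lift}. It coincides with $w_h$ on $\gamma_i$ and satisfies $\|E_{h,i}w_h\|_{H^1_{1/H}(\Omega_i)}\lesssim\|w_h\|_{H^{1/2}_{1/H}(\gamma_i)}$. The defining relation \eqref{eq:gh} is understood as $(g_h,\phi_h|_{\gamma_i})_{\gamma_i}=a_i(v_h,\phi_h)$ for $\phi_h\in V_{h,i}$. This is well defined because $v_h\in V^\partial_{h,i}$: two extensions with the same trace on $\gamma_i$ differ by an element of $V_{h,i}^0$, on which $a_i(v_h,\cdot)$ vanishes by \eqref{eq:local-space}. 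Taking $\phi_h=E_{h,i}w_h$ and applying the boundedness \eqref{eq:bound_modified} of Lemma \ref{lema:modified} (valid since $kH\ll1$) gives
\[
|(g_h,w_h)_{\gamma_i}|=|a_i(v_h,E_{h,i}w_h)|\lesssim\|v_h\|_{H^1_{1/H}(\Omega_i)}\|E_{h,i}w_h\|_{H^1_{1/H}(\Omega_i)}\lesssim\|v_h\|_{H^1_{1/H}(\Omega_i)}\|w_h\|_{H^{1/2}_{1/H}(\gamma_i)}.
\]

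Combining the two steps with $w_h=\overline{P}_{h,i}w$, we obtain $|(g_h,w)_{\gamma_i}|\lesssim\|v_h\|_{H^1_{1/H}(\Omega_i)}\|w\|_{H^{1/2}_{1/H}(\gamma_i)}$ for all $w$. Dividing by $\|w\|_{H^{1/2}_{1/H}(\gamma_i)}$ and taking the supremum yields the assertion.

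The only delicate point is the first step, namely that the stability in Lemma \ref{lema:Phi} transfers from $P_{h,i}$ to $\overline{P}_{h,i}$. This holds because $\overline{P}_{h,i}w=\overline{P_{h,i}\overline{w}}$ and all norms are conjugation invariant; the proof of Lemma \ref{lema:Phi} already uses this fact. A second point to handle carefully is that $\gamma_i$ may meet $\Gamma_i$. The lifting in Lemma \ref{lema:lift} only prescribes the trace on $\gamma_i$, so its values on $\Gamma_i$ are irrelevant to \eqref{eq:gh}, which involves only $\gamma_i$.
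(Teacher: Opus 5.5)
Your proposal is correct and follows the paper's proof essentially step for step. Like the paper, you test with the lifted discrete projection $E_{h,i}\overline{P}_{h,i}w$ and use $(g_h,w)_{\gamma_i}=(g_h,\overline{P}_{h,i}w)_{\gamma_i}$, the boundedness of $a_i$, and the stability of $E_{h,i}$ and $\overline{P}_{h,i}$. Your extra remarks, that \eqref{eq:gh} is well defined on all of $V_{h,i}$ because $a_i(v_h,\cdot)$ vanishes on $V_{h,i}^0$ and that the stability transfers from $P_{h,i}$ to $\overline{P}_{h,i}$ by conjugation, correctly spell out points the paper leaves implicit.
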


\begin{proof}
For any $w\in H^{1/2}(\gamma_i)$, choose the finite element lifting $E_{h,i}\overline{P}_{h,i}w\in V_{h,i}$ in Lemma \ref{lema:lift} such that $E_{h,i}\overline{P}_{h,i}w|_{\gamma_i}=\overline{P}_{h,i}w$ and
\[
\|E_{h,i}\overline{P}_{h,i}w\|_{H^1_{1/H}(\Omega_i)}\lesssim\|\overline{P}_{h,i}w\|_{H^{1/2}_{1/H}(\gamma_i)}\lesssim\|w\|_{H^{1/2}_{1/H}(\gamma_i)},
\]
where we use the $H^{1/2}_{1/H}(\gamma_i)$-stability of $\overline{P}_{h,i}$ in the last step.

Using the discrete variational identity \eqref{eq:vh} with this particular test function $w_h=E_{h,i}\overline{P}_{h,i}w$, we obtain
\begin{align*}
|(g_h,w)_{\gamma_i}|=|(g_h,\overline{P}_{h,i}w)_{\gamma_i}|=|a_i(v_h,E_{h,i}\overline{P}_{h,i}w)|\lesssim \|v_h\|_{H^1_{1/H}(\Omega_i)}\|E_{h,i}\overline{P}_{h,i}w\|_{H^1_{1/H}(\Omega_i)},
\end{align*}
where the boundedness of $a_i(\cdot,\cdot)$ \eqref{eq:bound_modified} was used in the last inequality. 

Finally, combining the above two inequalities and taking the supremum over all $w\in H^{1/2}(\gamma_i)$ yields the desired estimate.
\end{proof}

\subsection{Approximation and contractivity}
We now derive approximation properties of the edge multiscale ansatz space $V_{h,0}$ to the target space $V^\partial_h$ of locally Helmholtz-harmonic discrete functions, as well as to the solution of the adjoint problem. %As in \cite[Theorem 4.2]{MR4860976}, the proof is based on the global and local splittings, but uses the weighted prolongation $\tilde{R}^T_i$ in place of the partition-of-unity functions $\chi_i$. 
\begin{lemma}[Approximation properties of the edge multiscale ansatz space $V_{h,0}$ to $V^\partial_h$] \label{lema:last-app-harmonic}
Assume $\delta\le H\ll k^{-1}$ and $h\le 2^{-\ell}H$. For any $v_h=\sum_{i=1}^N\tilde{R}_i^Tv_i\in V^\partial_h$ as in \eqref{eq:glo-edge-space}, with $v_i\in V_{h,i}^\partial$ as in \eqref{eq:local-space}, there holds
\begin{equation}\label{eq:waveletErrconv1}
\inf_{v_0\in V_{h,0}}\|v_h-v_0\|_{H^1_k(\Omega)}\lesssim2^{-\ell}\frac{H}{\delta}\biggl(\sum_{i=1}^N\| v_{i}\|_{H^1_{1/H}(\Omega_i)}^2\biggr)^{1/2}.
\end{equation}
\end{lemma}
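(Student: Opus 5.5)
Fix $v_h=\sum_{i}\tilde R_i^Tv_i$ with $v_i\in V^\partial_{h,i}$. The plan is to approximate each $v_i$ locally by an element of $\mathcal{L}^{-1}_{h,i}(E_{\ell,i})$, glue the local approximants with $\tilde R_i^T$, and apply Lemma~\ref{lema:pum-2}.

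\textbf{Local candidate.} Let $g_{h,i}\in V_{h,i}|_{\gamma_i}$ be the impedance datum of $v_i$ from \eqref{eq:gh}. Since $v_i\in V^\partial_{h,i}$, the functional $w_h\mapsto a_i(v_i,w_h)$ vanishes on $V^0_{h,i}$. Hence it factors through traces on $\gamma_i$, and $v_i$ solves \eqref{eq:vh} with $f_i=0$ and boundary data $g_{h,i}$ supported on $\gamma_i$. I extend $g_{h,i}$ by zero on $\Gamma_i$ and note that this extension lies in $V_h|_{\partial\Omega_i}$. As local approximant I take
\[
v_{0,i}:=\mathcal L^{-1}_{h,i}(\mathcal P_{\ell,i}g_{h,i})\in\mathcal L^{-1}_{h,i}(E_{\ell,i}),
\]
which is possible by linearity of \eqref{eq:Li}. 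The global candidate is $v_0:=\sum_i\tilde R_i^Tv_{0,i}\in V_{h,0}$.

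\textbf{Error bound.} By linearity, $e_i:=v_i-v_{0,i}$ solves \eqref{eq:vh} with $f_i=0$ and $g_i=r_i:=g_{h,i}-\mathcal P_{\ell,i}g_{h,i}$. Lemma~\ref{lema:pum-2} followed by Lemma~\ref{lema:vhH1} gives
\[
\|v_h-v_0\|_{H^1_k(\Omega)}^2\lesssim\sum_i\|\chi_ie_i\|_{H^1_k(\Omega_i)}^2,
\]
\[
\delta\|\chi_ie_i\|_{H^1_k(\Omega_i)}\lesssim H\|r_i\|_{H^{-1/2}(\Gamma_i)}+\|r_i\|_{H^{-3/2}_*(\gamma_i)}+h\|r_i\|_{H^{-1/2}_{1/H}(\gamma_i)}.
\]
It therefore suffices to show that each of the three terms on the right is bounded by $2^{-\ell}H\|v_i\|_{H^1_{1/H}(\Omega_i)}$.

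\textbf{Bounding the residual.} The main tool is an $L^2$-projection duality argument at negative order. For a test function $w$ I write
\[
(r_i,w)=(g_{h,i},w-\mathcal P_{\ell,i}w),
\]
using self-adjointness of $\mathcal P_{\ell,i}$. By \eqref{eq:approx-wavelets} with $s=2$ (or $s=1$), together with an $H^{1/2}$-stable lifting of $w-\mathcal P_{\ell,i}w$, I expect
\[
\|r_i\|_{H^{-3/2}_*}\lesssim 2^{-\ell}H\,\|g_{h,i}\|_{H^{-1/2}_{1/H}}.
\]
The $H^{-1/2}$ norms of $r_i$ should be controlled by $\|g_{h,i}\|_{H^{-1/2}_{1/H}}$ through stability of $\mathcal P_{\ell,i}$ in weighted $H^{\pm1/2}$ norms. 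That stability can be proved as in Lemma~\ref{lema:Phi}, with the inverse estimate on $E_{\ell,i}$ at scale $h_\ell$ replacing the one at scale $h$. Lemma~\ref{lema:ghH-12} then gives $\|g_{h,i}\|_{H^{-1/2}_{1/H}}\lesssim\|v_i\|_{H^1_{1/H}}$. With these bounds, the $h$-term is at most $h\|v_i\|_{H^1_{1/H}}\le2^{-\ell}H\|v_i\|_{H^1_{1/H}}$, using the hypothesis $h\le2^{-\ell}H$.

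\textbf{Main obstacle.} The $\Gamma_i$ term is the delicate one. Here $r_i|_{\Gamma_i}=-\mathcal P_{\ell,i}g_{h,i}|_{\Gamma_i}$, which need not vanish, because $\mathcal P_{\ell,i}$ is a global projection on $\partial\Omega_i$. The estimate $\|r_i\|_{H^{-1/2}(\Gamma_i)}\lesssim 2^{-\ell}\|v_i\|_{H^1_{1/H}}$ must again come from duality: the extended $g_{h,i}$ is orthogonal to nothing in particular on $\Gamma_i$, but we can pair with $H^{1/2}$ functions and gain $(2^{-\ell}H)^{1/2}$ twice, once from each side of the projection error. If that gain is insufficient, I would instead project only on $\gamma_i$, i.e.\ use $E_{\ell,i}$ elements supported near $\gamma_i$. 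Because $g_{h,i}$ vanishes on $\Gamma_i$, this makes the $\Gamma_i$ term negligible up to corner effects.

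Summing the squared local bounds over $i$ yields \eqref{eq:waveletErrconv1}.
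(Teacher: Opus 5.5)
\textbf{Verdict: genuine gap.} Up to the step you call the main obstacle, your argument is the paper's proof. You use the same ingredients: the impedance datum from \eqref{eq:gh}, the approximant $\mathcal{L}^{-1}_{h,i}\mathcal{P}_{\ell,i}g_{h,i}$, Lemma~\ref{lema:vhH1} applied to the error, and weighted stability plus a negative-norm estimate for $\mathcal{P}_{\ell,i}$ ``analogous to Lemma~\ref{lema:Phi}''. You then finish with Lemma~\ref{lema:ghH-12}, $h\le 2^{-\ell}H$, and Lemma~\ref{lema:pum-2}. For interior subdomains ($\Gamma_i=\varnothing$, so $\gamma_i=\partial\Omega_i$ is closed) this is complete at the paper's level of detail. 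For subdomains touching $\partial\Omega$, however, you leave the $\Gamma_i$-term open. The paper's written proof does not treat it either. It states $a_i(v_i-\mathcal{L}^{-1}_{h,i}\mathcal{P}_{\ell,i}g_i,w_h)=(g_i-\mathcal{P}_{\ell,i}g_i,w_h)_{\gamma_i}$, which is exact only when $\Gamma_i=\varnothing$; otherwise it drops exactly the term $-(\mathcal{P}_{\ell,i}g_i,w_h)_{\Gamma_i}$ you found. (Relatedly, your claim that the zero extension of $g_{h,i}$ lies in $V_h|_{\partial\Omega_i}$ fails when $g_{h,i}$ is nonzero at a junction point. This is harmless, since $\mathcal{P}_{\ell,i}$ extends to $L^2(\partial\Omega_i)$, but it is the same junction phenomenon.)

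Neither of your fallbacks closes the gap.

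\emph{First fallback (duality on $\Gamma_i$).} The target $\|r_i\|_{H^{-1/2}(\Gamma_i)}\lesssim 2^{-\ell}\|v_i\|_{H^1_{1/H}(\Omega_i)}$ is false. Take a junction point $c\in\overline{\gamma_i}\cap\overline{\Gamma_i}$ and a fine-grid bump $g_{h,i}$ of height one on the coarse element of $\gamma_i$ adjacent to $c$. Let $v_i\in V^\partial_{h,i}$ satisfy $a_i(v_i,w_h)=(g_{h,i},w_h)_{\gamma_i}$ for all $w_h\in V_{h,i}$. Then $\mathcal{P}_{\ell,i}g_{h,i}$ has size $O(1)$ at $c$ and decays only geometrically into $\Gamma_i$ (ratio $\sqrt{3}-2$ per node on a uniform coarse mesh). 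Hence $\int_{\Gamma_i}\mathcal{P}_{\ell,i}g_{h,i}\simeq h_\ell$, and testing with $w\equiv1$ gives $\|r_i\|_{H^{-1/2}(\Gamma_i)}\gtrsim h_\ell$. Coercivity, on the other hand, gives $\|v_i\|_{H^1_{1/H}(\Omega_i)}\lesssim\|g_{h,i}\|_{H^{-1/2}_{1/H}(\gamma_i)}\lesssim h_\ell\sqrt{\ell}$. The target would therefore force $1\lesssim 2^{-\ell}\sqrt{\ell}$. Your ``gain twice'' heuristic fails because a test function on $\Gamma_i$, extended by zero, jumps at $c$, so its projection error is $O(1)$ there, and $g_{h,i}$ has no smoothness to spare.

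\emph{The same jump breaks your $\gamma_i$-estimate.} The identity $(r_i,w)=(g_{h,i},w-\mathcal{P}_{\ell,i}w)$ holds for the pairing over all of $\partial\Omega_i$, not over $\gamma_i$ alone. Indeed $(r_i,1)_{\gamma_i}=\int_{\Gamma_i}\mathcal{P}_{\ell,i}g_{h,i}\simeq h_\ell$ even though $1\in E_{\ell,i}$. Once the boundary is split as in Lemma~\ref{lema:vhH1}, neither piece of $r_i$ carries the factor $2^{-\ell}$.

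\emph{Second fallback (projecting only on $\gamma_i$).} If you keep the junction hat $\psi_c$, you are back in the first case. If you drop it, the residual stays on $\gamma_i$ but is no longer orthogonal to $\psi_c$. For the same bump, $(r_i,1)_{\gamma_i}\simeq h_\ell$ again. So the corner effect is as large as the uncorrected datum, not negligible.

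Closing the gap needs one of two ideas.

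\emph{Option 1: keep the full-boundary projection and never split the pairing.} In the duality step, $(r_i,z)_{\partial\Omega_i}=(g_{h,i},z-\mathcal{P}_{\ell,i}z)_{\partial\Omega_i}$ does gain $h_\ell$, because $z|_{\partial\Omega_i}$ is continuous. You would still have to control the term $(r_i,\Pi_h(\chi_i^2e_i))_{\Gamma_i}$ in the Caccioppoli step separately. That requires exploiting that $\chi_i=0$ on $\gamma_i$ while the spillover is concentrated within a few coarse elements of the junctions.

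\emph{Option 2: push your second fallback one step further.} Project $g_{h,i}$ in $L^2(\gamma_i)$ onto continuous coarse $\mathbb{P}_1$ functions on $\overline{\gamma_i}$ that are \emph{not} constrained at the junctions, and extend by zero to $\Gamma_i$. Then $r_i$ is supported on $\gamma_i$ and orthogonal there to every coarse function, and the argument of Lemma~\ref{lema:Phi} at scale $h_\ell$ goes through. But this datum jumps at the junctions, so it lies outside $E_{\ell,i}\subset C(\partial\Omega_i)$. The space $V_{h,0}$ would have to be enlarged by the corresponding one-sided junction functions.
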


\begin{proof}
As shown in \cite[Proposition 4.1]{MR4890903}, for each $v_i\in V_{h,i}^\partial$ there exists $g_i\in V_{h,i}|_{\gamma_i}$ such that
\[
a_i(v_i,w_h)=(g_i,w_h)_{\gamma_i},\qquad\forall w_h\in V_{h,i}.
\]
Therefore, by Lemma \ref{lema:ghH-12}, under the assumption $h\le H\ll k^{-1}$, the solution $g_i$ is well-posed and $\|g_i\|_{H^{-1/2}_{1/H}(\gamma_i)}\lesssim\|v_i\|_{H^1_{1/H}(\Omega_i)}$.

By the definition of $\mathcal{L}_{h,i}^{-1}$ in \eqref{eq:Li},
\[
a_i(v_i-\mathcal{L}_{h,i}^{-1}\mathcal{P}_{\ell,i}g_{i}, w_h)=(g_i-\mathcal{P}_{\ell,i}g_i,w_h)_{\gamma_i},\qquad\forall w_h\in V_{h,i}.
\]
Since $v_i-\mathcal{L}_{h,i}^{-1}\mathcal{P}_{\ell,i}g_{i}\in V_{h,i}$ satisfies \eqref{eq:vh} with impedance boundary datum $g_i-\mathcal{P}_{\ell,i}g_i\in V_{h,i}|_{\gamma_i}$, Lemma \ref{lema:vhH1} gives 
\begin{align*}
\delta\|\chi_i(v_i-\mathcal{L}_{h,i}^{-1}\mathcal{P}_{\ell,i}g_{i})\|_{H^1_k(\Omega_i)}&\lesssim\|g_i-\mathcal{P}_{\ell,i}g_i\|_{H^{-3/2}_*(\gamma_i)}+h\|g_i-\mathcal{P}_{\ell,i}g_i\|_{H^{-1/2}_{1/H}(\gamma_i)}\\
&\lesssim (2^{-\ell}H+h)\|g_i\|_{H^{-1/2}_{1/H}(\gamma_i)}\lesssim 2^{-\ell}H\|v_i\|_{H^1_{1/H}(\Omega_i)},
\end{align*}
where the second line uses the $H^{-3/2}_*(\gamma_i)$-approximation error and the $H^{-1/2}_{1/H}(\gamma_i)$-stability of $\mathcal{P}_{\ell,i}$, analogous to Lemma \ref{lema:Phi}.

Finally, for the approximation $v_0=\sum_{i=1}^N\tilde{R}_i^T\mathcal{L}_{h,i}^{-1}\mathcal{P}_{\ell,i}g_{i}\in V_{h,0}$, Lemma \ref{lema:pum-2} yields
\[
\|v_h-v_0\|_{H^1_k(\Omega)}^2\lesssim\sum_{i=1}^N\|\chi_i(v_i-\mathcal{L}_{h,i}^{-1}\mathcal{P}_{\ell,i}g_{i})\|_{H^1_k(\Omega_i)}^2\lesssim \Bigl(2^{-\ell}\frac{H}{\delta}\Bigr)^2\sum_{i=1}^N\|v_i\|_{H^1_{1/H}(\Omega_i)}^2.
\]
This proves \eqref{eq:waveletErrconv1}. 
\end{proof}

\begin{lemma}[Approximation properties of the coarse space $V_{h,0}$ to the adjoint problem] \label{lema:last}
Assume $\delta\le H\ll k^{-1}$ and $h\le 2^{-\ell}H$. For any $f\in L^2(\Omega)$, let $v\in H^1(\Omega)$ satisfy
\[
a(w,v)=(w,f)_\Omega,\qquad\forall w\in H^1(\Omega).
\]
Then
\begin{equation}\label{eq:waveletErrconvDual}
\inf_{v_0\in V_{h,0}}\|v-v_0\|_{H^1_k(\Omega)}\lesssim\Bigl(kh+\frac{H}{\delta}(H+2^{-2\ell})\Bigr)\|f\|_{L^2(\Omega)}.
\end{equation}
\end{lemma}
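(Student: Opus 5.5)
The plan is a three-way splitting of $v$: a fine-scale interpolation, a local decomposition into a particular part and a Helmholtz-harmonic part, and an edge approximation of the harmonic part through the local solution operators $\mathcal{L}^{-1}_{h,i}$, glued together with the weighted prolongation $\tilde R_i^T$.

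\emph{Step 1: fine-scale error.} By \eqref{eq:uH2} applied to the adjoint problem with $g=0$, $\|v\|_{H^2(\Omega)}\lesssim k\|f\|_{L^2(\Omega)}$. Hence $v_h:=\Pi_hv$ satisfies
\[
\|v-v_h\|_{H^1_k(\Omega)}\lesssim h\|v\|_{H^2(\Omega)}\lesssim kh\|f\|_{L^2(\Omega)},
\]
which produces the $kh$ term.

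\emph{Step 2: local splitting of $v_h$.} Using \eqref{eq:pou}, write $v_h=\sum_i\tilde R_i^T(v_h|_{\Omega_i})$. On each $\Omega_i$, split $v_h|_{\Omega_i}=v_i^0+v_i^\partial$, where $v_i^0\in V_{h,i}^0$ solves
\[
a_i(v_i^0,w_h)=a_i(v_h,w_h)\qquad\forall w_h\in V_{h,i}^0 .
\]
Then $v_i^\partial\in V_{h,i}^\partial$ automatically.

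\emph{Step 3: the bubble part.} Since the continuous $v$ satisfies $-\Delta\bar v-k^2\bar v=\bar f$ locally, one has $a_i(v,w)=(w,f)_{\Omega_i}$ for $w\in H^1_0$-type functions vanishing on $\gamma_i$ (and on $\Gamma_i$ the impedance term is consistent with $\kappa_i=k$). Therefore
\[
a_i(v_i^0,w_h)=(w_h,f)_{\Omega_i}+a_i(v_h-v,w_h),
\]
which is an $L^2$ source plus a consistency error of order $h\|v\|_{H^2}$. Coercivity \eqref{eq:coercive_modified} together with the Poincaré inequality on $V_{h,i}^0$ gives
\[
\|v_i^0\|_{H^1_{1/H}(\Omega_i)}\lesssim H\|f\|_{L^2(\Omega_i)}+h\|v\|_{H^2(\Omega_i)} .
\]
Note that $\|\chi_i w\|_{H^1_k}\lesssim \delta^{-1}H\|w\|_{H^1_{1/H}}$ since $\|\nabla\chi_i\|\lesssim\delta^{-1}$. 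Summing with Lemma \ref{lema:pum-2} therefore yields $\frac H\delta(H\|f\|+h k\|f\|)$, and the second piece is absorbed into the $kh$ term, or into $\frac{H}{\delta}2^{-2\ell}$ using $h\le 2^{-\ell}H$.

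\emph{Step 4: the harmonic part.} For $v_i^\partial$, introduce the impedance datum $g_i$ from \eqref{eq:gh}. Following the proof of Lemma \ref{lema:last-app-harmonic}, approximate $v_i^\partial$ by $\mathcal{L}^{-1}_{h,i}\mathcal{P}_{\ell,i}g_i$ and control the error through Lemma \ref{lema:vhH1} by $\|g_i-\mathcal{P}_{\ell,i}g_i\|_{H^{-3/2}_*(\gamma_i)}+h\|g_i-\mathcal{P}_{\ell,i}g_i\|_{H^{-1/2}_{1/H}(\gamma_i)}$.

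To get $2^{-2\ell}$ instead of $2^{-\ell}H$, use that $v$ is $H^2$. The datum $g_i$ is a discrete version of $\partial_nv+\mathrm iH^{-1}v\in H^{1/2}(\gamma_i)$, and $\|\partial_n v+\mathrm iH^{-1}v\|_{H^{1/2}_{1/H}}$ is of order $H^{-1}\|v\|_{H^2(\Omega_i)}$ plus lower-order terms. Apply duality with the $H^{3/2}$-weight: $\mathcal{P}_{\ell,i}$ is an $L^2$ projection, so $(g-\mathcal{P}_{\ell,i}g,w)=(g-\mathcal{P}_{\ell,i}g,w-\mathcal{P}_{\ell,i}w)$. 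Using \eqref{eq:approx-wavelets} with $s=1/2$ on $g$ and $s=3/2$ on $w$ gives $2^{-2\ell}H^2$ times $|g|_{H^{1/2}}\|w\|_{H^{2}}$. With $\|v\|_{H^2}\lesssim k\|f\|$ and $kH\ll1$ this becomes $\frac{H}{\delta}2^{-2\ell}\|f\|$ after summation.

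\emph{Main obstacle.} The hardest step is making Step 4 rigorous: relating the discrete datum $g_i$ to the continuous trace of $\partial_n v+\mathrm iH^{-1}v$. The difference $g_i-P_{h,i}(\partial_nv+\mathrm iH^{-1}v)$ is controlled in $H^{-1/2}_{1/H}$ by $\|v-v_h\|$ and the bubble part via Lemma \ref{lema:ghH-12}. Its contribution must then be pushed through Lemma \ref{lema:vhH1} only with $h$ and $H$ weights, using \eqref{eq:H-32err}, so that it stays within $kh+\frac H\delta H$.
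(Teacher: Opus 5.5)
There is a genuine gap in Step 3, at the physical boundary $\Gamma_i$. You use $a_i(v,w)=(f,w)_{\Omega_i}$ for $w$ vanishing on $\gamma_i$, assuming the impedance term on $\Gamma_i$ is consistent. It is not. The adjoint solution satisfies $-\Delta v-k^2v=f$ but the \emph{conjugate} condition $\partial_nv-\mathrm{i}kv=0$ on $\partial\Omega$. Hence, for $w\in V^0_{h,i}$ (such $w$ need not vanish on $\Gamma_i$),
\[
a_i(v,w)=(f,w)_{\Omega_i}+(\partial_nv+\mathrm{i}kv,w)_{\Gamma_i}=(f,w)_{\Omega_i}+(2\mathrm{i}kv,w)_{\Gamma_i}.
\]
Your identity holds for $\bar v$, or with $v$ in the second argument of $a_i$. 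However, your splitting, $V^\partial_{h,i}$ and $\mathcal{L}^{-1}_{h,i}$ all use the first argument, and $V_{h,0}$ is not closed under conjugation.

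So on every subdomain touching $\partial\Omega$ the bubble $v_i^0$ carries the datum $2\mathrm{i}kv|_{\Gamma_i}$. Meanwhile $v_i^\partial$ has homogeneous impedance data on $\Gamma_i$, and your $g_i$ from \eqref{eq:gh} lives on $\gamma_i$ only. The edge functions on $\Gamma_i$ that $V_{h,0}$ contains are therefore never used.

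Discarding this bubble is too costly. At scale $H$ with $kH\ll1$ it behaves like a problem with zero data on $\gamma_i$ and Neumann data $2\mathrm{i}kv$ on $\Gamma_i$. Thus $\|v_i^0\|_{H^1_{1/H}(\Omega_i)}$ is of order $\sqrt{H}\,k\|v\|_{L^2(\Gamma_i)}$, and summing gives order $\sqrt{H}\,k\|v\|_{L^2(\partial\Omega)}$ (times $H/\delta$ with your cut-off bound). From $\Im a(v,v)=k\|v\|_{L^2(\partial\Omega)}^2$ one only gets $k\|v\|_{L^2(\partial\Omega)}\lesssim\|f\|_{L^2(\Omega)}$, and this is attained in general. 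The error is then of order $\sqrt{H}\|f\|_{L^2(\Omega)}$. That is not bounded by $kh+\frac{H}{\delta}(H+2^{-2\ell})$ (e.g. in the regime of Corollary \ref{coro:main}), and it would force $H\ll k^{-2}$ in Lemma \ref{lem:infsup}.

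The missing idea is to approximate the $\Gamma_i$ datum with the $\Gamma_i$ part of $E_{\ell,i}$ instead of discarding it. The paper does this without any bubble/harmonic splitting:
\begin{itemize}
\item It takes $v_i\in V_{h,i}$ to be the local Galerkin approximation of $v|_{\Omega_i}$ with source $f$ and the exact continuous data $g_i=2\mathrm{i}kv$ on $\Gamma_i$ and $g_i=\partial_nv+\mathrm{i}H^{-1}v$ on $\gamma_i$.
\item Then $\|\chi_i(v-v_i)\|_{H^1_k(\Omega_i)}\lesssim h\|v\|_{H^2(\Omega_i)}$ by Lemma \ref{lema:uerrH1_modified}; the factor $\delta^{-1}$ only hits the $L^2$ error $h^2\|v\|_{H^2(\Omega_i)}$.
\item It applies Lemma \ref{lema:vhH1} to $v_i-\mathcal{L}^{-1}_{h,i}\mathcal{P}_{\ell,i}g_i$. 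This handles the source, which contributes $H^2\|f\|_{L^2(\Omega_i)}$ thanks to the mIBC, together with the full datum $g_i-\mathcal{P}_{\ell,i}g_i$ on $\partial\Omega_i$.
\item The $\Gamma_i$ term is $H\|g_i-\mathcal{P}_{\ell,i}g_i\|_{H^{-1/2}(\Gamma_i)}\lesssim 2^{-2\ell}H^3\|g_i\|_{H^{3/2}(\Gamma_i)}$. The $\gamma_i$ terms are bounded as in your Step 4.
\end{itemize}
Because this works with continuous traces, your ``main obstacle'' of relating the discrete datum of \eqref{eq:gh} to $\partial_nv+\mathrm{i}H^{-1}v$ disappears.

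A smaller point: in Step 3 the term $\frac{H}{\delta}hk\|f\|_{L^2(\Omega)}$ is not absorbed as you claim. It is $\lesssim kh$ only when $\delta\simeq H$. Also, $h\le 2^{-\ell}H$ gives only $hk\le 2^{-\ell}kH$, which need not be $\lesssim H+2^{-2\ell}$. Comparing with $v_i$ as the paper does avoids this term entirely.
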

\begin{proof}
For each $\Omega_i$, let $v_i\in V_{h,i}$ be the solution of
\[
a_i(v_i,w_h)=(f,w_h)_{\Omega_i}+(g_i,w_h)_{\partial\Omega_i},\qquad\forall w_h\in V_{h,i},
\]
where $g_i\in L^2(\partial\Omega_i)$ is given by $g_i=2\mathrm{i}kv$ on $\Gamma_i$ and $g_i=\partial_nv+\mathrm{i}H^{-1}v$ on $\gamma_i$. Clearly, $v_i$ is the finite element approximation of $v|_{\Omega_i}$. Hence, Lemma \ref{lema:uerrH1_modified} implies
\begin{equation}\label{eq:lemalast0}
    \|\chi_i(v-v_i)\|_{H^1_k(\Omega_i)}\lesssim\|v-v_i\|_{H^1_k(\Omega_i)}+\delta^{-1}\|v-v_i\|_{L^2(\Omega_i)}\lesssim h\|v\|_{H^2(\Omega_i)}.
\end{equation}
By the definition of $\mathcal{L}_{h,i}^{-1}$ \eqref{eq:Li},
\[
a_i(v_i-\mathcal{L}_{h,i}^{-1}\mathcal{P}_{\ell,i}g_i,w_h)=(f,w_h)_{\Omega_i}+(g_i-\mathcal{P}_{\ell,i}g_i,w_h)_{\partial\Omega_i},\qquad\forall w_h\in V_{h,i}.
\]
Since $v_i-\mathcal{L}_{h,i}^{-1}\mathcal{P}_{\ell,i}g_{i}\in V_{h,i}$ satisfies \eqref{eq:vh} with source datum $f\in L^2(\Omega_i)$ and impedance boundary datum $g_i-\mathcal{P}_{\ell,i}g_i\in L^2(\partial\Omega_i)$, Lemma \ref{lema:vhH1} gives 
\begin{equation}\label{eq:lemalast1}
\begin{aligned}
\delta\|\chi_i(v_i-\mathcal{L}_{h,i}^{-1}\mathcal{P}_{\ell,i}g_{i}\|_{H^1_k(\Omega_i)}&\lesssim H^2\|f\|_{L^2(\Omega_i)}+H\|g_i-\mathcal{P}_{\ell,i}g_i\|_{H^{-1/2}(\Gamma_i)}\\
&\qquad+\|g_i-\mathcal{P}_{\ell,i}g_i\|_{H^{-3/2}_*(\gamma_i)}+h\|g_i-\mathcal{P}_{\ell,i}g_i\|_{H^{-1/2}_{1/H}(\gamma_i)}\\
&\lesssim H^2\Bigl(\|f\|_{L^2(\Omega_i)}+2^{-2\ell}H\|g_i\|_{H^{3/2}(\Gamma_i)}+2^{-2\ell}\|g_i\|_{H^{1/2}(\gamma_i)}\Bigr),
\end{aligned}\end{equation}
where the last line follows from the approximation properties \eqref{eq:approx-wavelets}. To illustrate the last estimate, consider the second term: for all $w\in H^{1/2}(\Gamma_i)$,
\[
(g_i-\mathcal{P}_{\ell,i}g_i,w)_{\Gamma_i}=(g_i-\mathcal{P}_{\ell,i}g_i,w-\mathcal{P}_{\ell,i}w)_{\Gamma_i}\lesssim 2^{-2\ell}H^2\|g_i\|_{H^{3/2}(\Gamma_i)}\|w\|_{H^{1/2}(\Gamma_i)},
\]
which shows that $\|g_i-\mathcal{P}_{\ell,i}g_i\|_{H^{-1/2}(\Gamma_i)}\lesssim 2^{-2\ell}H^2\|g_i\|_{H^{3/2}(\Gamma_i)}$.

Then an application of the triangle inequality, together with \eqref{eq:lemalast0}, \eqref{eq:lemalast1} and the trace inequality, yields
\begin{align*}
    \|\chi_i(v-\mathcal{L}_{h,i}^{-1}\mathcal{P}_{\ell,i}g_{i}\|_{H^1_k(\Omega_i)}\lesssim\frac{H^2}{\delta}\Bigl(\|f\|_{L^2(\Omega_i)}+2^{-2\ell}\|v\|_{H^2(\Omega_i)}+2^{-2\ell}H^{-1}\|v\|_{H^1(\Omega_i)}\Bigr)+h\|v\|_{H^2(\Omega_i)}.
\end{align*}
Denote $v_0\coloneqq\sum_{i=1}^N\tilde{R}_i^T\mathcal{L}_{h,i}^{-1}\mathcal{P}_{\ell,i}g_{i}\in V_{h,0}$, then we derive from Lemma \ref{lema:pum-2} combined with \eqref{eq:sumpou-ieq}, 
\begin{align*}
\|v-v_0\|_{H^1_k(\Omega)}&\lesssim\Bigl(\sum_{i=1}^N\|\chi_i(v_i-\mathcal{L}_{h,i}^{-1}\mathcal{P}_{\ell,i}g_{i})\|_{H^1_k(\Omega_i)}^2\Bigr)^{1/2}\\
&\lesssim h\|v\|_{H^2(\Omega)}+\frac{H^2}{\delta}\Bigl(\|f\|_{L^2(\Omega)}+2^{-2\ell}\|v\|_{H^2(\Omega)}+2^{-2\ell}H^{-1}\|v\|_{H^1(\Omega)}\Bigr)\\
&\lesssim \Bigl(kh+\frac{H}{\delta}(H+2^{-2\ell})\Bigr)\|f\|_{L^2(\Omega)},
\end{align*}
where the regularities \eqref{eq:uH1} and \eqref{eq:uH2} of the adjoint Helmholtz equation were used in the last line. This proves \eqref{eq:waveletErrconvDual}. 
\end{proof}
Next, we derive the well-posedness of $A_0$.
\begin{lemma}[Inf-sup condition for $A_0$]\label{lem:infsup}
Assume $k^2h\ll1$, $h\le 2^{-\ell}H$ and $kH(H+2^{-2\ell})\ll\delta\le H$. Then
\[
\sup_{w_h\in V_{h,0}}\frac{\Re a(v_h,w_h)}{\|w_h\|_{H^1_k(\Omega)}}\gtrsim\frac{1}{k}\|v_h\|_{H^1_k(\Omega)},\qquad\forall v_h\in V_{h,0}.
\]
\end{lemma}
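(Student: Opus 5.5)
We run a Schatz-type duality argument inside $V_{h,0}$: take the test function $v_h+w_0$, where $w_0$ is the best approximation in $V_{h,0}$ of a continuous adjoint solution, so that Gårding's inequality produces the desired $\|v_h\|_{H^1_k(\Omega)}^2$ up to a remainder that Lemma \ref{lema:last} makes small.

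\textbf{Step 1 (adjoint problem and its approximation).} Fix $v_h\in V_{h,0}$. Let $z\in H^1(\Omega)$ solve the continuous adjoint problem
\[
a(w,z)=(w,2k^2v_h)_\Omega,\qquad \forall w\in H^1(\Omega).
\]
By the stability \eqref{eq:uH1} applied to the adjoint problem,
\[
\|z\|_{H^1_k(\Omega)}\lesssim k^2\|v_h\|_{L^2(\Omega)}\le k\|v_h\|_{H^1_k(\Omega)} .
\]
Lemma \ref{lema:last} with $f=2k^2v_h$ gives $z_0\in V_{h,0}$ with
\[
\|z-z_0\|_{H^1_k(\Omega)}\lesssim \varepsilon\, k^2\|v_h\|_{L^2(\Omega)},\qquad \varepsilon:=kh+\frac{H}{\delta}(H+2^{-2\ell}).
\]
Since $k\|v_h\|_{L^2(\Omega)}\le\|v_h\|_{H^1_k(\Omega)}$, this is at most $\varepsilon k\|v_h\|_{H^1_k(\Omega)}$.

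\textbf{Step 2 (test with $w_h=v_h+z_0\in V_{h,0}$).} Using $a(v_h,z)=2k^2\|v_h\|_{L^2(\Omega)}^2$, which is real, together with Gårding's inequality \eqref{eq:garding}, we get
\begin{align*}
\Re a(v_h,v_h+z_0)&=\Re a(v_h,v_h)+2k^2\|v_h\|_{L^2(\Omega)}^2-\Re a(v_h,z-z_0)\\
&\ge \|v_h\|_{H^1_k(\Omega)}^2-C\|v_h\|_{H^1_k(\Omega)}\|z-z_0\|_{H^1_k(\Omega)}
\ge (1-C\varepsilon k)\|v_h\|_{H^1_k(\Omega)}^2 .
\end{align*}
The middle inequality uses the boundedness \eqref{eq:sesqui-bound}.

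\textbf{Step 3 (smallness of $\varepsilon k$ and normalization).} We need $C\varepsilon k\le 1/2$. The assumption $kH(H+2^{-2\ell})\ll\delta$ gives $k\frac{H}{\delta}(H+2^{-2\ell})\ll1$, and $k^2h\ll1$ gives $k\cdot kh\ll1$; hence $\varepsilon k\ll1$. The hypotheses $h\le2^{-\ell}H$ and $\delta\le H\ll k^{-1}$ needed by Lemma \ref{lema:last} follow: $kH\ll1$ holds because $kH^2\le kH(H+2^{-2\ell})\ll\delta\le H$. For the normalization,
\[
\|w_h\|_{H^1_k(\Omega)}\le \|v_h\|_{H^1_k(\Omega)}+\|z\|_{H^1_k(\Omega)}+\|z-z_0\|_{H^1_k(\Omega)}\lesssim k\|v_h\|_{H^1_k(\Omega)},
\]
using $k\ge k_0$ and $\varepsilon\lesssim1$. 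Dividing the bound of Step 2 by this yields the claimed $\frac1k\|v_h\|_{H^1_k(\Omega)}$.

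The main obstacle is Step 3: checking that the product $\varepsilon k$, rather than $\varepsilon$ alone, is small under exactly the stated assumptions. This is precisely why the inf-sup condition requires the factor $k$ in the hypothesis $kH(H+2^{-2\ell})\ll\delta$. The remaining work is bookkeeping of the hidden constants in \eqref{eq:uH1} and Lemma \ref{lema:last}.
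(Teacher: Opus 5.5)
Your proposal is correct and is essentially the paper's proof. The paper takes the adjoint datum $v_h$ and the test function $v_h+2k^2z_0$, which is your construction with the factor $2k^2$ moved from the right-hand side into the test function. It then uses Lemma \ref{lema:last}, the identity $\Re a(v_h,v_h)+2k^2\|v_h\|_{L^2(\Omega)}^2=\|v_h\|_{H^1_k(\Omega)}^2$, and the same bound $\|w_h\|_{H^1_k(\Omega)}\lesssim k\|v_h\|_{H^1_k(\Omega)}$, exactly as you do. Your explicit check that $kH\ll1$ follows from $kH^2\le kH(H+2^{-2\ell})\ll\delta\le H$, which makes Lemma \ref{lema:last} applicable, is a detail the paper leaves implicit.
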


\begin{proof}
We establish the inf-sup condition by an Aubin--Nitsche argument. For fixed $v_h\in V_{h,0}$, we seek $z\in H^2(\Omega)$ satisfying
\begin{align*}
a(w,z)=(w,v_h)\qquad\forall v_h\in V_{h,0}.
\end{align*}
%\[\left\{\begin{aligned}
%-\Delta z-k^2z&=v_h&&\text{in }\Omega, \\
%\partial_nz-\mathrm{i}kz&=0&&\text{on }\partial\Omega.
%\end{aligned}\right.
%\]
Then Lemma \ref{lema:last} implies the existence of $z_0\in V_{h,0}$, s.t., 
\[
\|z-z_0\|_{H^1_k(\Omega)}\lesssim\Bigl(kh+\frac{H}{\delta}(H+2^{-2\ell})\Bigr)\|v_h\|_{L^2(\Omega)}.
\]
Consider the test function $w_h\coloneqq v_h+2k^2z_0\in V_{h,0}$. On the one hand, a direct computation using the definition of $z\in H^2(\Omega)$ leads to
\begin{align*}
a(v_h,w_h)=a(v_h,v_h)+2k^2a(v_h,z)+2k^2a(v_h,z_0-z)=\|v_h\|_{H^1_k(\Omega)}^2+\mathrm{i}{k}\|v_h\|_{L^2(\partial\Omega)}^2+2k^2a(v_h,z_0-z).
\end{align*}
Consequently, taking the absolute value over the real part yields
\begin{align*}
\Re a(v_h,w_h)&\ge\|v_h\|_{H^1_k(\Omega)}^2-2k^2|a(v_h,z-z_0)|\gtrsim\|v_h\|_{H^1_k(\Omega)}^2-k^2\Bigl(kh+\frac{H}{\delta}(H+2^{-2\ell})\Bigr)\|v_h\|_{H^1_k(\Omega)}\|v_h\|_{L^2(\Omega)}\\
&\gtrsim\Bigl(1-k^2h-\frac{kH}{\delta}(H+2^{-2\ell})\Bigr)\|v_h\|_{H^1_k(\Omega)}^2.
\end{align*}
On the other hand, the norm of $w_h$ can be bounded by the triangle inequality,
\begin{align*}
\|w_h\|_{H^1_k(\Omega)}&\le\|v_h\|_{H^1_k(\Omega)}+2k^2\|z\|_{H^1_k(\Omega)}+2k^2\|z-z_0\|_{H^1_k(\Omega)}\\
&\lesssim\|v_h\|_{H^1_k(\Omega)}+k^2\Bigl(1+kh+\frac{H}{\delta}(H+2^{-2\ell})\Bigr)\|v_h\|_{L^2(\Omega)}\\
&\lesssim k\Bigl(1+kh+\frac{H}{\delta}(H+2^{-2\ell})\Bigr)\|v_h\|_{H^1_k(\Omega)},
\end{align*}
where the adjoint $H^1$-regularity \eqref{eq:uH1} for $z$ was used in the second line. Combining the two inequalities yields the desired inf-sup estimate under the stated assumptions.
\end{proof}
\begin{remark}\label{rmk:imp}
Lemma \ref{lema:last} explains why mIBC is needed in the construction of the coarse space $V_{h,0}$. With the original impedance boundary condition, the coefficient multiplying $\|f\|_{L^2(\Omega_i)}$ in \eqref{eq:lemalast1} would be only of $\mathcal{O}(H/k)$, which is insufficient to obtain the inf-sup condition in Lemma \ref{lem:infsup} since it means we need $k/\delta\cdot\mathcal{O}(H/k)\ll 1$. Alternatively, one would need to enrich the coarse space with additional volume-source basis functions functions to obtain the same order.
\end{remark}
Finally, we derive the contractivity of the operator $I-B_{\operatorname{hybrid}}A$.
\begin{theorem}\label{thm:wavelet-basedconv}
Assume that $k^2h\ll1$, $h\lesssim\min(2^{-\ell}H,k^2H^3)$ and $kH(H+2^{-2\ell})\ll\delta\le H$. Then
\[
\|(I-B_{\mathrm{hybrid}}A)v_h\|_{H^1_k(\Omega)}\lesssim\frac{2^{-\ell}H}{\delta}\sqrt{\frac{H}{h}}\|v_h\|_{H^1_k(\Omega)},\qquad\forall v_h\in V_h.
\]
\end{theorem}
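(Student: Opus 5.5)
We split the error transfer operator as $(I-R_0^TQ_0)$ applied to the one-level remainder and treat the two factors separately. Fix $v_h\in V_h$ and set $v_i:=v_h|_{\Omega_i}-Q_iv_h\in V_{h,i}^\partial$ and $r_h:=\sum_{i=1}^N\tilde{R}_i^Tv_i\in V_h^\partial$. By \eqref{eq:pou}, $(I-B_{\mathrm{hybrid}}A)v_h=(I-R_0^TQ_0)r_h$. The plan has three steps. First, bound $r_h$ by Lemma \ref{lema:last-app-harmonic} in terms of the local remainders. Second, control those remainders via Lemma \ref{lem:decomp}. Third, show that $I-R_0^TQ_0$ acts on $r_h$ essentially like a best-approximation error, using a Schatz (Aubin--Nitsche) argument built on Lemma \ref{lema:last}.

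\textbf{Step 1 (local remainders).} I would use the sharper form in the remark following Lemma \ref{lem:decomp}:
\[
\|v_i\|_{H^1_{1/H}(\Omega_i)}\lesssim\sqrt{H/h}\,\|v_h\|_{H^1_{1/H}(\Omega_i^h\setminus\Omega_i)}.
\]
In the layer $\Omega_i^h\setminus\Omega_i$, of width $h$, the $H^{-2}\|\cdot\|_{L^2}^2$ term should be handled by a trace-type bound,
\[
H^{-1}\|v_h\|_{L^2(\text{layer})}\lesssim H^{-1}\sqrt{h}\,\|v_h\|_{L^2(\partial)}\lesssim H^{-1}\sqrt{h}\,\bigl(H^{-1/2}\|v_h\|_{L^2}+\ldots\bigr).
\]
If this is too crude, I would fall back on \eqref{eq:vhpar} together with the condition $h\lesssim k^2H^3$. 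That condition gives $\sqrt{H/h}\gtrsim (kH)^{-1}$, so the factor $(kH)^{-1}+\sqrt{H/h}$ reduces to $\sqrt{H/h}$. Summing over $i$ with finite overlap then gives
\[
\Bigl(\sum_i\|v_i\|_{H^1_{1/H}(\Omega_i)}^2\Bigr)^{1/2}\lesssim\sqrt{H/h}\,\|v_h\|_{H^1_k(\Omega)}.
\]
This is where the hypothesis $h\lesssim k^2H^3$ is used.

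\textbf{Step 2 (coarse approximation).} Lemma \ref{lema:last-app-harmonic} now yields
\[
\inf_{w_0\in V_{h,0}}\|r_h-w_0\|_{H^1_k}\lesssim 2^{-\ell}\frac{H}{\delta}\sqrt{\frac{H}{h}}\,\|v_h\|_{H^1_k}.
\]
It therefore remains to show that $e:=(I-R_0^TQ_0)r_h$ satisfies $\|e\|_{H^1_k}\lesssim\inf_{w_0}\|r_h-w_0\|_{H^1_k}$, i.e.\ quasi-optimality of the coarse Galerkin projection.

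\textbf{Step 3 (Schatz argument, the main obstacle).} Galerkin orthogonality \eqref{eq:Q0} gives $a(e,w_0)=0$ for all $w_0\in V_{h,0}$. G\aa rding's inequality \eqref{eq:garding} and boundedness then give
\[
\|e\|_{H^1_k}^2\le \Re a(e,e)+2k^2\|e\|_{L^2}^2=\Re a(e,r_h-w_0)+2k^2\|e\|_{L^2}^2,
\]
so $\|e\|_{H^1_k}^2\lesssim\|e\|_{H^1_k}\|r_h-w_0\|_{H^1_k}+k^2\|e\|_{L^2}^2$.

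For the $L^2$ term, let $z$ solve the adjoint problem $a(w,z)=(w,e)$. Orthogonality and Lemma \ref{lema:last} give
\[
\|e\|_{L^2}^2=a(e,z-z_0)\lesssim\|e\|_{H^1_k}\Bigl(kh+\frac{H}{\delta}(H+2^{-2\ell})\Bigr)\|e\|_{L^2},
\]
hence
\[
k^2\|e\|_{L^2}^2\lesssim\Bigl(k^2h+\frac{kH}{\delta}(H+2^{-2\ell})\Bigr)^2\|e\|_{H^1_k}^2.
\]
The assumptions $k^2h\ll1$ and $kH(H+2^{-2\ell})\ll\delta$ make this coefficient small, so the term can be absorbed into the left-hand side. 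Well-posedness of $Q_0$ is supplied by Lemma \ref{lem:infsup}.

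The delicate points are, first, ensuring that the adjoint approximation constant is small with the correct power of $k$, and second, verifying the layer estimate in Step 1 without losing extra factors of $H/h$. If Step 1 loses more, I would accept $(kH)^{-1}+\sqrt{H/h}$ and absorb it using $h\lesssim k^2H^3$ as described.
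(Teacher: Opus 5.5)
Your proposal is correct and follows the paper's proof. The paper likewise bounds the local remainders through \eqref{eq:vhpar}, using $h\lesssim k^2H^3$ precisely to get $(kH)^{-1}\lesssim\sqrt{H/h}$; it then applies Lemma~\ref{lema:last-app-harmonic} and closes with the same G\aa rding plus Aubin--Nitsche (Schatz) absorption based on Lemma~\ref{lema:last}. Your tentative primary route via the remark after Lemma~\ref{lem:decomp} buys nothing: its $H^{-2}$-weighted layer norm cannot be bounded by $\|v_h\|_{H^1_k}$ without reintroducing the $(kH)^{-1}$ term, so the fallback you commit to is the right (and the paper's) choice.
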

\begin{proof}
Let $e_h:=(I-B_{\mathrm{hybrid}}A)v_h=(I-R_0^TQ_0)v_h^{\partial}$ with $v_h^\partial=\sum_{i=1}^N\tilde{R}_i^T(v_h|_{\Omega_i}-Q_iv_h)\in V_h^\partial$. Since $V_{h,0}\subset V_h$, the Galerkin orthogonality \eqref{eq:Q0} implies
\begin{equation}\label{eq:ehorth}
a(e_h,w_h)=a(v_h^\partial-Q_0v_h^\partial,w_h)=0,\qquad\forall w_h\in V_{h,0}.
\end{equation}

First, we estimate $\|e_h\|_{L^2(\Omega)}$ by an Aubin--Nitsche argument. Let $z\in H^1(\Omega)$ be the solution of the adjoint problem
\begin{equation}\label{eq:prop0}
a(w,z)=(w,e_h)_\Omega,\qquad\forall w\in H^1(\Omega).
\end{equation}
By Lemma \ref{lema:last}, there exists $z_0\in V_{h,0}$ with
\[
\|z-z_0\|_{H^1_k(\Omega)}\lesssim\Bigl(kh+\frac{H}{\delta}(H+2^{-2\ell})\Bigr)\|e_h\|_{L^2(\Omega)}.
\]
Choosing $w=e_h$ in \eqref{eq:prop0} and $w_h=z_0$ in \eqref{eq:ehorth}, we obtain
\begin{align*}
\|e_h\|_{L^2(\Omega)}^2&=a(e_h,z)=a(e_h,z-z_0)\lesssim\|e_h\|_{H^1_k(\Omega)}\|z-z_0\|_{H^1_k(\Omega)}\lesssim\Bigl(kh+\frac{H}{\delta}(H+2^{-2\ell})\Big)\|e_h\|_{H^1_k(\Omega)}\|e_h\|_{L^2(\Omega)}.
\end{align*}
This implies
\[
\|e_h\|_{L^2(\Omega)}
\lesssim\Bigl(kh+\frac{H}{\delta}(H+2^{-2\ell})\Big)\|e_h\|_{H^1_k(\Omega)}.
\]

G\aa rding's inequality in Lemma \ref{lema:sesquilinear} and the orthogonality \eqref{eq:ehorth} imply
\[
\|e_h\|_{H^1_k(\Omega)}^2=\Re a(e_h,v_h^{\partial}-v_{0})
+2k^2\|e_h\|_{L^2(\Omega)}^2, \qquad\forall v_0\in V_{h,0}.
\]
Absorbing the last term on the right with the above $L^2$-estimate using $k^2h\ll1$ and $kH(H+2^{-2\ell})\ll\delta$, we get
\begin{align*}
\|e_h\|_{H^1_k(\Omega)}\lesssim \Re a(e_h,v_h^{\partial}-v_0)\qquad\forall v_0\in V_{h,0}.
\end{align*}
Finally, the boundedness of the sesquilinear form $a(\cdot,\cdot)$ yields
\begin{align*}
\|e_h\|_{H^1_k(\Omega)}^2&\lesssim \inf_{v_0\in V_{h,0}}\|v_h^{\partial}-v_0\|_{H^1_k(\Omega)}^2\lesssim\Bigl(2^{-\ell}\frac{H}{\delta}\Bigr)^2\sum_{i=1}^N\|v_h-Q_iv_h\|_{H^1_{1/H}(\Omega_i)}^2\lesssim\frac{2^{-2\ell}H^3}{\delta^2h}\|v_h\|_{H^1_k(\Omega)}^2,
\end{align*}
where the approximation property \eqref{eq:waveletErrconv1} with $v_h-Q_iv_h\in V_{h,i}^\partial$ and the estimate \eqref{eq:vhpar} were used in the last two steps.
\end{proof}
A direct application of Theorem \ref{thm:wavelet-basedconv} reveals the following result.
\begin{corollary}\label{coro:main}
Assume $h\simeq k^{-2-\alpha}$, $H\simeq k^{-1-\beta}$, $\delta\simeq k^{-1-\gamma}$, $2^{-\ell}\simeq k^{-1/2-\zeta}$ with small real numbers $\alpha,\beta,\gamma,\zeta\in(0,1/4]$ satisfying $2\beta>\gamma\ge\beta$, $\beta+2\zeta>\gamma$ and $3\beta+2\zeta>2\gamma+\alpha$. Then $\|I-B_{\operatorname{hybrid}}A\|<1$ for sufficiently large $k\ge k_0$.
\end{corollary}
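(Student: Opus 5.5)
The plan is to plug the power-law scalings into Theorem \ref{thm:wavelet-basedconv}: first check each of its hypotheses for large $k$, then show the contraction factor $\frac{2^{-\ell}H}{\delta}\sqrt{H/h}$ is a negative power of $k$. Throughout, $a\ll b$ will be read as $a/b=\mathcal O(k^{-\epsilon})$ for some $\epsilon>0$, which is below any fixed threshold once $k\ge k_0$ is large.

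The hypotheses are checked one at a time.
\begin{itemize}
\item $k^2h\simeq k^{-\alpha}\to0$, since $\alpha>0$.
\item For $h\lesssim 2^{-\ell}H$, we have $2^{-\ell}H\simeq k^{-3/2-\beta-\zeta}$. Because $\beta,\zeta\le 1/4$, the exponent satisfies $3/2+\beta+\zeta\le 2<2+\alpha$, so $h$ is smaller.
\item For $h\lesssim k^2H^3$, we have $k^2H^3\simeq k^{-1-3\beta}$, and $1+3\beta\le 7/4<2+\alpha$.
\item $\delta\le H$ amounts to $\gamma\ge\beta$, which is assumed (up to constants; if needed take the constant in $\delta$ small).
\item For $kH\cdot H\ll\delta$: $kH^2\simeq k^{-1-2\beta}$, and its ratio to $\delta$ is $k^{\gamma-2\beta}\to0$ because $2\beta>\gamma$.
\item For $kH\,2^{-2\ell}\ll\delta$: $kH2^{-2\ell}\simeq k^{-1-\beta-2\zeta}$, and its ratio to $\delta$ is $k^{\gamma-\beta-2\zeta}\to0$ because $\beta+2\zeta>\gamma$.
\end{itemize}

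For the contraction factor, $H/\delta\simeq k^{\gamma-\beta}$, $2^{-\ell}\simeq k^{-1/2-\zeta}$, and $H/h\simeq k^{1+\alpha-\beta}$. Multiplying gives the exponent
\[
\gamma-\beta-\tfrac12-\zeta+\tfrac12(1+\alpha-\beta)=\tfrac12\bigl(2\gamma+\alpha-3\beta-2\zeta\bigr),
\]
which is negative precisely under the assumption $3\beta+2\zeta>2\gamma+\alpha$. Hence $\|I-B_{\mathrm{hybrid}}A\|\le Ck^{-\epsilon}$ with $\epsilon>0$, and this is less than $1$ for $k$ sufficiently large.

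The main obstacle is interpretive rather than technical: the theorem's hidden constants must be independent of $k$ once its smallness hypotheses hold. Since each ratio above decays polynomially in $k$, a single threshold $k_0$ works. The only real computation is the exponent bookkeeping, which needs the restriction $\alpha,\beta,\gamma,\zeta\le1/4$ to ensure $h\lesssim 2^{-\ell}H$ and $h\lesssim k^2H^3$.
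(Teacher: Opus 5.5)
Your proposal is correct and follows essentially the same route as the paper's proof. The paper likewise translates the hypotheses of Theorem~\ref{thm:wavelet-basedconv} into the exponent conditions $2\beta>\gamma\ge\beta$, $\beta+2\zeta>\gamma$, $\beta+\zeta\le\alpha+1/2$ and $3\beta\le\alpha+1$ (the last two guaranteed by the bound $1/4$), and then obtains the same contraction exponent $\gamma-\zeta+(\alpha-3\beta)/2$, which is negative exactly when $3\beta+2\zeta>2\gamma+\alpha$.
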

\begin{proof}
The assumptions of Theorem \ref{thm:wavelet-basedconv} translate into $2\beta>\gamma\ge\beta$, $\beta+2\zeta>\gamma$, $\beta+\zeta\le\alpha+1/2$, $3\beta\le\alpha+1$, and
\[
\|(I-B_{\mathrm{hybrid}}A)v_h\|_{H^1_k(\Omega)}\lesssim k^{\gamma-\zeta+(\alpha-3\beta)/2}\|v_h\|_{H^1_k(\Omega)},\qquad\forall v_h\in V_h.
\]
Hence the error-transfer operator $I-B_{\mathrm{hybrid}}A$ is contractive for large enough $k$ whenever $3\beta+2\zeta>2\gamma+\alpha$.
\end{proof}

\begin{remark}
Given a wavenumber $k$, take $h\approx k^{-2-\varepsilon}$ and the minimal overlap $\delta\approx H$. Then the conditions in Corollary \ref{coro:main} require $H\approx k^{-1-\varepsilon}$ and $\ell\approx(1/2+\varepsilon)\log_2k$. This shows that the resolution scale $2^{-\ell}H$ resolved by the edge coarse basis can be chosen comparable to the data-accuracy fine-grid scale of \cite{wu2014pre}, namely $2^{-\ell}H\ll k^{-3/2}$. Compared with the previous analysis in \cite{FuGongLiWang:2024}, which required at least $\ell>2\log_2k$ levels, the resolution conditions on the coarse mesh size $H$ and the level parameter $\ell$ required here are very mild.

If the minimal overlap is reduced to $\delta\approx k^{-5/4-\varepsilon}$, then the conditions in Corollary \ref{coro:main} require $H\approx k^{-9/8-\varepsilon}$ and $\ell\approx(9/16+\varepsilon)\log_2k$. This means that decreasing the overlap $\delta$ gives smaller local problems, but requires a finer $H$ and a larger $\ell$.
\end{remark}

\section{Numerical experiments}\label{sec:numer}
We examine the convergence of REMs-HS as the wavenumber $k$ increases, beginning with a parameter family satisfying the scaling of Corollary~\ref{coro:main}. We then consider the less restrictive relation $h=k^{-3/2}$ and the more challenging regime of fixed points per wavelength. This progression distinguishes the effect of frequency-dependent trace enrichment from that of outer Krylov acceleration. The three-dimensional experiments extend the comparison to systems with $135,005,697$ degrees of freedom \footnote{Codes are available at \url{https://github.com/aggietx/helmholtz-ibc-reproducibility}}.

\subsection{Discrete setting and iteration counts}
We first take $\Omega=(0,1)^2$, $g=0$, and
\[
 f_h(x)=h^{-2}\exp\!\left(-\frac{|x-x_s|^2}{h^2}\right),
 \qquad x_s=(1/2,1/2).
\]
We use conforming bilinear finite elements on a uniform Cartesian mesh and Gauss quadrature for the element and edge integrals. A uniform coarse partition induces the overlapping subdomains, with $\delta=H/2$ in the two frequency-dependent parameter families and $\delta=H$ in the fixed-ppw experiments. The same subdomains are used for the local corrections and the coarse-basis construction, together with a normalized cubic partition of unity.

Starting from zero, we compare right-preconditioned FGMRES(20) with the unaccelerated iteration $ u_h^{(n+1)}=u_h^{(n)}+B_{\mathrm{hybrid}}(F_h-Au_h^{(n)})$. We report the number $N_{\rm pc}$ of preconditioner applications needed to satisfy $\|Au_h^{(n)}-F_h\|_2\le10^{-6}\|F_h\|_2$. The residual is evaluated directly. In the tables, ``$>200$'' means that the tolerance was not reached within 200 applications, and ``D'' denotes divergence.

\subsection{Asymptotic regime of Corollary \ref{coro:main}}
\label{sec:asymptotic-family}
We choose $\alpha=\beta=\gamma=\zeta=\varepsilon=0.01$ in Corollary~\ref{coro:main}, giving $h\simeq k^{-2-\varepsilon},
 H\simeq\delta\simeq k^{-1-\varepsilon},
 2^{-\ell}\simeq k^{-1/2-\varepsilon}$. For $1/h\in\{64,128,256,512\}$, we set $k=(1/h)^{1/(2+\varepsilon)},
 \ell=\left\lceil(1/2+\varepsilon)\log_2 k\right\rceil$, and choose the coarse partition by $H=2^{\lceil\log_2(h^{-1}k^{-1-\varepsilon})\rceil}h$. The power-of-two rounding preserves the prescribed scaling, while the fine-grid resolution $\mathrm{ppw}:=2\pi/(kh)=2\pi k^{1+\varepsilon}$ increases with the wavenumber.

\begin{table}[htbp]
\centering
\caption{REMs-HS preconditioner application counts for the parameter family of Corollary~\ref{coro:main}, with $\varepsilon=0.01$.}
\label{tab:asymptotic-family}
\setlength{\tabcolsep}{5pt}
\small
\begin{tabular}{c c c c c c c}
\hline
\multirow{2}{*}{$1/h$} & \multirow{2}{*}{$k$} & \multirow{2}{*}{ppw} & \multirow{2}{*}{$H/h$} & \multirow{2}{*}{$\ell$} & \multicolumn{2}{c}{$N_{\rm pc}$} \\
\cline{6-7}
 &  &  &  &  & FGMRES(20) & Unaccelerated \\
\hline
64 & 7.92 & 50.79 & 8 & 2 & 3 & 3 \\
128 & 11.18 & 71.95 & 16 & 2 & 3 & 3 \\
256 & 15.78 & 101.93 & 16 & 3 & 3 & 3 \\
512 & 22.28 & 144.40 & 32 & 3 & 3 & 3 \\
\hline
\end{tabular}
\end{table}

Both unaccelerated REMs-HS and FGMRES(20) converge in three preconditioner applications on every mesh in Table~\ref{tab:asymptotic-family}. The constant counts are consistent with the uniform contraction predicted by Corollary~\ref{coro:main}. In this regime, the local and coarse corrections already provide rapid convergence, and outer Krylov acceleration does not reduce the application count.

\subsection{Frequency-dependent resolution with $h=k^{-3/2}$}
\label{sec:hthreehalves}
To assess convergence at a less restrictive fine-grid scale, we next take $h=k^{-3/2}$ and $\ell=\lceil(1/2+\varepsilon)\log_2 k\rceil$ with $\varepsilon=0.01$. The meshes $1/h\in\{64,128,256,512\}$ then give $k=(1/h)^{2/3}$ and $\mathrm{ppw}=2\pi\sqrt{k}$. We choose the coarse partition by $H=2^{\ell}h$. This sequence retains frequency-dependent trace enrichment while resolving each wavelength with fewer fine elements than the preceding family.

\begin{table}[htbp]
\centering
\caption{REMs-HS preconditioner application counts for $h=k^{-3/2}$ and $\varepsilon=0.01$.}
\label{tab:hthreehalves}
\setlength{\tabcolsep}{5pt}
\small
\begin{tabular}{c c c c c c c}
\hline
\multirow{2}{*}{$1/h$} & \multirow{2}{*}{$k$} & \multirow{2}{*}{ppw} & \multirow{2}{*}{$H/h$} & \multirow{2}{*}{$\ell$} & \multicolumn{2}{c}{$N_{\rm pc}$} \\
\cline{6-7}
 &  &  &  &  & FGMRES(20) & Unaccelerated \\
\hline
64 & 16.00 & 25.13 & 8 & 3 & 2 & 2 \\
128 & 25.40 & 31.67 & 8 & 3 & 2 & 2 \\
256 & 40.32 & 39.90 & 8 & 3 & 2 & 2 \\
512 & 64.00 & 50.27 & 16 & 4 & 2 & 2 \\
\hline
\end{tabular}
\end{table}

Table~\ref{tab:hthreehalves} shows convergence in two preconditioner applications for both iterations throughout the sequence. Thus, with the selected frequency-dependent coarse spaces, rapid convergence persists as $k$ increases from $16$ to $64$ under the weaker relation $h=k^{-3/2}$. The identical counts again show that the unaccelerated Schwarz iteration itself provides the observed reduction.

\subsection{Two-dimensional fixed-ppw regime}

The preceding sequence increases both the points per wavelength and the
trace resolution with $k$.  To assess the method when these quantities remain fixed, we next impose $\mathrm{ppw}:=2\pi/(kh)=20$ and $H=\delta=16h$. Thus $kH=16(2\pi/20)\approx5.03$ is fixed, while the number of wavelengths in
the domain increases under refinement.  The wave resolution and the number
of fine cells in each local problem remain fixed, so the comparison between
$\ell=0$ and $\ell=1$ isolates the influence of the trace resolution at this
order-one value of $kH$.

\begin{table}[htbp]
\centering
\caption{REMs-HS preconditioner application counts in the two-dimensional fixed-ppw regime, with $\delta=H$.}
\label{tab:fixed-ppw}
\setlength{\tabcolsep}{5pt}
\small
\begin{tabular}{c c c c c c}
\hline
\multirow{2}{*}{$1/h$} & \multirow{2}{*}{$k$} & \multicolumn{2}{c}{$\ell=0$} & \multicolumn{2}{c}{$\ell=1$} \\
\cline{3-4}\cline{5-6}
 &  & FGMRES(20) & Unaccelerated & FGMRES(20) & Unaccelerated \\
\hline
64 & 20.11 & 6 & 11 & 4 & 5 \\
128 & 40.21 & 13 & 82 & 5 & 5 \\
256 & 80.42 & 140 & D & 5 & 5 \\
512 & 160.85 & $>200$ & D & 5 & 6 \\
\hline
\end{tabular}
\end{table}

Table~\ref{tab:fixed-ppw} shows that the coarse space $V_{h,0}$ with $\ell=0$ does not control iteration growth at $kH\approx5.03$ that unaccelerated iteration diverges on the two finest meshes, and even FGMRES(20) fails to reach the tolerance within $200$ applications when $1/h=512$.  Increasing the level to $\ell=1$ reduces the unaccelerated counts to five or six throughout the sequence, while FGMRES(20) requires four or five.  The additional trace resolution therefore stabilizes the Schwarz iteration itself. With this coarse space, Krylov acceleration provides only a modest further reduction.

\subsection{Three-dimensional fixed-ppw regime}
Lastly, we take $\Omega=(0,1)^3$, use a discrete point source centered at $(1/2,1/2,1/2)$, and discretize by conforming
trilinear finite elements with Gauss quadrature.  We set $\mathrm{ppw}=20,H=\delta=8h$ and $\ell=0$.

\begin{table}[htbp]
\centering
\caption{REMs-HS preconditioner application counts in the three-dimensional fixed-ppw regime, with $\delta=H$.}
\label{tab:fixed-ppw-3d}
\setlength{\tabcolsep}{5pt}
\small
\begin{tabular}{c c c c c}
\hline
\multirow{2}{*}{$1/h$} & \multirow{2}{*}{$k$} & \multirow{2}{*}{$\dim V_h$} & \multicolumn{2}{c}{$N_{\rm pc}$} \\
\cline{4-5}
 &  &  & FGMRES(20) & Unaccelerated \\
\hline
64 & 20.11 & 274,625 & 4 & 4 \\
128 & 40.21 & 2,146,689 & 4 & 4 \\
256 & 80.42 & 16,974,593 & 5 & 5 \\
512 & 160.85 & 135,005,697 & 5 & 5 \\
\hline
\end{tabular}
\end{table}

Table~\ref{tab:fixed-ppw-3d} shows that both unaccelerated iteration and
FGMRES(20) require four applications on the first two meshes and five on
the remaining two, although the wavenumber increases eightfold and the
number of subdomains grows from $8^3$ to $64^3$.  Thus, at
$kH\approx2.51$, the level-zero space sustains rapid convergence up to
$135,005,697$ fine-grid degrees of freedom without relying on Krylov
acceleration.  In these tests, FGMRES(20) does not reduce the number of
preconditioner applications needed to reach the prescribed tolerance.

\section{Conclusion}\label{sec:conclusion}
 We have designed a novel hybrid Schwarz preconditioner for Helmholtz problems with large wavenumbers, based on an edge multiscale (EMs) coarse space and a one-level RAS-imp preconditioner. The central ingredient is a modified impedance boundary condition (mIBC) imposed both on the subdomain solves and on the local problems generating the EMs coarse space. In contrast to constructions based on the original impedance boundary condition, where the auxiliary problems are indefinite, typically forcing an additional bubble-type enrichment of the coarse space, the mIBC makes every auxiliary local and coarse problem coercive. The proposed method is therefore both stable and cheap. We have rigorously proved the contraction of the associated error-transfer operator under mild conditions on the coarse mesh with $kH\ll 1$ and a markedly relaxed condition on the hierarchical level $\ell>\frac{1}{2}\log_2k$, compared with earlier analyses requiring $\ell>2\log_2 k$. The performance of the method is demonstrated by extensive numerical tests.

\bibliographystyle{abbrv}
\bibliography{reference}
\end{document}